\newlength{\defbaselineskip}
\theoremstyle{plain}
\newtheorem{defin}{Definition}[section]
\newtheorem{theorem}[defin]{Theorem}
\newtheorem{lemma}[defin]{Lemma}
\newtheorem{remark}[defin]{Remark}
\numberwithin{equation}{section}
\def\dis{\displaystyle}
\def\rn{\mathbb{R}^N}
\def\RR{\mathbb{R}}
\def\NN{\mathbb{N}}
\def\io{\int_\Omega}
\def\supp{\text{\text{supp}}}
\def\sob{W^{1,p}_{0}(\Omega)}
\newcommand{\h}{H^{^*}}
\author[L. M. De Cave]{Linda Maria De Cave}
\author[R. Durastanti]{Riccardo Durastanti}
\author[F. Oliva]{Francescantonio Oliva}
\address[L. M. De Cave]{Institut f\"ur Mathematik, Universit\"at Z\"urich, Winterthurerstrasse 190, 8057 Z\"urich, Switzerland
\\linda.decave@math.uzh.ch}
\address[R. Durastanti]{Dipartimento di Scienze di Base e Applicate per l' Ingegneria, ``Sapienza" Universit\`a di Roma, Via Scarpa 16, 00161 Roma, Italy 
\\ riccardo.durastanti@sbai.uniroma1.it}
\address[F. Oliva]{Dipartimento di Scienze di Base e Applicate per l' Ingegneria, ``Sapienza" Universit\`a di Roma, Via Scarpa 16, 00161 Roma, Italy 
\\ francesco.oliva@sbai.uniroma1.it}
\keywords{Nonlinear elliptic equations, Singular elliptic equations, Measure data} \subjclass[2010]{35J60, 35J61, 35J75, 35R06}
\begin{document}

\title[Possibly singular nonlinear elliptic equation]{Existence and uniqueness results for possibly singular nonlinear elliptic equations with measure data}

\maketitle 

\begin{abstract}
We study existence and uniqueness of solutions to a nonlinear elliptic boundary value problem with a general, and possibly singular, lower order term, whose model is
$$\begin{cases}
\dis -\Delta_p u  = H(u)\mu & \text{in}\ \Omega,\\
u>0 &\text{in}\ \Omega,\\
u=0 &\text{on}\ \partial\Omega.
\end{cases}$$
Here $\Omega$ is an open bounded subset of $\rn$ ($N\ge2$), $\Delta_p u:= \operatorname{div}(|\nabla u|^{p-2}\nabla u)$ ($1<p<N$) is the $p$-laplacian operator, $\mu$ is a nonnegative bounded Radon measure on $\Omega$ and $H(s)$ is a continuous, positive and finite function outside the origin which grows at most as $s^{-\gamma}$, with $\gamma\ge0$, near zero. 
\vskip 0.5\baselineskip
\end{abstract}

\tableofcontents

\section{Introduction}

We start recalling some literature concerning singular elliptic problems whose simplest model is given by
\begin{equation}
\begin{cases}
\displaystyle -\Delta u = \frac{\mu}{u^\gamma} &  \text{in}\, \Omega, \\
u>0 & \text{in}\, \Omega,\\
u=0 & \text{on}\ \partial \Omega,
\label{pbintro}
\end{cases}
\end{equation}
where $\Omega$ is an open bounded subset of $\mathbb{R}^N$ ($N \ge 2$), $\mu$ is a nonnegative datum and $\gamma> 0$. 
\\ \\ The pioneering papers concerning problem \eqref{pbintro} are \cite{crt}, \cite{lm} and  \cite{s}. 
\\In these works the authors consider the case of a smooth datum $\mu$, proving the existence of a unique classical solution $u\in C^2(\Omega)\cap C(\overline{\Omega})$ to \eqref{pbintro}. This solution does not belong to $C^2(\overline{\Omega})$ and, in \cite{lm}, it is proved that $u\in H^1_0(\Omega)$ if and only if  $\gamma<3$ and that, if $\gamma > 1$, the solution does not belong to $C^1(\overline{\Omega})$. For further informations on the H\"older continuity properties of the solution to \eqref{pbintro} see \cite{ghl}. 
\\ \\As concerns data $\mu$ merely in $L^1(\Omega)$, we mainly refer to \cite{bo}, where the authors prove the existence of a distributional solution to the problem working by approximation, desingularizing the right hand side of the equation. This solution belongs to $H^1_0(\Omega)$ if $\gamma= 1$, it is only in $H^1_{loc}(\Omega)$ if  $\gamma> 1$ and, finally, if $\gamma< 1$, it belongs to an homogeneous Sobolev space larger than $H^1_0(\Omega)$. In the case of measure data, we refer to \cite{do}, where the existence of a distributional solution is proved in the more general case of a quasilinear elliptic operator with quadratic coercivity and of a singular lower order term not necessarily non-increasing.  
\\ \\ As one can expect, uniqueness of solutions to \eqref{pbintro} is a challenging issue. 
\\If a solution to \eqref{pbintro} belongs to $H^1_0(\Omega)$, uniqueness holds (see \cite{boca}). In \cite{sz}, one can find a necessary and sufficient condition in order to have $H^1_0(\Omega)$ solutions to \eqref{pbintro} if $\gamma>1$ and $\mu\in L^1(\Omega)$ positive. If $\mu$ is a nonnegative function in $L^{\frac{2N}{N+2}}(\Omega)$ and the singular term is non-increasing, the solution to \eqref{pbintro}, defined through a transposition argument, is proved to be unique even if it belongs only to $H^1_{loc}(\Omega)$ (see \cite{gmm,gmm2}). If $\gamma>1$ and the datum is a diffuse measure, in \cite{po} the authors prove a uniqueness result. Finally, if $\Omega$ has a sufficiently regular boundary, uniqueness of solutions belonging only to $W^{1,1}_{loc}(\Omega)$ is proved by means of a suitable Kato's type argument when $\mu$ is a general measure and $H$ is a general non-increasing nonlinearity (see \cite{op2}).
\\ \\ Here we will study the following problem with a nonlinear principal operator
\begin{equation}
\begin{cases}
\displaystyle -\Delta_p u = H(u)\mu &  \text{in}\, \Omega, \\
u>0 & \text{in}\, \Omega,\\
u=0 & \text{on}\ \partial \Omega,
\label{pbintro2}
\end{cases}
\end{equation}
where, for $1<p<N$, $\Delta_p u:= \operatorname{div}(|\nabla u|^{p-2}\nabla u)$ is the $p$-laplacian operator, $\mu$ is a nonnegative bounded Radon measure on $\Omega$ and $H(s)$ is a nonnegative, continuous and finite function outside the origin, which, roughly speaking, behaves as $s^{-\gamma}$ ($\gamma\ge 0$) near zero. 
\\ \\In presence of a nonlinear principal operator the literature is more limited. We refer to \cite{dc} for the existence of a distributional solution when $H(s)=s^{-\gamma}$ and $\mu\in L^1(\Omega)$ while, in case of a general singular nonlinearity $H$ and $\mu\in L^{(p^*)'}(\Omega)$, we mention \cite{dcgop}. Furthermore, in \cite{cst}, the uniqueness of solutions which belong to $W^{1,p}_{loc}(\Omega)$ is proved if $\mu\in L^1(\Omega)$. This uniqueness result holds true in full generality in case of a star-shaped domain, while some more regularity on  f is needed if $\gamma>1$, $1<p\leq N$ and the domain is more general. Besides uniqueness of solutions belonging to $W^{1,p}_0(\Omega)$, which continues to hold even in presence of a nonlinear operator, many of the techniques used to prove uniqueness in the linear case $p=2$ can not be extended to the general case $p>1$. 
\\ \\We stress that uniqueness for solutions to \eqref{pbintro2} is an hard issue even if $H\equiv 1$. Indeed, in general, having a distributional solutions is not sufficient to deduce uniqueness which holds in the framework of the so-called \textit{renormalized solution} (see Definition \ref{renormalized} below, given in the case of a general $H$). The notion of renormalized solution formally selects a particular solution among the distributional ones. We also highlight that the existence of a renormalized solution for a continuous and finite function $H$ is given in \cite{mupo} when $p=2$; this solution is also unique if $H$ is non-increasing and $\mu$ is diffuse with respect to the harmonic capacity (see Section \ref{sec:not} below). We refer the interested reader to \cite{dmop} for a complete account on the renormalized framework for problems whose model is given by \eqref{pbintro2} with $H\equiv1$ and the positivity requirement on $u$ is removed ($\mu$ is not necessarily nonnegative).      
\\ \\Without the aim to be complete, we refer to various works treating different aspects of problems as in \eqref{pbintro} and in \eqref{pbintro2}. The literature concerning the case of linear operators is \cite{af,af2,am,bgh,c,cd,car,cc,cc2,diaz,edr}. For more general operators we refer to \cite{dc,do,gcs,kl,op}. Finally, also symmetry of solutions is considered in  \cite{cgs,cms,tromb}.
\\ \\ Here we show the existence of a distributional solution $u$ to \eqref{pbintro2} despite a nonlinear operator, a measure as datum and a general lower order term. 
\\The most interesting fact is that $u$ turns out also to be a renormalized solution to the singular problem if $\gamma\le 1$. This is strictly related to the fact that, in this case, the truncations of $u$ at any level $k$, $T_k(u)$, belong to the space of finite energy, differently to the case $\gamma>1$, where $T_k(u)$ is, in general, only in $W^{1,p}_{loc}(\Omega)$. 
\\As already stressed, the existence of a renormalized solution is linked to the uniqueness of the solution to \eqref{pbintro2}. Indeed, in case of a diffuse measure datum and of a non-increasing $H$, without requiring any additional assumption on $\Omega$ and on $\mu$, we are able to prove that the renormalized solution is unique even in presence of a principal operator which can be way more general than the $p$-laplacian. 
\\It is worth noting that, at the best of our knowledge, our result is new even in case of a continuous and finite nonlinearity $H$ (i.e., if $\gamma=0$), so that we are also providing an extension of the results of \cite{mupo} to the case $p\neq2$.
\\ \\We give a brief plan of the paper. Section \ref{sec:not} is devoted to present the preliminary results and the notations used throughout the paper. In Section \ref{sec:set} we provide the assumptions, the notions of solutions we are adopting and the statements of the existence and uniqueness theorems. In Section \ref{exgamma=0} we prove the existence theorem when $H$ is finite. In Section \ref{sec:maggiore0} we provide the approximation scheme and the main tools in preparation of the proof of the theorems when $H$ can blow up at the origin. In Section \ref{mainresults} we apply all tools of the previous section to deduce the existence and uniqueness theorems in their full generality. Finally, in Section \ref{H=0}, we provide some further results concerning the regularity of a solution to \eqref{pbintro2} when $H(s)$ can degenerate (i.e., becomes zero) at some point $s>0$.  

\section{Notations and preliminaries}\label{sec:not}

We denote by $C_b(\RR)$ the space of continuous and bounded functions on $\RR$ and by $C_c(\Omega)$ the space of continuous functions with compact support in $\Omega$; the latter one will be an open bounded subset of $\RR^N$ ($N\ge 2$) in the entire paper.
If no otherwise specified, we will denote by $C$ several constants whose value may change from line to line and, sometimes, on the same line. These values will only depend on the data (for instance $C$ may depend on $\Omega$, $\gamma$, $N$) but they will never depend on the indexes of the sequences we will introduce. Moreover, in order to take into account the order of the limits, we will denote by $\epsilon(n,r,\nu)$ any quantity such that
$$\limsup_{\nu\rightarrow0}\,\limsup_{r\rightarrow\infty}\,\limsup_{n\rightarrow\infty}\epsilon(n,r,\nu)=0.$$
For a fixed $k>0$, we introduce the truncation functions $T_{k}$ and $G_{k}$
$$T_k(s)=\max (-k,\min (s,k)), $$
$$G_k(s)=(|s|-k)^+ \operatorname{sign}(s),$$
and we also define the functions $\pi_k:\RR\rightarrow\RR$ and $\theta_k:\RR\rightarrow\RR$
\begin{equation}\label{pi}\pi_k(s)=\frac{T_k(s-T_k(s))}{k},\end{equation}
\begin{equation}\label{teta}\theta_k(s)=1-|\pi_k(s)|.\end{equation}
From now onwards, when employing functions denoted by $\pi_k$ or $\theta_k$, we will mean the previous functions.
\\Let $f\in L^1_{loc}(\Omega)$ then $x\in\Omega$ is a Lebesgue point of $f$ if there exists $\widehat{f}(x)\in\RR$ such that 
$$\lim_{\rho\rightarrow0}\frac{1}{|B_{\rho}(x)|}\int_{B_{\rho}(x)}\big|f-\widehat{f}(x)\big|=0.$$ 
By the Lebesgue differentiation Theorem, almost every point $x\in\Omega$ is a Lebesgue point of $f$ and $f(x)=\widehat{f}(x)$. We denote as $\mathcal{L}_{f}$ the set of Lebesgue points of a function $f\in L^1_{loc}(\Omega)$. 
\\ The standard $p$-capacity of a Borel set $E\subset\Omega$ is defined by
$$\operatorname{cap}(E,\Omega)= \inf \int_{\Omega}|\nabla u|^p \ \ \text{with $u \in W^{1,p}_0(\Omega)$ : $u \ge 1$\;\text{a.e. in a neighborhood of}\;E}.$$
A function $u$ is said to be cap$_{p}$-quasi continuous if for every $\epsilon>0$ there exists an open set $E\subset\Omega$ such that $\operatorname{cap}(E)<\epsilon$ and $\left.u\right|_{\Omega\setminus E}$ is continuous in $\Omega\setminus E$.
\\Moreover for every $u \in W^{1,p}(\Omega)$ there exists a cap$_p$-quasi continuous representative $\tilde{u}$ yielding $u=\tilde{u}$ almost everywhere in $\Omega$ and if $\widehat{u}$ is another cap$_p$-quasi continuous representative of $u$, then $\widehat{u}=\tilde{u}$ cap$_p$ almost everywhere in $\Omega$. We will always refer to the cap$_p$-quasi continuous representative when dealing with functions in $W^{1,p}(\Omega)$. 
\\We denote the space of bounded Radon measures by $\mathcal{M}(\Omega)$. Let us a recall that $\mu\in\mathcal{M}(\Omega)$ is said to be {\itshape diffuse} with respect to the $p$-capacity if for every Borel set $B\subset \Omega$ such that $\operatorname{cap}_p(B)=0$ it results $\mu(B)=0$. Moreover $\mu$ is said to be {\itshape concentrated} on a Borel set $B\subset \Omega$ if $\mu(E)=\mu(E\cap B)$ for every $E\subset \Omega$. 
\\ It follows from \cite{fuku} that every $\mu\in \mathcal{M}(\Omega)$ can be uniquely decomposed as
$$\mu=\mu_d+\mu_c,$$
where $\mu_d$ is diffuse and $\mu_c$ is concentrated on a set of zero $p$-capacity and that, if $\mu \ge 0$, then $\mu_d,\mu_c\ge 0$.
\\ Furthermore, in \cite{bgo}, is proved the following decomposition result
\begin{equation*}
\mu\in \mathcal{M}(\Omega) \text{   is diffuse if and only if }\mu=f-\operatorname{div}(F)\;\;\text{with}\;f\in L^1(\Omega),F\in L^{p'}(\Omega)^N.
\label{dec}
\end{equation*}
The latter decomposition is not unique since $L^1(\Omega)\cap W^{-1,p'}(\Omega)\neq\{0\}$. We recall that a sequence of measures $\mu_n$ converges to $\mu$ in the narrow topology of $\mathcal{M}(\Omega)$ if 
$$\lim_{n\to\infty} \io \varphi d\mu_n=\io \varphi d\mu  \quad \forall \varphi \in C_b(\Omega).$$
\\Here we collect some results contained in \cite{b6} and \cite{dmop}.
\begin{lemma}\label{dalmaso}
Let $\lambda\in\mathcal{M}(\Omega)$ be nonnegative and concentrated on a set $E$ such that $\operatorname{cap}_p(E)=0$. Then, for every $\nu>0$, there exists a compact subset $K_{\nu}\subset E$ and a function $\Psi_{\nu}\in C^{\infty}_c(\Omega)$ such that the following hold
$$\lambda(E\setminus K_{\nu})<\nu,\;\;0\le\Psi_{\nu}\le1\;\text{in}\;\Omega,\;\;\Psi_{\nu}\equiv1\;\text{in}\;K_{\nu},\,\;\lim_{\nu\rightarrow0}\|\Psi_{\nu}\|_{W^{1,p}_0(\Omega)}=0.$$
\end{lemma}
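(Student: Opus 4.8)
The plan is to treat the two conclusions separately: the compact set $K_\nu$ is produced from the inner regularity of the measure $\lambda$, whereas the cut-off $\Psi_\nu$ is built from the hypothesis $\operatorname{cap}_p(E)=0$ by truncating and then regularizing a function with small $p$-energy.

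\textbf{Construction of $K_\nu$.} Since $\lambda$ is a nonnegative bounded Radon measure it is finite and inner regular on Borel sets; as it is concentrated on $E$ one has $\lambda(E)=\lambda(\Omega)<\infty$, so for every $\nu>0$ inner regularity yields a compact $K_\nu\subset E$ with $\lambda(E)-\lambda(K_\nu)<\nu$, that is $\lambda(E\setminus K_\nu)<\nu$. The key remark is that, by monotonicity of the capacity, the inclusion $K_\nu\subset E$ forces $\operatorname{cap}_p(K_\nu)\le\operatorname{cap}_p(E)=0$, so each of these compact sets still has zero $p$-capacity.

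\textbf{Construction of $\Psi_\nu$.} From $\operatorname{cap}_p(K_\nu)=0$ and the definition of capacity recalled above, I would pick $u_\nu\in W^{1,p}_0(\Omega)$ with $u_\nu\ge1$ a.e.\ in an open neighborhood $\omega_\nu$ of $K_\nu$ and $\int_\Omega|\nabla u_\nu|^p<\nu$. Truncating, the function $T_1(u_\nu^+)$ satisfies $0\le T_1(u_\nu^+)\le1$, equals $1$ a.e.\ on $\omega_\nu$, and has gradient energy no larger than that of $u_\nu$; moreover, by the Poincaré inequality on the bounded set $\Omega$, it is small in $L^p$ as well. To obtain a compactly supported smooth representative I multiply it by a fixed cut-off $\zeta\in C^\infty_c(\Omega)$ with $0\le\zeta\le1$ and $\zeta\equiv1$ on a neighborhood of $K_\nu$, and then mollify with a sufficiently small radius. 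Using the product rule together with the $L^p$-smallness just mentioned keeps the energy of order $\nu$, multiplication by $\zeta$ confines the support to a compact subset of $\Omega$, and the small mollification radius preserves both the bounds $0\le\Psi_\nu\le1$ and the identity $\Psi_\nu\equiv1$ on $K_\nu$. Finally, Poincaré upgrades the gradient control to $\|\Psi_\nu\|_{W^{1,p}_0(\Omega)}\le C\nu^{1/p}\to0$ as $\nu\to0$.

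\textbf{Main obstacle.} The delicate point is the regularization step, where $\Psi_\nu$ must be smooth, compactly supported in $\Omega$, valued in $[0,1]$, identically $1$ on $K_\nu$, and of vanishing energy all at once. Reconciling smoothness and compact support with the exact condition $\Psi_\nu\equiv1$ on $K_\nu$ is precisely what forces the passage through a function that is already constant on a full open neighborhood of $K_\nu$ before smoothing; the Poincaré inequality is the tool that prevents the cut-off multiplication from spoiling the energy estimate, and the smallness of the mollification radius then guarantees that none of the four required properties is destroyed.
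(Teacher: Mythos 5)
The paper itself does not prove this lemma: it is quoted from \cite{b6} and \cite{dmop}, so your proposal has to be judged on its own. Your construction of $K_\nu$ (inner regularity of the finite Borel measure $\lambda$, plus monotonicity of the capacity to get $\operatorname{cap}_p(K_\nu)=0$) is correct. The gap is in the energy estimate for $\Psi_\nu$, and it comes from the order in which you fix the cut-off $\zeta$ and the capacitary function $u_\nu$. The cut-off cannot be ``fixed'': it must equal $1$ on a neighborhood of $K_\nu$, and the compacts $K_\nu$ vary with $\nu$ --- as $\nu\to0$ they must capture almost all of the mass of $\lambda$, and $E$ can spread through all of $\Omega$ (a countable dense subset of $\Omega$ has zero $p$-capacity), so the $K_\nu$ may approach $\partial\Omega$ and no single $\zeta\in C^\infty_c(\Omega)$ serves every $\nu$. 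Writing $\zeta=\zeta_\nu$, the product rule and Poincar\'e give
$\|\nabla(\zeta_\nu T_1(u_\nu^+))\|_{L^p(\Omega)}\le \|\nabla T_1(u_\nu^+)\|_{L^p(\Omega)}+\|\nabla\zeta_\nu\|_{L^\infty(\Omega)}\|T_1(u_\nu^+)\|_{L^p(\Omega)}\le \big(1+C_\Omega\|\nabla\zeta_\nu\|_{L^\infty(\Omega)}\big)\,\nu^{1/p}$,
and since $\|\nabla\zeta_\nu\|_{L^\infty(\Omega)}$ is not under control as $\nu\to0$, your claimed bound $\|\Psi_\nu\|_{W^{1,p}_0(\Omega)}\le C\nu^{1/p}$ with a uniform $C$, hence the convergence to zero, does not follow as written. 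Poincar\'e alone does not ``prevent the cut-off multiplication from spoiling the energy estimate''; it only trades the problem for the uncontrolled constant $\|\nabla\zeta_\nu\|_{L^\infty(\Omega)}$.

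The repair is immediate and stays entirely within your scheme, precisely because $\operatorname{cap}_p(K_\nu)$ is exactly zero and not merely small: reverse the order of the choices. Given $\nu$, first pick $K_\nu$ by inner regularity, then pick $\zeta_\nu\in C^\infty_c(\Omega)$ equal to $1$ near $K_\nu$, and only then use $\operatorname{cap}_p(K_\nu)=0$ to select $u_\nu\in W^{1,p}_0(\Omega)$, $u_\nu\ge1$ a.e.\ in a neighborhood of $K_\nu$, with $\int_\Omega|\nabla u_\nu|^p<\nu\big(1+C_\Omega\|\nabla\zeta_\nu\|_{L^\infty(\Omega)}\big)^{-p}$. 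The rest of your argument (truncation, multiplication by $\zeta_\nu$, mollification at a radius smaller than the distance from $K_\nu$ to the complement of the open set where the function equals $1$ a.e.) then goes through verbatim and yields $\|\Psi_\nu\|_{W^{1,p}_0(\Omega)}\le C_\Omega\,\nu^{1/p}$. Alternatively, one can bypass the cut-off and the mollification altogether by the standard fact that for a \emph{compact} set the $p$-capacity may equivalently be computed over $\varphi\in C^\infty_c(\Omega)$ with $\varphi\ge1$ on the set; composing such a $\varphi$ with a smooth nondecreasing $S:\mathbb{R}\to[0,1]$ vanishing near $0$, equal to $1$ on $[1,\infty)$ and with bounded derivative gives $\Psi_\nu=S(\varphi)$ directly, which is essentially how the quoted references proceed.
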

In the entire paper we will denote by $\Psi_\nu$ a function with the properties of the previous Lemma.
\begin{lemma}\label{dalmaso2}
Let $u:\Omega\to\RR$ be a measurable function almost everywhere finite on $\Omega$ such that $T_k(u)\in W^{1,p}_0(\Omega)$ for every $k>0$. Then there exists a measurable function $v:\Omega\to\RR^N$ such that
$$\nabla T_k(u)=v\chi_{\{|u|\leq k\}},$$ 
and we define the gradient of $u$ as $\nabla u=v$. Moreover, if 
$$\io |\nabla T_k(u)|^p\leq C(k+1)\quad\forall k>0,$$
then $u$ is cap$_p$-almost everywhere finite, i.e. $\operatorname{cap}_p\{x\in\Omega\,:\,|u(x)|=+\infty\}=0$, and there exists a cap$_p$-quasi continuous representative $\tilde{u}$ of $u$, namely a function $\tilde{u}$ such that $\tilde{u}=u$ almost everywhere in $\Omega$ and $\tilde{u}$ is cap$_p$-quasi continuous.
\end{lemma}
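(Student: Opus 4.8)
The plan is to establish the two assertions separately: first the existence of a well-defined generalized gradient $v=\nabla u$, then, under the energy bound, the capacitary finiteness together with the existence of a cap$_p$-quasi continuous representative. For the gradient I would patch together the gradients of the truncations. For $0<h<k$ one has $T_h(u)=T_h(T_k(u))$, so the locality of the weak gradient (the Stampacchia chain rule) gives $\nabla T_h(u)=\chi_{\{|u|<h\}}\nabla T_k(u)$ a.e. in $\Omega$; in particular the fields $\nabla T_k(u)$ are mutually compatible on the sets $\{|u|<h\}$. Since $u$ is a.e. finite, $\Omega=\bigcup_k\{|u|<k\}$ up to a null set, so declaring $v:=\nabla T_k(u)$ on $\{|u|<k\}$ produces a measurable, a.e. finite $v:\Omega\to\rn$ whose value does not depend on $k$. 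As the weak gradient of $T_k(u)$ vanishes a.e. on the level set $\{|u|=k\}$, one then checks $\nabla T_k(u)=v\chi_{\{|u|\le k\}}$ a.e., which is the first claim, and we set $\nabla u:=v$.

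Under the energy bound I would next estimate the $p$-capacity of the super-level sets. The natural competitor for $\{|u|\ge k\}$ is $w_k:=\min\!\big(\tfrac{2}{k}(|u|-\tfrac{k}{2})^+,1\big)\in\sob$, which equals $1$ a.e. on $\{|u|\ge k\}$ and whose gradient is supported in $\{\tfrac{k}{2}<|u|<k\}$. Using $\nabla u=v$ and $\io|\nabla T_k(u)|^p\le C(k+1)$,
\begin{equation*}
\operatorname{cap}_p(\{|u|\ge k\})\le\io|\nabla w_k|^p=\frac{2^p}{k^p}\int_{\{\frac{k}{2}<|u|<k\}}|\nabla u|^p\le\frac{2^pC(k+1)}{k^p}=:\beta_k.
\end{equation*}
Since $p>1$ we have $\beta_k\to0$; because $\{|u|=+\infty\}\subseteq\{|u|\ge k\}$ for every $k$, monotonicity of the capacity yields $\operatorname{cap}_p(\{|u|=+\infty\})=0$.

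Finally I would produce the representative by showing that the cap$_p$-quasi continuous representatives $\tilde u_k$ of $T_k(u)$ are Cauchy in capacity. The point is that for $m>k$ the difference $T_m(u)-T_k(u)=G_k(T_m(u))$ vanishes where $|u|\le k$, so on the quasi continuous level the set $\{|\tilde u_m-\tilde u_k|>\delta\}$ is contained, cap$_p$-almost everywhere, in $\{|\widetilde{T_m(u)}|>k\}$; there $|\tilde u_k|=|T_k(\widetilde{T_m(u)})|=k$, whence the very competitor $w_k$ above equals $1$ cap$_p$-almost everywhere on that set. Consequently $\operatorname{cap}_p(\{|\tilde u_m-\tilde u_k|>\delta\})\le\io|\nabla w_k|^p\le\beta_k$ for all $m>k$ and all $\delta>0$. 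This bound, uniform in $m$ and $\delta$, gives the Cauchy property; a standard extraction then provides a cap$_p$-quasi continuous $\tilde u$ with $\tilde u_{k}\to\tilde u$ cap$_p$-almost everywhere along a subsequence, and since $T_k(u)\to u$ a.e. one gets $\tilde u=u$ a.e., so $\tilde u$ is the desired representative.

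I expect the main obstacle to be exactly this last step. Because the energy $\io|\nabla T_k(u)|^p$ is only controlled by $C(k+1)$ and may genuinely diverge as $k\to\infty$ (as it does for $\gamma>1$), the naive weak-type capacitary estimate applied directly to $T_m(u)-T_k(u)$ is useless, its right-hand side growing with $m$. The decisive idea is to discard the actual size of the difference and exploit only that its support lies cap$_p$-almost everywhere inside $\{|u|>k\}$, whose capacity is governed by the scale-invariant quantity $\beta_k$. The delicate part is to pass rigorously from the a.e. vanishing of $T_m(u)-T_k(u)$ on $\{|u|\le k\}$ to the cap$_p$-almost everywhere inclusion; this is handled by working throughout with quasi continuous representatives and the commutation identities $\widetilde{T_k(u)}=T_k(\widetilde{T_m(u)})$ and $\widetilde{G_k(T_m(u))}=G_k(\widetilde{T_m(u)})$ valid cap$_p$-almost everywhere.
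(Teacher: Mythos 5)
The paper never proves this lemma: it is stated as a preliminary collected from the references \cite{b6} and \cite{dmop}, so there is no internal proof to compare against, and your attempt should be judged as a reconstruction of the classical argument — which it essentially is, and it is correct. The patching of the fields $\nabla T_k(u)$ via the chain rule, together with the a.e.\ vanishing of weak gradients on level sets (needed both for $\nabla T_k(u)$ on $\{|u|=k\}$ and for $v=\nabla T_m(u)$ on that same set), gives the first claim; the rescaled truncation $w_k$ with energy $\beta_k=2^pC(k+1)/k^p\to0$ (where $p>1$ is essential) controls the capacity of superlevel sets; and the Cauchy-in-capacity argument for the representatives $\widetilde{T_k(u)}$, exploiting that $\widetilde{T_m(u)}-\widetilde{T_k(u)}=G_k(\widetilde{T_m(u)})$ is supported cap$_p$-a.e.\ in $\{|\widetilde{T_m(u)}|>k\}$, where $\tilde w_k=1$ cap$_p$-a.e., produces the quasi-continuous limit. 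One caveat deserves emphasis: the displayed inequality $\operatorname{cap}_p(\{|u|\ge k\})\le\beta_k$ in your second paragraph is formally ill-posed as written, since at that stage $u$ is merely measurable and $p$-capacity is not invariant under modification on Lebesgue-null sets (nor does the paper's neighborhood-based definition accept a competitor that is $\ge 1$ only a.e.\ on the set). The correct ordering is to read that set through quasi-continuous representatives, e.g.\ as $\{|\widetilde{T_m(u)}|\ge k\}$ for $m>k$, invoke the standard fact that $\operatorname{cap}_p(E)\le\int_\Omega|\nabla w|^p$ whenever $w\in W^{1,p}_0(\Omega)$ and $\tilde w\ge1$ cap$_p$-a.e.\ on $E$, and only then conclude $\operatorname{cap}_p\{\tilde u=\infty\}=0$ for the representative $\tilde u$ you construct. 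You flag exactly this in your last paragraph and the commutation identities you cite do repair it, so this is a matter of sequencing rather than a gap. You also rely tacitly on two further standard facts that should be named as citations: that a sequence of quasi-continuous functions which is Cauchy in capacity has a subsequence converging quasi-uniformly to a quasi-continuous function, and that cap$_p$-null sets are Lebesgue-null (needed to pass from q.e.\ convergence of $\widetilde{T_{k_j}(u)}$ to $\tilde u=u$ a.e.).
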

In what follows, when dealing with a function $u$ that satisfies the assumptions of the previous Lemma, we will always consider its cap$_p$-quasi continuous representative.
\begin{lemma}\label{dalmaso3}
Let $\mu_d$ be a nonnegative diffuse measure with respect to the $p$-capacity and let $u\in W^{1,p}_0(\Omega)\cap L^\infty(\Omega)$ be a nonnegative function. Then, up to the choice of its cap$_p$-quasi continuous representative, $u$ belongs to $L^\infty(\Omega,\mu_d)$ and
$$\io ud\mu_d\leq\|u\|_{L^\infty(\Omega)}\mu_d(\Omega).$$
\end{lemma}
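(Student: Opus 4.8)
\emph{Proof proposal.} The plan is to reduce everything to the single principle that, for Sobolev functions, an almost-everywhere bound upgrades to a bound holding outside a set of zero $p$-capacity for the cap$_p$-quasi continuous representative, and then to exploit that a diffuse measure ignores such sets.

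First I would set $M:=\|u\|_{L^\infty(\Omega)}$ and observe that $0\le u\le M$ almost everywhere in $\Omega$, so that, as elements of $W^{1,p}_0(\Omega)$,
$$u=\min(u,M)=\max(u,0),$$
these truncations again belonging to $W^{1,p}_0(\Omega)$. Denoting by $\tilde u$ the cap$_p$-quasi continuous representative of $u$, the functions $\min(\tilde u,M)$ and $\max(\tilde u,0)$ are themselves cap$_p$-quasi continuous, since the lattice operations preserve quasi continuity, and they coincide almost everywhere with $\min(u,M)$ and $\max(u,0)$ respectively; hence they are cap$_p$-quasi continuous representatives of the same $W^{1,p}_0(\Omega)$ functions. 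By the uniqueness of the quasi continuous representative up to sets of zero $p$-capacity (recalled before Lemma \ref{dalmaso}) I obtain
$$\tilde u=\min(\tilde u,M)\quad\text{and}\quad\tilde u=\max(\tilde u,0)\qquad\text{cap}_p\text{-almost everywhere},$$
that is, $0\le\tilde u\le M$ outside a Borel set $Z$ with $\operatorname{cap}_p(Z)=0$.

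Next I would invoke that $\mu_d$ is diffuse: since $\operatorname{cap}_p(Z)=0$, we have $\mu_d(Z)=0$, so the bound $0\le\tilde u\le M$ holds $\mu_d$-almost everywhere. Because $\tilde u$ is cap$_p$-quasi continuous and $\mu_d$ does not charge sets of zero $p$-capacity, $\tilde u$ is $\mu_d$-measurable, and being bounded by $M$ it lies in $L^\infty(\Omega,\mu_d)$, which is the first claim. The integral estimate then follows at once by integrating the bound $0\le\tilde u\le M$ against $\mu_d$:
$$\io u\,d\mu_d=\io\tilde u\,d\mu_d\le M\,\mu_d(\Omega)=\|u\|_{L^\infty(\Omega)}\,\mu_d(\Omega).$$

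The only delicate point, and the step I would treat most carefully, is the passage from the almost-everywhere bound to the cap$_p$-almost-everywhere bound for $\tilde u$; everything else is bookkeeping. The cleanest justification is the one above, comparing two cap$_p$-quasi continuous representatives of the same Sobolev function; alternatively, one may argue that the quasi-open set $\{\tilde u>M\}$ has zero Lebesgue measure and therefore zero $p$-capacity. The $\mu_d$-measurability of $\tilde u$ and the fact that diffuse measures vanish on zero-capacity sets are standard and can be quoted from \cite{b6,dmop,fuku}.
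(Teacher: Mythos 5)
Your proof is correct. Note that the paper itself does not prove Lemma \ref{dalmaso3}: it is recalled, together with Lemmas \ref{dalmaso} and \ref{dalmaso2}, from \cite{b6} and \cite{dmop}, so there is no in-paper argument to compare against; what you have produced is a self-contained derivation relying only on facts already recorded in Section \ref{sec:not}. Specifically, you use that $\min(u,M)$ and $\max(u,0)$ coincide with $u$ as elements of $W^{1,p}_0(\Omega)$, that post-composition with the continuous maps $s\mapsto\min(s,M)$ and $s\mapsto\max(s,0)$ preserves cap$_p$-quasi continuity, that two cap$_p$-quasi continuous representatives of the same Sobolev function agree cap$_p$-almost everywhere (stated in the paper's preliminaries), and that a diffuse measure annihilates sets of zero $p$-capacity. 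The delicate step --- upgrading the almost-everywhere bound $0\le u\le M$ to a cap$_p$-almost-everywhere bound on $\tilde u$ --- is handled correctly by comparing the two representatives $\tilde u$ and $\min(\tilde u,M)$, and this uses only the uniqueness statement as formulated in the paper (both functions are representatives of the single Sobolev function $u$). The $\mu_d$-measurability claim is also sound: choosing open sets $E_n$ with $\operatorname{cap}_p(E_n)<1/n$ off which $\tilde u$ is continuous, $\tilde u$ is Borel outside the Borel set $\bigcap_n E_n$, which has zero $p$-capacity and hence zero $\mu_d$-measure. Your alternative remark, that the quasi-open set $\{\tilde u>M\}$ is Lebesgue-null and therefore capacity-null, is a true standard fact but is not needed given the main argument.
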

We recall also the following very well known consequence of the Egorov Theorem.
\begin{lemma}\label{Orsina}
Let $f_n$ be a sequence converging to $f$ weakly in $L^1(\Omega)$ and let $g_n$ be a sequence converging to $g$ almost everywhere in $\Omega$ and *-weakly in $L^\infty(\Omega)$. Then
$$\dis \lim_{n\to +\infty} \int_{\Omega} f_n g_n = \int_\Omega fg.$$
\end{lemma}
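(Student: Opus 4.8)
The plan is to split the difference into two contributions and treat them by different mechanisms. Writing
\begin{equation*}
\io f_n g_n - \io fg = \io f_n(g_n-g) + \io (f_n-f)g,
\end{equation*}
I first observe that the hypotheses force $g\in L^\infty(\Omega)$: the $*$-weak convergence in $L^\infty(\Omega)$ entails $\sup_n\|g_n\|_{L^\infty(\Omega)}=:M<\infty$, and since $g_n\to g$ almost everywhere we get $\|g\|_{L^\infty(\Omega)}\le M$. Consequently $g$ is an admissible test function for the weak $L^1$ convergence $f_n\rightharpoonup f$, and the second term $\io(f_n-f)g$ tends to $0$ directly by the definition of weak convergence in $L^1(\Omega)=(L^\infty(\Omega))'$.

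The genuine difficulty lies in the first term $\io f_n(g_n-g)$, where I cannot invoke weak convergence because the factor $g_n-g$ is not fixed. The key is that weak convergence in $L^1(\Omega)$ yields, via the Dunford–Pettis theorem, the equi-integrability of the family $\{f_n\}$: for every $\varepsilon>0$ there is $\delta>0$ such that $|A|<\delta$ implies $\sup_n\int_A|f_n|<\varepsilon$. In particular $\sup_n\io|f_n|=:C<\infty$. I would then apply Egorov's theorem to the a.e.\ convergence $g_n\to g$ on the finite-measure set $\Omega$: for every $\sigma>0$ there is a measurable set $A_\sigma\subset\Omega$ with $|A_\sigma|<\sigma$ on whose complement $g_n\to g$ uniformly.

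Combining these, I would estimate
\begin{equation*}
\left|\io f_n(g_n-g)\right|\le \int_{\Omega\setminus A_\sigma}|f_n|\,|g_n-g| + \int_{A_\sigma}|f_n|\,|g_n-g|\le C\,\|g_n-g\|_{L^\infty(\Omega\setminus A_\sigma)} + 2M\int_{A_\sigma}|f_n|.
\end{equation*}
Choosing $\sigma<\delta$ makes the last integral smaller than $2M\varepsilon$ uniformly in $n$ by equi-integrability, while the uniform convergence on $\Omega\setminus A_\sigma$ sends the first summand to $0$ as $n\to\infty$. Letting $n\to\infty$ and then $\varepsilon\to0$ gives $\io f_n(g_n-g)\to0$, and adding the two contributions yields the claim. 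The main obstacle is precisely the interplay in the first term: one must simultaneously exploit equi-integrability of $\{f_n\}$ (to kill the bad set $A_\sigma$) and Egorov's uniform convergence (to control the good set), since neither tool alone suffices.
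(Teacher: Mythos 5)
Your proof is correct and follows exactly the route the paper has in mind: the lemma is stated there without proof as a ``very well known consequence of the Egorov Theorem,'' and your argument --- splitting off $\int_\Omega (f_n-f)g$, which vanishes by weak $L^1$ convergence since $g\in L^\infty(\Omega)$, and handling $\int_\Omega f_n(g_n-g)$ by combining Dunford--Pettis equi-integrability of $\{f_n\}$ with Egorov's theorem --- is precisely that standard argument. One cosmetic slip: the duality should read $L^\infty(\Omega)=(L^1(\Omega))'$ rather than $L^1(\Omega)=(L^\infty(\Omega))'$; this does not affect the proof, since weak convergence in $L^1(\Omega)$ is by definition tested against functions in $L^\infty(\Omega)$.
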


\section{Main assumptions and results}\label{sec:set}

We will consider the following
\begin{equation}
\begin{cases}
\displaystyle -\operatorname{div}(a(x,\nabla u)) = H(u)\mu &  \text{in}\, \Omega, \\
u=0 & \text{on}\ \partial \Omega,
\label{pbmain}
\end{cases}
\end{equation}
where $\displaystyle{a(x,\xi):\Omega\times\mathbb{R}^{N} \to \mathbb{R}^{N}}$ is a Carath\'eodory function satisfying the classical Leray-Lions structure conditions for $1<p<N$, namely
\begin{align}
&a(x,\xi)\cdot\xi\ge \alpha|\xi|^{p}, \ \ \ \alpha>0,
\label{cara1}\\
&|a(x,\xi)|\le \beta|\xi|^{p-1}, \ \ \ \beta>0,
\label{cara2}\\
&(a(x,\xi) - a(x,\xi^{'} )) \cdot (\xi -\xi^{'}) > 0,
\label{cara3}	
\end{align}
for every $\xi\neq\xi^{'}$ in $\mathbb{R}^N$ and for almost every $x$ in $\Omega$.
\\Moreover $\mu$ is a nonnegative bounded Radon measure on $\Omega$ uniquely decomposed as the sum $\mu_d+\mu_c$, where $\mu_d$ is a diffuse measure with respect to the $p$-capacity and $\mu_c$ is a measure concentrated on a set of zero $p$-capacity. We underline that (see Remark \ref{linda} below) we will always assume 
\begin{equation}\label{hmu}
\mu_d\not\equiv0.
\end{equation}
Finally, if not otherwise specified, $H:(0,+\infty)\to (0,+\infty)$ is a continuous function, possibly blowing up at the origin, such that the following properties hold true
\begin{equation}
\exists\,\lim_{s\to \infty} H(s):=H(\infty)<\infty
\label{h}\end{equation}
\begin{equation}\label{h1}
\displaystyle \exists\ {C},s_0>0, \gamma\ge 0\;\ \text{s.t.}\;\  H(s)\le \frac{C}{s^\gamma} \ \ \text{if} \ \ s<s_0.
\end{equation} 
We emphasize that, since we are allowing $\gamma$ to be zero, we are taking into account also the case of a bounded $H$. Moreover the assumption on the strict positivity of $H$ is a technical one needed to handle the case in which the singular part of the measure is not identically zero, as widely explained in Section \ref{H=0}. 
\\ \\ \noindent First of all it is worth to clarify what we mean by \emph{solution} to problem \eqref{pbmain}. We provide two different notions of solution.
\begin{defin}\label{renormalized}
Let $a$ satisfy \eqref{cara1}, \eqref{cara2}, \eqref{cara3}, let $\mu$ be a nonnegative bounded Radon measure and let $H$ satisfy \eqref{h} and \eqref{h1}. A positive function $u$, which is almost everywhere finite on $\Omega$, is a \emph{renormalized solution} to problem \eqref{pbmain} if $T_k(u) \in W^{1,p}_0(\Omega)$ for every $k>0$ and if the following hold
\begin{gather}
H(u)S(u)\varphi \in L^1(\Omega,\mu_d)\;\text{and}\notag\\
\int_{\Omega} a(x,\nabla u)\cdot\nabla \varphi S(u) + \int_{\Omega}a(x,\nabla u)\cdot\nabla u S'(u)\varphi = \int_{\Omega}H(u)S(u)\varphi d\mu_d \label{ren1}\\
\forall S \in W^{1,\infty}(\mathbb{R})\;\text{with compact support and}\;\forall\varphi \in W^{1,p}_0(\Omega)\cap L^\infty(\Omega),\notag
\end{gather}
\begin{equation}
\label{ren2}
\lim_{t\to \infty} \frac{1}{t}\int_{\{t< u< 2t\}} a(x,\nabla u)\cdot \nabla u \varphi = H(\infty)\int_{\Omega}\varphi d\mu_c\quad\forall \varphi \in C_b(\Omega).
\end{equation}
\end{defin}
\begin{defin}\label{distributional}
Let $a$ satisfy \eqref{cara1}, \eqref{cara2}, \eqref{cara3}, let $\mu$ be a nonnegative bounded Radon measure and let $H$ satisfy \eqref{h} and \eqref{h1}. A positive and measurable function $u$ such that $|\nabla u|^{p-1} \in L^1_{loc}(\Omega)$ is a \emph{distributional solution} to problem \eqref{pbmain} if $  H(u) \in L^1_{loc}(\Omega,\mu_d)$, and the following hold
\begin{equation}\label{troncate}
T_k^\frac{\tau-1+p}{p}(u) \in W^{1,p}_0(\Omega)\quad\forall k>0,\quad\text{where}\quad\dis\tau= \max\left(1, \gamma\right),
\end{equation}
and
\begin{equation} \displaystyle \int_{\Omega}a(x,\nabla u) \cdot \nabla \varphi =\int_{\Omega} H(u)\varphi d\mu_d + H(\infty)\int_{\Omega} \varphi d\mu_c \ \ \ \forall \varphi \in C^1_c(\Omega).\label{distrdef}\end{equation}
\end{defin}
The notion of renormalized solution is way more general than the distributional one. Indeed, if $\gamma\leq1$, it results that the former implies the latter one.
\begin{lemma}
\label{equivrindis}
Let $\gamma\leq1$ and let $u$ be a renormalized solution to \eqref{pbmain}. Then $u$ is also a distributional solution to \eqref{pbmain}.
\end{lemma}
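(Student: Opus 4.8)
The plan is to read off the three requirements of Definition \ref{distributional} directly from the renormalized formulation, the only nontrivial ingredients being the local integrability of $|\nabla u|^{p-1}$ and of $H(u)$ with respect to $\mu_d$, plus the distributional identity itself. The regularity condition \eqref{troncate} is immediate: since $\gamma\le1$ we have $\tau=\max(1,\gamma)=1$, so $\frac{\tau-1+p}{p}=1$ and \eqref{troncate} reduces to $T_k(u)\in W^{1,p}_0(\Omega)$, which holds by definition of renormalized solution. Positivity and measurability of $u$ are likewise part of the definition.

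The heart of the matter is an a priori energy estimate, and this is where $\gamma\le1$ is essential. I would test \eqref{ren1} with $S=\theta_j$ (see \eqref{teta}) and $\varphi=T_k(u)$, taking $j>k$. Since $\theta_j(u)\equiv1$ on $\{u<k\}$, the first term equals $\int_\Omega a(x,\nabla u)\cdot\nabla T_k(u)$, which by \eqref{cara1} bounds $\alpha\int_\Omega|\nabla T_k(u)|^p$ from below; the term carrying $\theta_j'(u)=-\frac1j\chi_{\{j<u<2j\}}$ equals $-\frac kj\int_{\{j<u<2j\}}a(x,\nabla u)\cdot\nabla u$, which tends to $-kH(\infty)\mu_c(\Omega)$ as $j\to\infty$ by \eqref{ren2} with $\varphi\equiv1$; and the right-hand side increases, by monotone convergence, to $\int_\Omega H(u)T_k(u)\,d\mu_d$. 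Letting $j\to\infty$ yields
$$\alpha\int_\Omega|\nabla T_k(u)|^p\le\int_\Omega H(u)T_k(u)\,d\mu_d+kH(\infty)\mu_c(\Omega).$$
Now comes the crucial pointwise bound: using $T_k(s)\le s$ and \eqref{h1} one gets $H(s)T_k(s)\le Cs^{1-\gamma}\le Cs_0^{1-\gamma}$ for $0<s<s_0$, while $H(s)T_k(s)\le k\sup_{[s_0,\infty)}H$ for $s\ge s_0$, the supremum being finite by continuity of $H$ and by \eqref{h}. Hence $H(u)T_k(u)\le Ck$ for $k\ge1$, and since $\mu_d(\Omega)<\infty$ we conclude $\int_\Omega|\nabla T_k(u)|^p\le Ck$. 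It is exactly here that $\gamma\le1$ enters, through the boundedness of $s\mapsto H(s)s$ near the origin; for $\gamma>1$ this fails, which is why $T_k(u)$ is then only locally of finite energy. From $\int_\Omega|\nabla T_k(u)|^p\le Ck$ the standard Marcinkiewicz-type estimates give $|\nabla u|^{p-1}\in L^q(\Omega)$ for every $q<\frac N{N-1}$; in particular $|\nabla u|^{p-1}\in L^1_{loc}(\Omega)$ and, by \eqref{cara2}, $a(x,\nabla u)\in L^1_{loc}(\Omega)^N$.

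For the distributional identity, together with $H(u)\in L^1_{loc}(\Omega,\mu_d)$, I would now test \eqref{ren1} with $S=\theta_k$ and $\varphi\in C^1_c(\Omega)$ (admissible, being in $W^{1,p}_0(\Omega)\cap L^\infty(\Omega)$), obtaining
$$\int_\Omega a(x,\nabla u)\cdot\nabla\varphi\,\theta_k(u)-\frac1k\int_{\{k<u<2k\}}a(x,\nabla u)\cdot\nabla u\,\varphi=\int_\Omega H(u)\theta_k(u)\varphi\,d\mu_d.$$
As $k\to\infty$ the first term converges to $\int_\Omega a(x,\nabla u)\cdot\nabla\varphi$ by dominated convergence, since $\theta_k(u)\to1$ a.e. with $0\le\theta_k\le1$ and $a(x,\nabla u)\cdot\nabla\varphi\in L^1(\Omega)$ by the previous step, while the second term tends to $H(\infty)\int_\Omega\varphi\,d\mu_c$ by \eqref{ren2}. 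Taking first $\varphi\ge0$, the right-hand side increases to $\int_\Omega H(u)\varphi\,d\mu_d$ by monotone convergence ($\theta_k(u)\uparrow1$), and the finiteness of the limit of the left-hand side forces $H(u)\varphi\in L^1(\Omega,\mu_d)$; choosing $\varphi\equiv1$ on a given compact set shows $H(u)\in L^1_{loc}(\Omega,\mu_d)$. For general $\varphi\in C^1_c(\Omega)$ the bound $H(u)|\varphi|\le\|\varphi\|_{L^\infty(\Omega)}H(u)\chi_{\supp\varphi}\in L^1(\Omega,\mu_d)$ then permits dominated convergence on the right-hand side as well, and passing to the limit gives exactly \eqref{distrdef}.

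The main obstacle is the a priori estimate: one must pass to the limit in \eqref{ren1} along $\theta_j$ while keeping track of the order of limits and extracting, via \eqref{ren2}, the contribution of the singular part $\mu_c$; and, above all, one must exploit $\gamma\le1$ through the boundedness of $H(s)s$ near the origin to obtain the sharp linear-in-$k$ control $\int_\Omega|\nabla T_k(u)|^p\le Ck$ that yields $|\nabla u|^{p-1}\in L^1_{loc}(\Omega)$. Once this local integrability is in hand, everything else is a routine matter of dominated and monotone convergence.
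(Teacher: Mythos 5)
Your argument follows essentially the same route as the paper's: the energy estimate obtained by testing \eqref{ren1} with $S=\theta_t$ and $\varphi=T_k(u)$ (with the splitting at $s_0$ exploiting $\gamma\le 1$), the Marcinkiewicz-type estimates for $|\nabla u|^{p-1}$, and the passage to the limit in \eqref{ren1} with $S=\theta_t$, $\varphi\in C^1_c(\Omega)$ using \eqref{ren2}. Your derivation of $H(u)\in L^1_{loc}(\Omega,\mu_d)$ from the limit identity, via monotone convergence plus finiteness of the left-hand side, is a harmless variant of the paper's argument (which instead uses the integrability $H(u)\theta_1(u)\varphi\in L^1(\Omega,\mu_d)$ built into the definition, together with Lemma \ref{dalmaso3}).

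There is, however, one genuine gap: every pointwise convergence statement you use under an integral $d\mu_d$ --- namely $\theta_j(u)\uparrow 1$ in the monotone convergence steps and $\theta_k(u)\to 1$ in the dominated convergence step --- must hold $\mu_d$-almost everywhere, not merely Lebesgue-almost everywhere. Since $\mu_d$ need not be absolutely continuous with respect to Lebesgue measure (it is only diffuse with respect to the $p$-capacity, and may well be concentrated on a Lebesgue-null set), the almost everywhere finiteness of $u$ required by Definition \ref{renormalized} is not sufficient: if one had $u=+\infty$ on a set of positive $\mu_d$-measure, then $\theta_k(u)\equiv 0$ there for every $k$, your limits would produce $\int_{\{u<\infty\}}H(u)\varphi\,d\mu_d$ instead of $\int_\Omega H(u)\varphi\,d\mu_d$, and \eqref{distrdef} would be off by the term $H(\infty)\int_{\{u=\infty\}}\varphi\,d\mu_d$. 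The paper closes precisely this point by invoking Lemma \ref{dalmaso2}: the estimate $\int_\Omega|\nabla T_k(u)|^p\le C(k+1)$, which you have already proved, implies that $u$ is cap$_p$-almost everywhere finite and admits a cap$_p$-quasi continuous representative; since $\mu_d$ charges no set of zero $p$-capacity, $u$ is then finite $\mu_d$-almost everywhere and all of your convergence statements become legitimate. The fix is one line, but as written the $\mu_d$-a.e.\ assertions are unjustified.
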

\begin{proof}
It follows from the definition of renormalized solution that \eqref{troncate} holds. Taking as test functions in \eqref{ren1} $S=\theta_t$, where $\theta_t$ is defined in \eqref{teta}, and $\varphi=T_k(u)$, with $s_0<k<t$, we obtain
\begin{equation*}
\io a(x,\nabla u)\cdot \nabla T_k(u)\theta_t(u)\leq\frac{k}{t}\int_{\{t<u<2t\}} a(x,\nabla u)\cdot \nabla u + \io H(u)T_k(u)\theta_t(u)d\mu_d.
\end{equation*}
Using \eqref{cara1} and \eqref{h1}, we find
\begin{align*}
\alpha\io |\nabla T_k(u)|^p &\leq \frac{k}{t}\int_{\{t< u< 2t\}} a(x,\nabla u)\cdot \nabla u +\int_{\{u<s_0\}}H(u)T_k(u)\theta_t(u)d\mu_d\\
&+\int_{\{u\geq s_0\}} H(u)T_k(u)\theta_t(u)d\mu_d \leq \frac{k}{t}\int_{\{t<u<2t\}} a(x,\nabla u)\cdot \nabla u\\
&+Cs_0^{1-\gamma}\|\mu_d\|_{\mathcal{M}(\Omega)}+k\|H\|_{L^{\infty}([s_0,+\infty))}\|\mu_d\|_{\mathcal{M}(\Omega)},
\end{align*}
so that, passing to the limit as $t\to\infty$, we find that there exists a constant $C>0$ such that
\begin{equation}
\label{rendis1}
\io |\nabla T_k(u)|^p\leq C(k+1), \quad \forall k>0.
\end{equation}
By \eqref{rendis1}, using Lemma \ref{dalmaso2} we deduce that $u$ is cap$_p$-almost everywhere finite and cap$_p$-quasi continuous and, using Lemma $4.2$ of \cite{b6}, we deduce moreover that $\displaystyle |\nabla u|^{p-1}\in L^1(\Omega)$. Now taking $\varphi\in C_c^1(\Omega)$ and $S=\theta_t$ in \eqref{ren1} we obtain
\begin{equation}
\label{rendis2}
\io a(x,\nabla u)\cdot \nabla\varphi\theta_t(u)=\frac{1}{t}\int_{\{t<u<2t\}} a(x,\nabla u)\cdot \nabla u\varphi + \io H(u)\varphi\theta_t(u)d\mu_d.
\end{equation}
By \eqref{ren1} it results $H(u)\theta_1(u)\varphi\in L^1{(\Omega,\mu_d)}$, and so, using Lemma \ref{dalmaso3}, we find
\begin{align*}
&\io |H(u)\varphi| d\mu_d=\int_{\{u<1\}}H(u)|\varphi| d\mu_d+\int_{\{u\geq 1\}}H(u)|\varphi|d\mu_d \\
&\leq \io H(u)\theta_1(u)|\varphi|d\mu_d+\|H\|_{L^{\infty}([1,+\infty))}\|\varphi\|_{L^{\infty}(\Omega)}\|\mu_d\|_{\mathcal{M}(\Omega)}\leq C,
\end{align*}
that implies $H(u)\in L^1_{loc}(\Omega,\mu_d)$. Letting $t$ go to infinity in \eqref{rendis2} we obtain, applying Lebesgue's Theorem for general measure and \eqref{ren2}, that \eqref{distrdef} holds. Hence $u$ is a distributional solution to \eqref{pbmain}.
\end{proof}
We will prove the following results.
\begin{theorem}\label{teoexrinuniqueness}
Let $a$ satisfy \eqref{cara1}, \eqref{cara2}, \eqref{cara3}, and let $\mu$ be a nonnegative bounded Radon measure which satisfies \eqref{hmu}. If $H$ satisfies \eqref{h} and \eqref{h1} with $\gamma\le 1$, there exists a renormalized solution $u$ to problem \eqref{pbmain}. Moreover,
\begin{itemize}
\item [i)] if $1<p\leq2-\frac{1}{N}$ then $u^{p-1}\in L^q(\Omega)$  $\forall\, q<\frac{N}{N-p}$ and $\quad|\nabla u|^{p-1} \in L^q(\Omega)$ $\forall\, q<\frac{N}{N-1}$,
\item [ii)] if $p>2-\frac{1}{N}$ then $u \in W_0^{1,q}(\Omega)\;\forall\,q<\frac{N(p-1)}{N-1}$.
\end{itemize}
Finally, if $H$ is non-increasing and $\mu_c\equiv 0$, then $u$ is unique.
\end{theorem}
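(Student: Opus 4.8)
The plan is to construct $u$ by a desingularizing approximation, extract uniform estimates, pass to the limit to obtain the renormalized formulation and the regularity, and finally establish uniqueness by a comparison argument.

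\emph{Approximation.} I would fix a sequence of bounded nonnegative functions $\mu_n\in L^\infty(\Omega)$ converging to $\mu$ in the narrow topology and respecting the decomposition $\mu=\mu_d+\mu_c$ afforded by \cite{bgo} and Lemma \ref{dalmaso}, and replace $H$ by the bounded truncations $H_n(s):=\min\{H(s),n\}$. The approximate problems
$$-\operatorname{div}(a(x,\nabla u_n))=H_n(u_n)\mu_n\quad\text{in }\Omega,\qquad u_n=0\ \text{on }\partial\Omega,$$
admit a weak solution $u_n\in\sob$ by the classical Leray--Lions theory, since the right-hand side is bounded. Because $H>0$, a comparison/maximum-principle argument yields a uniform lower bound $u_n\ge c_\omega>0$ on every $\omega\Subset\Omega$, which is what makes the limit term $H(u)$ meaningful despite the singularity.

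\emph{A priori estimate and regularity.} Testing with $T_k(u_n)$, using the coercivity \eqref{cara1}, and splitting the right-hand side according to $\{u_n<s_0\}$ and $\{u_n\ge s_0\}$, the bound \eqref{h1} with $\gamma\le1$ gives $H_n(s)T_k(s)\le Cs_0^{1-\gamma}$ on the first set and $H_n(s)T_k(s)\le k\|H\|_{L^\infty([s_0,\infty))}$ on the second, whence
$$\io|\nabla T_k(u_n)|^p\le C(k+1)\qquad\forall k>0,$$
uniformly in $n$; this is exactly the structure already seen in the proof of Lemma \ref{equivrindis}. From this single estimate the dichotomy i)--ii) follows by the classical Boccardo--Gallou\"et/Marcinkiewicz argument, controlling the decay of $|\{u_n\ge k\}|$ and of the level sets of $|\nabla u_n|$ and passing to the limit by weak lower semicontinuity: one obtains $u\in M^{N(p-1)/(N-p)}(\Omega)$ and $|\nabla u|\in M^{N(p-1)/(N-1)}(\Omega)$, which read as ii) when $p>2-\tfrac1N$ (the exponent exceeds $1$) and as the stated $L^q$ bounds on $u^{p-1}$ and $|\nabla u|^{p-1}$ when $1<p\le2-\tfrac1N$.

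\emph{Passage to the limit.} The estimate lets me extract $u$ with $T_k(u)\in\sob$ and apply Lemma \ref{dalmaso2}, so $u$ is cap$_p$-almost everywhere finite and admits a cap$_p$-quasicontinuous representative. The technical heart is the almost-everywhere convergence $\nabla u_n\to\nabla u$: I would prove strong $L^p$ convergence of $\nabla T_k(u_n)$ by the usual monotonicity argument based on \eqref{cara3}, testing with $T_k(u_n)-T_k(u)$ regularized by the renormalizing factor $\theta_t$ and by the cutoffs $\Psi_\nu$ of Lemma \ref{dalmaso}, which isolate the mass of $\mu_c$ carried on a set of zero capacity. Once the gradients converge, \eqref{ren1} follows by passing to the limit with $S$ of compact support and $\varphi\in\sob\cap L^\infty$, the integrability against $\mu_d$ being guaranteed by Lemma \ref{dalmaso3}; and \eqref{ren2} comes from analysing $\frac1t\int_{\{t<u_n<2t\}}a(x,\nabla u_n)\cdot\nabla u_n\,\varphi$, where on the high level sets $H_n(u_n)\to H(\infty)$ and the remaining mass of $\mu_n$ concentrates, producing exactly $H(\infty)\int_\Omega\varphi\,d\mu_c$. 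I expect this gradient-convergence step, combined with the separation of the concentrated part $\mu_c$, to be the main obstacle.

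\emph{Uniqueness.} When $H$ is non-increasing and $\mu_c\equiv0$, the measure $\mu=\mu_d$ is diffuse, so \eqref{ren2} reduces to $\lim_{t}\frac1t\int_{\{t<u<2t\}}a(x,\nabla u)\cdot\nabla u\,\varphi=0$. Given two renormalized solutions $u_1,u_2$, I would run a Kato-type comparison: use \eqref{ren1} for each with renormalization $S=\theta_m$ and a test function built from a truncation $T_\delta$ of the difference of the bounded parts of $u_1$ and $u_2$, subtract, and let $m\to\infty$, discarding the renormalization remainders precisely because the diffuse condition makes them vanish. This yields
$$\int_\Omega\big(a(x,\nabla u_1)-a(x,\nabla u_2)\big)\cdot\nabla T_\delta(u_1-u_2)=\int_\Omega\big(H(u_1)-H(u_2)\big)T_\delta(u_1-u_2)\,d\mu_d.$$
The left side is nonnegative by strict monotonicity \eqref{cara3}, while the right side is nonpositive because $H$ is non-increasing, so that $(H(u_1)-H(u_2))T_\delta(u_1-u_2)\le0$ pointwise; hence both vanish, forcing $\nabla u_1=\nabla u_2$ almost everywhere on $\{|u_1-u_2|<\delta\}$, and letting $\delta\to\infty$ gives $u_1=u_2$. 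The delicate point is the admissibility of the difference as a test function and the control of the remainder terms as $m\to\infty$, which is exactly where the absence of $\mu_c$ is essential.
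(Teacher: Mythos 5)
Your overall strategy mirrors the paper's: a desingularized approximation, the key estimate $\io|\nabla T_k(u_n)|^p\le C(k+1)$ obtained by testing with $T_k(u_n)$, the Boccardo--Gallou\"et/Marcinkiewicz dichotomy for i)--ii), strong convergence of truncations via monotonicity with the cutoffs $\Psi_\nu$ and the renormalizing factors $\theta_t$, and a Kato-type comparison for uniqueness (your uniqueness sketch is essentially the argument of Theorem $2.11$ of \cite{mupo}, which is exactly what the paper invokes; as a minor point, once the left-hand side vanishes you get $T_\delta(u_1-u_2)=0$ a.e., hence $u_1=u_2$ directly, with no need to let $\delta\to\infty$).

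However, there is a genuine gap in your approximation of the datum: you replace the \emph{whole} measure, including its diffuse part, by $L^\infty$ functions $\mu_n$ converging only in the narrow topology. Narrow convergence of the diffuse approximations is not enough to pass to the limit in $\io H_n(u_n)S(u_n)\varphi\,\mu_{n,d}$: the integrands converge only almost everywhere and weakly-$*$ in $L^\infty$, and against a merely narrowly convergent sequence of densities the mass can concentrate exactly where the convergence fails. None of the available tools applies (Lemma \ref{Orsina} requires \emph{weak $L^1$} convergence of $\mu_{n,d}$; dominated convergence requires a \emph{fixed} measure). This is precisely why the paper uses two rigid schemes: for $\gamma=0$ it approximates $\mu_d$ through the structured decomposition \eqref{approxmisura}, $\mu_{n,d}=f_n-\operatorname{div}(F_n)$ with $f_n\rightharpoonup f$ weakly in $L^1(\Omega)$ and $F_n\to F$ strongly in $L^{p'}(\Omega)^N$, which is what makes the $H_j$-regularization step and the weak--strong pairing with $F_n$ work; for $\gamma>0$ (allowed in your statement, since $0<\gamma\le1$) it does \emph{not} approximate $\mu_d$ at all: in \eqref{pbapproxeps} the datum is $H_n(u_{n,m})(\mu_d+\mu_m)$, the approximate problems are solved via the $\gamma=0$ result, and the limit in the $\mu_d$-term is taken by dominated convergence with respect to the fixed $\mu_d$, using cap$_p$-a.e.\ convergence of $u_{n,m}$.

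The same defect undermines your positivity step. The paper's Lemma \ref{locpos} compares $u_n$ with \emph{one fixed} auxiliary solution $v$ of $-\operatorname{div}(a(x,\nabla v))=h(v)\mu_d$ with $h\le H_n$ non-increasing; the sign argument in the Kato comparison works precisely because both problems carry the same measure $\mu_d$. With varying data $\mu_{n,d}$ you must compare with solutions $v_n$ of varying problems, and the constant produced by the strong maximum principle is not uniform in $n$; you would need an additional uniformity argument (e.g.\ a weak Harnack inequality combined with a uniform local $L^q$ lower bound on $v_n$), which your sketch does not supply. To repair the proposal, either keep $\mu_d$ fixed in the scheme, as the paper does for singular $H$, or add the stability arguments that narrow convergence alone does not provide.
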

\begin{theorem}\label{teoexistence}
Let $a$ satisfy \eqref{cara1}, \eqref{cara2}, \eqref{cara3}, and let $\mu$ be a nonnegative bounded Radon measure which satisfies \eqref{hmu}. If $H$ satisfies \eqref{h} and \eqref{h1}, there exists a distributional solution $u$ to problem \eqref{pbmain} such that 
$$u^{p-1}\in L^q_{loc}(\Omega)\;\forall\,q<\frac{N}{N-p}\quad\text{and}\quad|\nabla u|^{p-1} \in L^q_{loc}(\Omega)\;\forall\,q<\frac{N}{N-1}.$$
\end{theorem}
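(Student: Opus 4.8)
The plan is to build the solution by approximation, in the spirit of \cite{dc,dcgop,mupo}, desingularizing both the nonlinearity and the datum. First I would replace $H$ by the bounded truncation $H_n(s):=\min\{H(s),n\}$ and $\mu$ by a sequence of smooth nonnegative functions $\mu_n$ with $\io\mu_n\le\|\mu\|_{\mathcal{M}(\Omega)}$ converging to $\mu$ in the narrow topology, respecting the diffuse/concentrated splitting $\mu=\mu_d+\mu_c$ as prepared in Section \ref{sec:maggiore0}. For each fixed $n$ the problem $-\operatorname{div}(a(x,\nabla u_n))=H_n(u_n)\mu_n$ has a bounded right-hand side, so a bounded nonnegative solution $u_n\in\sob\cap L^\infty(\Omega)$ exists by the Leray--Lions theory together with Schauder's fixed point theorem, and the strong maximum principle combined with \eqref{hmu} yields $u_n>0$.

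The decisive preliminary step, and the one I expect to be the main obstacle, is a \emph{uniform local lower bound}: for every $\omega\Subset\Omega$ there is $c_\omega>0$, independent of $n$, with $u_n\ge c_\omega$ on $\omega$. This is precisely what makes the singular term harmless locally, since $H$ is continuous and finite away from the origin and admits a finite limit at infinity by \eqref{h}, so that $H_n(u_n)\le\|H\|_{L^\infty([c_\omega,\infty))}$ on $\omega$. The bound rests on comparing $u_n$ from below with the solution of an auxiliary problem carrying a fixed nontrivial diffuse datum (here the assumption $\mu_d\not\equiv0$ of \eqref{hmu} is essential) together with a Harnack-type argument; the delicate part is its uniformity in $n$.

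Granting the lower bound, I would derive two families of a priori estimates. Testing the approximating equation with $T_k(u_n)^{\tau}$, $\tau=\max(1,\gamma)$ as in \eqref{troncate}, and using \eqref{cara1} and \eqref{h1} absorbs the singularity near $\{u_n\approx0\}$ (there $H_n(u_n)T_k(u_n)^{\tau}\le C$), yielding $\io|\nabla T_k^{\frac{\tau-1+p}{p}}(u_n)|^p\le C(1+k^{\tau})$ uniformly in $n$; passing to the limit this gives \eqref{troncate}. Locally, where $H_n(u_n)$ is bounded, the classical Boccardo--Gallou\"et estimates for measure data furnish uniform bounds for $u_n^{p-1}$ in the Marcinkiewicz space $M^{\frac{N}{N-p}}(\omega)$ and for $|\nabla u_n|^{p-1}$ in $M^{\frac{N}{N-1}}(\omega)$, whence the announced $L^q_{loc}$ regularity for every $q<\frac{N}{N-p}$, resp.\ $q<\frac{N}{N-1}$.

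Finally I would extract a subsequence with $u_n\to u$ a.e.\ and prove $\nabla u_n\to\nabla u$ a.e.\ by the standard monotonicity argument based on \eqref{cara3}, localized on compact sets. This identifies $a(x,\nabla u_n)\rightharpoonup a(x,\nabla u)$ weakly in $L^q_{loc}$, so the left-hand side of \eqref{distrdef} passes to the limit against $\varphi\in C^1_c(\Omega)$. For the right-hand side, the local lower bound and Lemma \ref{Orsina} give $\int_{\Omega}H_n(u_n)\varphi\,d\mu_{d,n}\to\int_{\Omega}H(u)\varphi\,d\mu_d$ on the diffuse part, while on the concentrated part one must show the contribution converges to $H(\infty)\int_{\Omega}\varphi\,d\mu_c$: here $u_n\to\infty$ on the zero-capacity set carrying $\mu_c$, so $H_n(u_n)\to H(\infty)$ there, and the cut-offs $\Psi_\nu$ of Lemma \ref{dalmaso} together with the triple-limit bookkeeping $\epsilon(n,r,\nu)$ are needed to make this rigorous. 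Combining these limits produces \eqref{distrdef}, so $u$ is a distributional solution in the sense of Definition \ref{distributional}. For $\gamma\le1$ the conclusion also follows directly from Theorem \ref{teoexrinuniqueness} and Lemma \ref{equivrindis}, so the genuinely new content lies in the case $\gamma>1$.
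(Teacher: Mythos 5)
Your overall architecture---double approximation with $H_n=T_n(H)$ and a smoothed datum, the uniform local lower bound by comparison with an auxiliary problem, the global estimate on $T_k^{(\tau-1+p)/p}(u_n)$, local Boccardo--Gallou\"et estimates, strong convergence of truncations with a.e.\ convergence of gradients, and the split treatment of diffuse and concentrated parts in the limit---is essentially the paper's. The genuine gap is in your passage to the limit in the diffuse term. You smooth the \emph{whole} measure $\mu$ (including $\mu_d$) and then claim that ``the local lower bound and Lemma \ref{Orsina} give $\io H_n(u_n)\varphi\,d\mu_{d,n}\to\io H(u)\varphi\,d\mu_d$''. Lemma \ref{Orsina} requires the densities to converge \emph{weakly in $L^1(\Omega)$}; but a measure that is diffuse with respect to the $p$-capacity need not be absolutely continuous with respect to Lebesgue measure (e.g.\ a surface measure on a smooth hypersurface is diffuse for every $p>1$), so smooth approximations of $\mu_d$ converging in the narrow topology do \emph{not} converge weakly in $L^1(\Omega)$, and the lemma is inapplicable. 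In the finite case (Section \ref{exgamma=0}) the paper avoids this by using the structured approximation $\mu_{n,d}=f_n-\operatorname{div}(F_n)$ of \cite{bgo} and proving that $H_j(u_n)S(u_n)\varphi$ is bounded in $W^{1,p}_0(\Omega)$, an argument that needs $C^1$ approximations $H_j$ of $H$ with $H_j'\in L^\infty\cap L^1$ and breaks down when $H$ blows up at the origin. This is exactly why the scheme \eqref{pbapproxeps} for $\gamma>0$ keeps $\mu_d$ \emph{exact} and approximates only the concentrated part; your proposal misses this design choice and the reason for it.

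Even if you adopt the scheme with $\mu_d$ kept fixed, your limiting argument still has a hole: dominated convergence for $\io H_n(u_{n,m})\varphi\,d\mu_d$ requires $H_n(u_{n,m})\to H(u)$ \emph{$\mu_d$-almost everywhere}, and since $\mu_d$ may be singular with respect to Lebesgue measure, the Lebesgue-a.e.\ convergence of $u_n$ and $\nabla u_n$ that you extract is not sufficient. The paper upgrades the strong convergence $T_k(u_{n,m})\to T_k(u)$ in $W^{1,p}_{loc}(\Omega)$ (Lemma \ref{stronghsing}) to cap$_p$-quasi-everywhere convergence via Lemma $3.5$ of \cite{kkm}, and is careful that the lower bounds \eqref{posunif} and \eqref{posunifu} hold cap$_p$-a.e.\ (not merely a.e., cf.\ the end of the proof of Lemma \ref{locpos}), so that $\|H\|_{L^{\infty}([c_{\supp(\varphi)},\infty))}\|\varphi\|_{L^\infty(\Omega)}$ is a legitimate dominating function in $L^1(\Omega,\mu_d)$. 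These capacity-theoretic steps are the actual crux of the case $\gamma>1$ and are absent from your outline. A minor further remark: within the paper's logical structure, deducing the case $\gamma\le1$ from Theorem \ref{teoexrinuniqueness} would be circular, since the proof of that theorem for $\gamma>0$ itself relies on Theorem \ref{teoexistence}.
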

\begin{remark}
From Theorems \ref{teoexrinuniqueness}, \ref{teoexistence} and Lemma \ref{equivrindis}, we deduce that, for any nonlinearity $H$ satisfying \eqref{h} and \eqref{h1} with $\gamma\le 1$, we are able to find a renormalized solution that is also a distributional one. Otherwise, if $H$ blows up too fast at the origin (i.e. $\gamma>1$ in \eqref{h1}), the solution loses the weak trace in the classical Sobolev sense and we are only able to prove the existence of a distributional solution. We underline that the renormalized framework seems to be the natural one associated to this kind of problems, since it is well posed with respect to uniqueness, at least in case of a non-increasing nonlinearity $H$.  
\end{remark}
\begin{remark}\label{linda}
As concerns the assumption \eqref{hmu}, we underline that, if $H(0)<\infty$, we can prove the existence of a renormalized solution to \eqref{pbmain} even if it results $\mu_d\equiv0$, since we never use that $\mu_d\not\equiv0$ in the proof of Theorem \ref{teoexistence} (cf. Section \ref{exgamma=0}). If instead $H(0)=\infty$, then we do not to treat the case $\mu_d\equiv0$ to avoid nonexistence results (in the approximation sense) analogous to the ones of Section \ref{sec:maggiore0} of \cite{do}.
\\Furthermore, in case $\mu_d\equiv0$, our notions of solution formally lead us to the following problem with linear lower order term
$$\begin{cases}
\displaystyle -\operatorname{div}(a(x,\nabla u)) = H(\infty)\mu_c &  \text{in}\, \Omega, \\
u=0 & \text{on}\ \partial \Omega,
\end{cases}$$
which could be analyzed using classical tools.
\end{remark}

\section{Proof of existence in case of a finite $H$}\label{exgamma=0}

We start proving the existence of a renormalized solution in case of a finite nonlinearity $H$, namely assuming $\gamma=0$ in \eqref{h1}. \\
\\We introduce the following scheme of approximation
\begin{equation}\begin{cases}
\displaystyle -\operatorname{div}(a(x,\nabla u_n)) = H(u_n)\mu_n &  \text{in}\, \Omega, \\
u_n=0 & \text{on}\ \partial \Omega,
\label{pbapproxlimitata}
\end{cases}\end{equation}
where $\mu_{n}=\mu_{n,d} + \mu_{n,c} = f_n -\operatorname{div}(F_n) + \mu_{n,c}$. Following \cite{bgo} we suppose that:
\begin{align}\label{approxmisura}
&0\le f_n \in L^{\infty}(\Omega),\quad f_n  \rightarrow f \text{ weakly in } L^1(\Omega),\nonumber\\ 
&F_n \in W^{1,\infty}_0(\Omega)^N,\quad F_n \rightarrow F \text{ in } L^{p'}(\Omega)^N,\\ \nonumber
&0\le \mu_{n,c} \in L^{\infty}(\Omega),\quad\mu_{n,c}  \rightarrow \mu_c \text{ in the narrow topology of } \mathcal{M}(\Omega).		
\end{align}
Moreover it results that $\|\mu_n\|_{L^1(\Omega)}\le C$. 
\\Since $H$ is a continuous function satisfying \eqref{h} and \eqref{h1} with $\gamma=0$ and $a$ satisfies \eqref{cara1}, \eqref{cara2} and \eqref{cara3} with $1<p<N$, the existence of a weak solution $u_n\in W^{1,p}_0(\Omega)\cap L^\infty(\Omega)$ is guaranteed by \cite{ll}. Furthermore, since $H$ and $\mu_n$ are nonnegative functions, we also have that $u_n$ is nonnegative. Taking $S(u_n)\varphi$ as test function in the weak formulation of \eqref{pbapproxlimitata} where $S\in W^{1,\infty}(\mathbb{R})$ and has compact support and $\varphi \in W^{1,p}_0(\Omega)\cap L^\infty(\Omega)$ we obtain
\begin{gather}
\int_{\Omega} a(x,\nabla u_n)\cdot\nabla \varphi S(u_n) + \int_{\Omega}a(x,\nabla u_n)\cdot\nabla u_nS'(u_n)\varphi = \int_{\Omega}H(u_n)S(u_n)\varphi \mu_n. \label{ren_fin}
\end{gather}
Moreover, since $a(x,\nabla u_n)\cdot \nabla u_n \in L^1(\Omega)$, we deduce
\begin{equation}\label{r0}
\displaystyle \lim_{t\to \infty} \frac{1}{t} \int_{\{t<u_n<2t\}} a(x,\nabla u_n)\cdot \nabla u_n \varphi =0\quad\forall \varphi \in C_b(\Omega),
\end{equation} 
namely $u_n$ is also a renormalized solution to \eqref{pbapproxlimitata}. We need some a priori estimates on $u_n$.
\begin{lemma}\label{lemmastime}
Let $u_n$ be a solution to \eqref{pbapproxlimitata}. Then $T_k(u_n)$ is bounded in $W^{1,p}_0(\Omega)$ for every fixed $k>0$. Moreover:
\begin{itemize}
\item[i)] if $p>2-\frac{1}{N}$, $u_n$ is bounded in $W_0^{1,q}(\Omega)$ for every $q<\frac{N(p-1)}{N-1}$;
\item[ii)] if $1<p\le 2-\frac{1}{N}$, $u_n^{p-1}$ is bounded in $L^q(\Omega)$ for every $q<\frac{N}{N-p}$ and $|\nabla u_n|^{p-1}$ is bounded in $L^q(\Omega)$ for every $q<\frac{N}{N-1}$.	
\end{itemize}
Finally $u_n$ converges almost everywhere in $\Omega$ to a function $u$, which is cap$_p$-almost everywhere finite and cap$_p$-quasi continuous.
\end{lemma}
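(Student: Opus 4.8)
The plan is to reduce everything to the classical Boccardo--Gallou\"et estimates, which is possible because here $\gamma=0$ forces $H$ to be bounded. Indeed, by \eqref{h1} with $\gamma=0$ one has $H(s)\le C$ for $s<s_0$, by \eqref{h} $H$ is bounded near infinity, and by continuity $H$ is bounded on $[s_0,\infty)$; hence $0<H(s)\le M$ for some $M>0$. Since $u_n\ge0$, $\mu_n\ge0$ and $\|\mu_n\|_{L^1(\Omega)}\le C$, the right-hand side $H(u_n)\mu_n$ is nonnegative and bounded in $L^1(\Omega)$ by $MC$, uniformly in $n$. Taking $T_k(u_n)$ as test function in the weak formulation of \eqref{pbapproxlimitata} and using \eqref{cara1} together with $0\le T_k(u_n)\le k$, I would obtain
$$\alpha\io|\nabla T_k(u_n)|^p\le \io a(x,\nabla u_n)\cdot\nabla T_k(u_n)=\io H(u_n)T_k(u_n)\mu_n\le MCk,$$
which proves that $T_k(u_n)$ is bounded in $\sob$ for every fixed $k>0$.

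From this basic estimate I would extract the two Marcinkiewicz-type bounds. Applying the Sobolev inequality to $T_k(u_n)$ and using $\{u_n>k\}\subseteq\{T_k(u_n)=k\}$ gives $k^{p^*}|\{u_n>k\}|\le \|T_k(u_n)\|_{L^{p^*}}^{p^*}\le (Ck)^{p^*/p}$, hence $|\{u_n>k\}|\le Ck^{-\frac{N(p-1)}{N-p}}$, i.e. $u_n^{p-1}$ is bounded in the Marcinkiewicz space $M^{\frac{N}{N-p}}(\Omega)$. For the gradient I would split, for $\lambda,k>0$,
$$|\{|\nabla u_n|>\lambda\}|\le |\{|\nabla u_n|>\lambda,\ u_n\le k\}|+|\{u_n>k\}|\le \frac{Ck}{\lambda^p}+Ck^{-\frac{N(p-1)}{N-p}},$$
and optimize in $k$; the two terms balance at the exponent $\frac{N(p-1)}{N-1}$, yielding $|\{|\nabla u_n|>\lambda\}|\le C\lambda^{-\frac{N(p-1)}{N-1}}$, so that $|\nabla u_n|^{p-1}$ is bounded in $M^{\frac{N}{N-1}}(\Omega)$.

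The dichotomy in the statement is then exactly the threshold at which the exponent $\frac{N(p-1)}{N-1}$ crosses $1$: one checks $p>2-\frac1N\iff \frac{N(p-1)}{N-1}>1$. When $p>2-\frac1N$, the embedding $M^{s}(\Omega)\hookrightarrow L^{q}(\Omega)$ for $q<s$ (valid since $|\Omega|<\infty$) turns the Marcinkiewicz bound on $|\nabla u_n|$ into boundedness of $\nabla u_n$ in $L^q$ for $q<\frac{N(p-1)}{N-1}$; together with the corresponding control of $u_n$ (from the level-set estimate, noting $\frac{N(p-1)}{N-p}>\frac{N(p-1)}{N-1}$) this gives the bound in $\sob$ for $W^{1,q}_0(\Omega)$ of item i). When $1<p\le 2-\frac1N$ the exponent is $\le1$ and no Sobolev bound of order $>1$ is available; I would then merely read off, via the same embedding, that $u_n^{p-1}$ is bounded in $L^q$ for $q<\frac{N}{N-p}$ and $|\nabla u_n|^{p-1}$ in $L^q$ for $q<\frac{N}{N-1}$, which is item ii).

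Finally, for the almost everywhere convergence I would show that $\{u_n\}$ is Cauchy in measure: for $\delta,\eta>0$ one has $|\{|u_n-u_m|>\delta\}|\le |\{u_n>k\}|+|\{u_m>k\}|+|\{|T_k(u_n)-T_k(u_m)|>\delta\}|$, and choosing $k$ large makes the first two terms small uniformly in $n,m$ by the level-set estimate, while for fixed $k$ the boundedness of $T_k(u_n)$ in $\sob$ gives, up to a subsequence, strong $L^p$ convergence and hence kills the last term. A diagonal argument over $k$ produces a subsequence converging a.e. to a measurable $u$. Since $T_k(u_n)\rightharpoonup T_k(u)$ in $\sob$, lower semicontinuity yields $\io|\nabla T_k(u)|^p\le C(k+1)$ for all $k$, and Lemma \ref{dalmaso2} then gives that $u$ is cap$_p$-almost everywhere finite and admits a cap$_p$-quasi continuous representative. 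I expect the most delicate point to be this last step — upgrading the weak compactness of the truncations to genuine a.e. convergence of $u_n$ and to the capacitary finiteness — since the Sobolev estimates alone only control $T_k(u_n)$ level by level.
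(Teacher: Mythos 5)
Your proof is correct and follows essentially the same route as the paper: testing with $T_k(u_n)$ to get $\alpha\int_\Omega|\nabla T_k(u_n)|^p\le Ck$, deducing Marcinkiewicz-type bounds on $u_n$ and $|\nabla u_n|$, splitting at the threshold $p=2-\frac{1}{N}$, obtaining a.e.\ convergence via the Cauchy-in-measure decomposition through truncations, and concluding cap$_p$-a.e.\ finiteness and quasi continuity from weak lower semicontinuity together with Lemma \ref{dalmaso2}. The only (harmless) differences are that you re-derive the level-set and gradient estimates by hand (Sobolev plus Chebyshev plus optimization in $k$) where the paper cites Subsection II.4 of \cite{bg} and Lemmas 4.1--4.2 of \cite{b6}, and that you run the Cauchy-in-measure argument in both cases, whereas the paper uses weak $W^{1,q}_0(\Omega)$ compactness directly when $p>2-\frac{1}{N}$.
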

\begin{proof}
We take $T_{k}(u_n)$ in the weak formulation of \eqref{pbapproxlimitata} obtaining
$$\int_\Omega a(x,\nabla T_{k}(u_n))\cdot \nabla T_k(u_n) =\int_\Omega H(u_n)T_{k}(u_n) \mu_n.$$
Then, using (\ref{cara1}) and \eqref{approxmisura}, we find
\begin{equation}\label{Tkunif}\alpha\int_\Omega |\nabla T_{k}(u_n)|^p \leq k\|H\|_{L^{\infty}(\RR)}\|\mu_n\|_{L^1(\Omega)}\leq Ck,
\end{equation}
namely $T_{k}(u_n)$ is bounded in $W_0^{1,p}(\Omega)$ with respect to $n$. 
\\Then, if $p>2-\frac{1}{N}$, by the computations of Subsection II.4 in \cite{bg}, it follows that ${u_n}$ is bounded in $W_0^{1,q}(\Omega)$ for every $q<\frac{N(p-1)}{N-1}$. So there exists a nonnegative function $u$ belonging to $W_0^{1,q}(\Omega)$ for every $q<\frac{N(p-1)}{N-1}$ such that $u_n$ converges to $u$ almost everywhere in $\Omega$ and weakly in $W_0^{1,q}(\Omega)$ for every $q<\frac{N(p-1)}{N-1}$.
\\Otherwise, if $1<p\leq 2-\frac{1}{N}$, it results that $0<\frac{N(p-1)}{N-1}\leq1$ and we cannot proceed as before. Anyway, from \eqref{Tkunif}, using Lemma $4.1$ and Lemma $4.2$ of \cite{b6} we deduce that $u_n$ is bounded in the Marcinkiewicz space $M^{\frac{N(p-1)}{N-p}}(\Omega)$ and that $|\nabla u_n|$ is bounded in the Marcinkiewicz space $M^{\frac{N(p-1)}{N-1}}(\Omega)$. In particular $u_n^{p-1}$ is bounded in $L^q(\Omega)$ for every $q<\frac{N}{N-p}$ and $|\nabla u_n|^{p-1}$ is bounded in $L^q(\Omega)$ for every $q<\frac{N}{N-1}$. Furthermore, by \eqref{Tkunif}  we deduce that $T_k(u_n)$ is a Cauchy sequence in $L^p(\Omega)$ for all $k>0$, so that, up to subsequences, it is a Cauchy sequence in measure for each $k>0$. Then, using the Marcinkiewicz estimates on $u_n$, we find that $u_n$ is a Cauchy sequence in measure. To prove this property we begin by observing that for all $k,\sigma>0$ and for all $n,m\in\NN$, it results that
\begin{equation}\label{divido}
\{|u_n-u_m|>\sigma\}\subseteq\{|u_n|\geq k\}\cup\{|u_m|\geq k\}\cup\{|T_k(u_n)-T_k(u_m)|>\sigma\}.
\end{equation}
Now, if $\varepsilon>0$ is fixed, the Marcinkiewicz estimates imply that there exists a $\overline{k}>0$ such that 
$$\left|\{|u_n|> k\}\right|<\frac{\varepsilon}{3},\;\;\left|\{|u_m|> k\}\right|<\frac{\varepsilon}{3}\;\forall n,m\in\NN,\;\forall k>\overline{k},$$
while, using that $T_{k}(u_{n})$ is a Cauchy sequence in measure for each $k>0$ fixed, we deduce that there exists $\eta_{\varepsilon}>0$ such that
$$\left|\{|T_k(u_n)-T_k(u_m)\right|>\sigma\}|<\frac{\varepsilon}{3}\;\forall n,m>\eta_{\varepsilon},\;\forall\sigma>0.$$
Thus, if $k>\overline{k}$, from \eqref{divido} we obtain that 
$$\left|\{|u_n-u_m|>\sigma\}\right|<\varepsilon\quad\forall n,m\geq\eta_{\varepsilon},\;\forall\sigma>0,$$ 
and so that $u_n$ is a Cauchy sequence in measure. Then, in case $1<p\leq 2-\frac{1}{N}$, there exists a nonnegative measurable function $u:\Omega\to\RR$  to which $u_n$ converges almost everywhere in $\Omega$. Since $u_n^{p-1}$ is bounded in $L^q(\Omega)$ for every $q<\frac{N}{N-p}$, thanks to the almost everywhere convergence and Vitali's Theorem, we find that $u^{p-1}\in L^q(\Omega)$ for every $q<\frac{N}{N-p}$. This implies that the limit function $u$ is almost everywhere finite. 
\\Hence, in all cases, it results
\begin{equation}\label{weakconv}
T_k(u_n) \rightarrow T_k(u) \text{ weakly in } W^{1,p}_0(\Omega) \text{ for every } k>0 \text{ and a.e.} \text{ in } \Omega.
\end{equation}
\\Finally, thanks to \eqref{Tkunif}, by weak lower semicontinuity we deduce
\begin{equation*}\label{Tk}
\io |\nabla T_k(u)|^p\leq C(k+1)\quad\forall k>0,
\end{equation*}
and so, by the previous and Lemma \ref{dalmaso2}, we conclude that the function $u$ is cap$_p$-almost everywhere finite and cap$_p$-quasi continuous. 
\end{proof}
The previous lemma guarantees only the weak convergence of $T_k(u_n)$ towards $T_k(u)$ in $W^{1,p}_0(\Omega)$. In the next lemma we prove the strong convergence of truncations in $W^{1,p}_0(\Omega)$, which, in turn, will assure the almost everywhere convergence of $\nabla u_n$ to $\nabla u$ in $\Omega$.
\begin{lemma}\label{stronghlim}
Let $u_n$ be a solution to \eqref{pbapproxlimitata}. Then $T_k(u_n)$ converges to $T_k(u)$ in $W^{1,p}_0(\Omega)$ for every fixed $k>0$.
\end{lemma}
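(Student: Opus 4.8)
The plan is to reduce the statement to showing that the nonnegative quantity
$$D_n:=\io\big(a(x,\nabla T_k(u_n))-a(x,\nabla T_k(u))\big)\cdot\big(\nabla T_k(u_n)-\nabla T_k(u)\big)$$
tends to $0$ as $n\to\infty$. Once this is granted, the strict monotonicity \eqref{cara3} forces the integrand to converge to $0$ almost everywhere along a subsequence, whence a standard Leray--Lions pointwise argument yields $\nabla T_k(u_n)\to\nabla T_k(u)$ a.e. in $\Omega$; combined with the uniform bound \eqref{Tkunif} and the convergence of the energies $\io a(x,\nabla T_k(u_n))\cdot\nabla T_k(u_n)$ that follows from $D_n\to0$, Vitali's theorem upgrades this to the strong convergence of $T_k(u_n)$ in $\sob$.

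To control $D_n$ I would test the weak formulation of \eqref{pbapproxlimitata} with $z_n:=T_k(u_n)-T_k(u)\in\sob\cap L^\infty(\Omega)$. Using $\nabla T_k(u_n)=\nabla u_n\chi_{\{u_n\le k\}}$ and the fact that $a(x,0)=0$ (a consequence of \eqref{cara2}), one rearranges the left-hand side into
$$\io a(x,\nabla T_k(u_n))\cdot\nabla z_n=\io H(u_n)z_n\,d\mu_n+\int_{\{u_n>k\}}a(x,\nabla u_n)\cdot\nabla T_k(u).$$
Since $\io a(x,\nabla T_k(u))\cdot\nabla z_n\to0$ by the weak convergence in \eqref{weakconv}, proving $D_n\to0$ reduces to showing that, in the limit, both the datum term $\io H(u_n)z_n\,d\mu_n$ and the mismatch term $P_n:=\int_{\{u_n>k\}}a(x,\nabla u_n)\cdot\nabla T_k(u)$ are negligible.

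For the datum I would split $\mu_n=\mu_{n,d}+\mu_{n,c}$ as in \eqref{approxmisura}. The diffuse contribution $\io H(u_n)z_n\,d\mu_{n,d}$ is handled through the decomposition $\mu_{n,d}=f_n-\operatorname{div}(F_n)$: since $H$ is bounded and $z_n\to0$ almost everywhere with $|z_n|\le2k$, the part tested against $f_n$ vanishes by the weak $L^1$ convergence of $f_n$ and Lemma \ref{Orsina}, while the part involving $F_n$ is absorbed using the strong $L^{p'}$ convergence of $F_n$ together with the weak convergence of the truncated gradients. The concentrated contribution $\io H(u_n)z_n\,d\mu_{n,c}$ is the delicate one: here I would insert the cut-off $\Psi_\nu$ of Lemma \ref{dalmaso}, writing $z_n=z_n\Psi_\nu+z_n(1-\Psi_\nu)$. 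The piece supported away from the concentration set feels, in the limit, only the diffuse measure and is treated as above, whereas the piece localized near the concentration set is absorbed via $\|\Psi_\nu\|_{\sob}\to0$ and the narrow convergence $\mu_{n,c}\to\mu_c$, the whole term disappearing in the iterated limit $\limsup_{\nu\to0}\limsup_{n\to\infty}$.

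I expect the main obstacle to be the appearance of the untruncated gradient $\nabla u_n$, both in the mismatch term $P_n$ and through the concentrated part of the datum, since by Lemma \ref{lemmastime} $|\nabla u_n|^{p-1}$ is only bounded in $L^q$ for $q<\frac{N}{N-1}$, so its conjugate exponent exceeds $p$ and $\nabla u_n$ cannot be paired with $\nabla T_k(u)\in L^p(\Omega)^N$ by a crude H\"older estimate. To bypass this I would work with the renormalized formulation \eqref{ren_fin} of the approximating problems, repeating the rearrangement above with the localized test function $z_n\theta_t(u_n)$: the cut-off $\theta_t$ confines the gradient to the region $\{u_n<2t\}$, where $\nabla T_{2t}(u_n)$ is bounded in $L^{p'}(\Omega)^N$ and may be paired with $\nabla T_k(u)\chi_{\{u_n>k\}}\to0$ in $L^p(\Omega)^N$, while the resulting correction $\frac1t\int_{\{t<u_n<2t\}}a(x,\nabla u_n)\cdot\nabla u_n\,z_n$ is exactly the energy escaping to infinity, controlled through \eqref{r0} and rendered negligible in the iterated limit $n\to\infty$, then $t\to\infty$. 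Orchestrating these estimates together with the $\Psi_\nu$-localization of the concentrated measure, and passing to the limit in the correct order $n\to\infty$, $t\to\infty$, $\nu\to0$, is the technical heart of the argument.
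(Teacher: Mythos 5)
Your overall skeleton --- reduce the lemma to $D_n\to0$, then conclude by monotonicity, a.e.\ convergence of gradients and Vitali --- is exactly the paper's (the paper delegates that final step to \cite[Lemma 5]{bmp}), and your treatment of the diffuse datum and of the mismatch term is in the spirit of the paper's terms (a), (c), (d) in \eqref{1-4.1}. But the proposal has a genuine gap at both points where the concentrated part $\mu_{n,c}$ enters, and the mechanisms you invoke there do not work. The main one is the correction term $\frac1t\int_{\{t<u_n<2t\}}a(x,\nabla u_n)\cdot\nabla u_n\,z_n$: it cannot be ``controlled through \eqref{r0}'' in your iterated limit ($n\to\infty$ first, then $t\to\infty$), because \eqref{r0} is a statement for each \emph{fixed} $n$ (it follows from $a(x,\nabla u_n)\cdot\nabla u_n\in L^1(\Omega)$ for that $n$) and carries no uniformity in $n$. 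Your scheme forces you to keep $t$ fixed while $n\to\infty$ --- that is what makes $a(x,\nabla u_n)\theta_t(u_n)$ bounded in $L^{p'}(\Omega)^N$ for the mismatch pairing --- and, for fixed $t$, the measures $\frac1t\,a(x,\nabla u_n)\cdot\nabla u_n\chi_{\{t<u_n<2t\}}$ are not asymptotically small: in the double limit they carry a mass of order $H(\infty)\mu_c$ near the set where $\mu_c$ is concentrated, which is precisely the phenomenon recorded by \eqref{ren2} for the limit solution. So when $\mu_c\not\equiv0$ your correction term has no reason to vanish; the weight $z_n=k-T_k(u)$ on $\{u_n>t\}$ does not rescue it, since exploiting that weight would require knowing already that $u$ blows up $\mu_c$-almost everywhere, which is part of what is being proved. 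Sending $t\to\infty$ first, so that \eqref{r0} applies, merely restores the identity you started from, with the ill-defined pairing $\int_{\{u_n>k\}}a(x,\nabla u_n)\cdot\nabla T_k(u)$ back in place. The paper escapes this circle by putting the cut-off \emph{inside} the test function from the start, $\varphi=(T_k(u_n)-T_k(u))(1-\Psi_\nu)$ with $S=\theta_r$, and then proving the separate, uniform-in-$n$ estimate \eqref{con0} for the energy at infinity away from the concentration set, via the additional test pair $\varphi=\pi_r(u_n)(1-\Psi_\nu)$, $S=\theta_t$; there \eqref{r0} is used only in the auxiliary limit $t\to\infty$ at fixed $n,r$, and the surviving measure terms are handled by Lemma \ref{Orsina} (diffuse part) and by narrow convergence plus Lemma \ref{dalmaso} (concentrated part). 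Nothing in your outline supplies this estimate.

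The second gap is your claim that $\io H(u_n)z_n\Psi_\nu\mu_{n,c}$ is ``absorbed via $\|\Psi_\nu\|_{\sob}\to0$ and the narrow convergence.'' This gives nothing: $\Psi_\nu\equiv1$ on $K_\nu$ and $\mu_c$ gives $K_\nu$ almost full mass by Lemma \ref{dalmaso}, so $\lim_{n\to\infty}\io\Psi_\nu\mu_{n,c}=\io\Psi_\nu\,d\mu_c\ge\mu_c(\Omega)-\nu$; essentially the whole mass of $\mu_{n,c}$ sits where $\Psi_\nu=1$, and smallness of $\Psi_\nu$ in the $\sob$ norm is invisible to a crude pairing against a measure. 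That smallness can only be exploited \emph{after} using the equation: the paper tests with $(k-u_n)^+\Psi_\nu$, converting the measure term into the gradient term $\io a(x,\nabla T_k(u_n))\cdot\nabla\Psi_\nu(k-u_n)^+\le k\|T_k(u_n)\|_{\sob}\|\Psi_\nu\|_{\sob}=\epsilon(n,\nu)$, which yields \eqref{u3} and \eqref{u4}. Note also that your $z_n$ is not dominated by $(k-u_n)^+$ (on $\{u_n\ge k\}$ one has $z_n=k-T_k(u)$, in general nonzero), so \eqref{u4} does not cover your term as it stands; in the paper this problem never arises because the factor $(1-\Psi_\nu)$ in the test function prevents any datum term from living on the $\Psi_\nu$-region, and the only term left there is the gradient one in \eqref{tesiq}, disposed of by \eqref{u3}. (A smaller point: pairing $H(u_n)z_n$ against $-\operatorname{div}(F_n)$ would require differentiating $H(u_n)$, illegitimate for $H$ merely continuous; one must first pull out $\|H\|_{L^\infty(\RR)}$ using $\mu_{n,d}\ge0$, as in \eqref{C.1}.) In short, the strategy is right, but both interactions with $\mu_{n,c}$ need the paper's equation-based localization, and the shortcuts you propose for them fail.
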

\begin{proof}
We follow the lines of Step $2$ of the proof of Theorem $2.10$ in \cite{mupo}. We want to show that 
\begin{equation}\label{strong}
\lim_{n\rightarrow\infty}\io\big(a(x,\nabla T_{k}(u_{n}))-a(x,\nabla T_{k}(u))\big)\cdot\nabla(T_{k}(u_{n})-T_{k}(u))=0
\end{equation}
in order to apply \cite[Lemma $5$]{bmp} and to conclude the proof.
\\In \eqref{ren_fin} we take $\varphi=(T_{k}(u_{n})-T_{k}(u))(1-\Psi_{\nu})$ and $S=\theta_r$, where $r>k$ and $\Psi_{\nu}$ is as in Lemma \ref{dalmaso}, obtaining
\begin{align}\label{1-4.1}
&\io a(x,\nabla T_k(u_{n}))\cdot\nabla(T_{k}(u_{n})-T_{k}(u))(1-\Psi_{\nu})\notag\\
&=-\int_{\{k<u_n<2r\}}a(x,\nabla u_{n})\cdot\nabla(T_{k}(u_{n})-T_{k}(u))\theta_r(u_n)(1-\Psi_{\nu}) \ \ \  (\rm{a}) \nonumber\\
&+\frac{1}{r}\int_{\{r<u_n<2r\}}a(x,\nabla u_{n})\cdot\nabla u_{n}(T_{k}(u_{n})-T_{k}(u))(1-\Psi_{\nu}) \ \ \ (\rm{b})\\
&+\io H(u_n)\theta_r(u_n)(T_{k}(u_{n})-T_{k}(u))(1-\Psi_{\nu})\mu_n \ \ \ (\rm{c})\nonumber\\
&+\io a(x,\nabla u_{n})\cdot\nabla\Psi_{\nu}(T_{k}(u_{n})-T_{k}(u))\theta_r(u_n).\notag\ \ \ (\rm{d})\nonumber
\end{align}
For $(\rm{a})$, we note that the term $\dis\{a(x,\nabla u_{n})\theta_r(u_n)\}$ is bounded in $L^{p'}(\Omega)^N$ with respect to $n$. Moreover we have that $\dis |\nabla T_k(u)|\chi_{\{u_n>k\}}$ converges to zero in $L^p(\Omega)$, which allows us to deduce that
\begin{equation}\label{A}
(\textrm{a})\le C\io|a(x,\nabla u_{n})\theta_r(u_n)\|\nabla T_{k}(u)|\chi_{\{u_n>k\}}=\epsilon(n).
\end{equation}
In the same way, we observe that $\dis\{a(x,\nabla u_{n})\cdot\nabla\Psi_{\nu}\theta_r(u_n)\}$ is bounded in $L^{p'}(\Omega)$ and that, by \eqref{weakconv}, $T_k(u_n)$ strongly converges to $T_k(u)$ in $L^p(\Omega)$, and so we arrive to
\begin{equation}\label{D}
(\textrm{d})\le\io |a(x,\nabla u_{n})\cdot\nabla\Psi_{\nu}\theta_r(u_n)\|(T_{k}(u_{n})-T_{k}(u))|=\epsilon(n).
\end{equation}
Now we focus on $(\rm{c})$, finding, by \eqref{approxmisura}, that
\begin{equation}\label{C.1}\begin{aligned}
(\textrm{c})&\leq\|H\|_{L^\infty(\RR)}\io|T_k(u_n)-T_k(u)|\mu_{n,d} \\ 
&+\io H(u_n)\theta_r(u_n)(T_k(u_n)-T_k(u))(1-\Psi_\nu)\mu_{n,c}.
\end{aligned}\end{equation}
Since $T_k(u_n)-T_k(u)$ is bounded in $W_0^{1,p}(\Omega) \cap L^\infty(\Omega)$ and converges to zero almost everywhere in $\Omega$, by Lemma \ref{Orsina}, the first term of the right hand side of \eqref{C.1} converges to zero as $n$ goes to infinity. As regards the second term we have that 
$$\io H(u_n)\theta_r(u_n)(T_k(u_n)-T_k(u))(1-\Psi_\nu)\mu_{n,c}\leq 2k\|H\|_{L^\infty(\RR)}\io (1-\Psi_\nu)\mu_{n,c},$$
which, through the narrow convergence of $\mu_{n,c}$ to $\mu_c$ and Lemma \ref{dalmaso}, implies
\begin{equation}\label{C}
(\textrm{c})\leq\epsilon(n,r,\nu).
\end{equation}
Gathering \eqref{A}, \eqref{D}, \eqref{C} in \eqref{1-4.1} we deduce
\begin{equation}\label{1-4.2}\begin{aligned}
&\io a(x,\nabla T_k(u_{n}))\cdot\nabla(T_{k}(u_{n})-T_{k}(u))(1-\Psi_{\nu})\\
&\le\epsilon(n,r,\nu)+\frac{2k}{r}\int_{\{r<u_n<2r\}}a(x,\nabla u_{n})\cdot\nabla u_{n}(1-\Psi_{\nu}).
\end{aligned}\end{equation}
Let us take $\varphi=\pi_r(u_n)(1-\Psi_{\nu})$ and $S=\theta_t$ in \eqref{ren_fin}, where $r,k,t\in\NN$, $r>k$, and $\pi_r(s)$ is given by \eqref{pi}. It results
\begin{equation}\label{Bq}\begin{aligned}
&\frac{1}{r}\int_{\{r<u_n<2r\}}a(x,\nabla u_{n})\cdot\nabla u_{n}\theta_t(u_n)(1-\Psi_{\nu})\\
&=\frac{1}{t}\int_{\{t<u_n<2t\}}a(x,\nabla u_{n})\cdot\nabla u_{n}\pi_r(u_n)(1-\Psi_{\nu})\ \ \  (\rm{a'}) \\
&+\io H(u_n)\pi_r(u_n)\theta_t(u_n)(1-\Psi_{\nu})\mu_n\ \ \  (\rm{b'})\\
&+\io a(x,\nabla u_{n})\cdot\nabla\Psi_{\nu}\pi_r(u_n)\theta_t(u_n).\ \ \  (\rm{c'})
\end{aligned}\end{equation}
As regards $(\rm{c'})$, thanks to Lebesgue Theorem, it results
\begin{equation*}\label{senzam}
\lim_{t\to\infty}\io a(x,\nabla u_{n})\cdot\nabla\Psi_\nu\pi_r(u_n)\theta_t(u_n)=\io a(x,\nabla u_{n})\cdot\nabla\Psi_\nu\pi_r(u_n).
\end{equation*}
Recalling that $\supp (\pi_r(s))=\{|s|\geq r\}$, that $u$ is almost everywhere finite and $|\nabla u_n|^{p-1}$ is bounded in $L^q(\Omega)$ for each $q<\frac{N}{N-1}$, then it follows from the H\"older inequality with exponents $q$ and $q'$, where $q<\frac{N}{N-1}$ is fixed, that
\begin{align*}
\left|\io a(x,\nabla u_{n})\cdot\nabla\Psi_\nu\pi_r(u_n)\right|&\leq\|\nabla\Psi_\nu\|_{L^\infty(\Omega)}\left(\io|\nabla u_n|^{(p-1)q}\right)^{\frac{1}{q}}\left|\{u_n\geq r\}\right|^\frac{1}{q'}\\
&\leq C\,\left|\{u_n\geq r\}\right|^\frac{1}{q'}=\epsilon(n,r),
\end{align*}
which implies 
\begin{equation}\label{Bq3}(\textrm{c}')\leq \epsilon(t,n,r).\end{equation}
As concerns $(\rm{b'})$ we have
\begin{equation}\label{Bq2}
\io H(u_n)\pi_r(u_n)\theta_t(u_n)(1-\Psi_{\nu})(\mu_{n,d}+\mu_{n,c})\le\|H\|_{L^{\infty}(\RR)}\io \pi_r(u_n)(1-\Psi_{\nu})(\mu_{n,d}+\mu_{n,c}).
\end{equation}
Finally we consider $(\textrm{a}')$. Letting $t$ go to infinity and recalling \eqref{r0}, we obtain
\begin{equation}\label{Bq1}\begin{aligned}
&\lim_{t\rightarrow\infty}\frac{1}{t}\int_{\{t<u_n<2t\}}a(x,\nabla u_{n})\cdot\nabla u_{n}\pi_r(u_n)(1-\Psi_{\nu})\\
&\le \lim_{t\rightarrow\infty}\frac{1}{t}\int_{\{t<u_n<2t\}}a(x,\nabla u_{n})\cdot\nabla u_{n}=0.
\end{aligned}\end{equation}
As $t$ goes to infinity in \eqref{Bq} and, by \eqref{Bq3}, \eqref{Bq2}, \eqref{Bq1}, we obtain
\begin{equation}\label{Bpenult}
\frac{1}{r}\int_{\{r<u_n<2r\}}a(x,\nabla u_{n})\cdot\nabla u_{n}(1-\Psi_{\nu})\le\epsilon(n,r)+\|H\|_{L^{\infty}(\RR)}\io \pi_r(u_n)(1-\Psi_{\nu})(\mu_{n,d}+\mu_{n,c}).\nonumber
\end{equation}
Since $\pi_r(u_n)$ converges to its almost everywhere limit weakly$^*$ in $L^{\infty}(\Omega)$ and weakly in $W^{1,p}_0(\Omega)$, we deduce, by Lemma \ref{Orsina}, that
$$\lim_{n\rightarrow\infty}\io \pi_r(u_n)(1-\Psi_{\nu})\mu_{n,d}=\io \pi_r(u)(1-\Psi_{\nu})d\mu_d.$$
As $u$ is $\operatorname{cap}_p$ almost everywhere finite, $\pi_r(u)$ converges to zero $\mu_d$-almost everywhere as $r\rightarrow\infty$; then, using Lebesgue Theorem for general measure, we obtain that 
$$\io \pi_r(u)(1-\Psi_{\nu})d\mu_d=\epsilon(r,\nu).$$
Moreover it follows from the narrow convergence of $\mu_{n,c}$ to $\mu_c$ and from Lemma \ref{dalmaso} that
$$\lim_{n\rightarrow\infty}\io \pi_r(u_n)(1-\Psi_{\nu})\mu_{n,c}\le\lim_{n\rightarrow\infty}\io(1-\Psi_{\nu})\mu_{n,c}=\io (1-\Psi_{\nu})d\mu_c\le C\nu.$$
Thus we obtain
\begin{equation}\label{con0}
\frac{1}{r}\int_{\{r<u_n<2r\}}a(x,\nabla u_{n})\cdot\nabla u_{n}(1-\Psi_{\nu})\le\epsilon(n,r,\nu),
\end{equation}
and then, going back to \eqref{1-4.2}, we conclude that
$$\io a(x,\nabla T_k(u_{n}))\cdot\nabla(T_{k}(u_{n})-T_{k}(u))(1-\Psi_{\nu})\le\epsilon(n,r,\nu).$$
Now we reason as follows
\begin{align}\label{tesiq}
&\io \big(a(x,\nabla T_k(u_{n}))-a(x,\nabla T_k(u))\big)\cdot\nabla(T_{k}(u_{n})-T_{k}(u))\nonumber\\
&=\io\big(a(x,\nabla T_k(u_{n}))-a(x,\nabla T_k(u))\big)\cdot\nabla(T_{k}(u_{n})-T_{k}(u))\Psi_{\nu}\nonumber\\
&+\io a(x,\nabla T_k(u_{n}))\cdot\nabla(T_{k}(u_{n})-T_{k}(u))(1-\Psi_{\nu})\\
&-\io a(x,\nabla T_k(u))\cdot\nabla(T_{k}(u_{n})-T_{k}(u))(1-\Psi_{\nu})\nonumber\\
&\le C\io\Big(|\nabla T_k(u_n)|^p+|\nabla T_k(u)|^p\Big)\Psi_{\nu}+\epsilon(n,r,\nu)\nonumber.
\end{align}
Now choosing as test function $(k-u_{n})^+\Psi_{\nu}$ in the weak formulation \eqref{pbapproxlimitata} we have
\begin{align*}
&-\io a(x,\nabla T_k(u_{n}))\cdot\nabla T_{k}(u_{n})\Psi_{\nu} + \io a(x,\nabla T_{k}(u_{n}))\cdot\nabla\Psi_{\nu} (k-u_{n})^+\\
&=\io H(u_{n})(k-u_{n})^+\Psi_{\nu}\mu_{n,d} + \io H(u_{n})(k-u_{n})^+\Psi_{\nu}\mu_{n,c},
\end{align*}
which implies, using $\mu_{n,d}\geq 0$ and \eqref{cara1},
\begin{align}\label{u1}
&\alpha\io |\nabla T_{k}(u_{n})|^p\Psi_\nu + \io H(u_{n})(k-u_{n})^+\Psi_{\nu}\mu_{n,c}\leq \io a(x,\nabla T_{k}(u_{n}))\cdot\nabla\Psi_{\nu} (k-u_{n})^+.
\end{align}
Moreover, since $T_{k}(u_n)$ is bounded in $\dis W^{1,p}_0(\Omega)$, it follows by an application of the H\"older inequality and by Lemma \ref{dalmaso} that 
\begin{equation}\label{u2}
\io a(x,\nabla T_{k}(u_{n}))\cdot\nabla\Psi_{\nu} (k-u_{n})^+\leq k\|T_k(u_{n})\|_{W_0^{1,p}(\Omega)}\|\Psi_{\nu}\|_{W_0^{1,p}(\Omega)}\leq \epsilon(n,\nu).
\end{equation}
By \eqref{u1} and \eqref{u2} we obtain
\begin{equation} \label{u3}
\io |\nabla T_{k}(u_{n})|^p\Psi_\nu=\epsilon(n,\nu)
\end{equation}
and
\begin{equation}\label{u4}
\io H(u_{n})(k-u_{n})^+\Psi_{\nu}\mu_{n,c}=\epsilon(n,\nu).
\end{equation}
Finally, by \eqref{tesiq} and \eqref{u3}, we have
$$\io \big(a(x,\nabla T_k(u_{n}))-a(x,\nabla T_k(u))\big)\cdot\nabla(T_{k}(u_{n})-T_{k}(u))\leq \epsilon(n,r,\nu),$$
which is \eqref{strong} as desired. In conclusion it holds	
\begin{equation*}\label{strongconv}
T_k(u_n) \rightarrow T_k(u) \text{ strongly in } W^{1,p}_0(\Omega) \text{ for every fixed } k>0,
\end{equation*}
yielding also that $\nabla u_n$ converges almost everywhere in $\Omega$ to $\nabla u$.
\end{proof}
\begin{remark}\label{remarkconvergenze}
It follows from Lemma \ref{lemmastime} and Lemma \ref{stronghlim} that, if $p>2-\frac{1}{N}$, $u_n$ converges to $u$ strongly in $W_{0}^{1,q}(\Omega)$ for every $q<\frac{N(p-1)}{N-1}$. Otherwise, if $1<p\leq2-\frac{1}{N}$, $u_n^{p-1}$ converges to $u^{p-1}$ strongly in $L^q(\Omega)$ for every $q<\frac{N}{N-p}$ and $|\nabla u_n|^{p-1}$ converges to $|\nabla u|^{p-1}$ strongly in $L^q(\Omega)$ for every $q<\frac{N}{N-1}$. In all cases we have	
\begin{equation}\label{conva}
a(x,\nabla u_{n}) \rightarrow a(x,\nabla u) \text{ strongly in } L^{q}(\Omega)^N \text{ for every } q<\frac{N}{N-1}.
\end{equation}
\end{remark}
Now we are ready to prove Theorem \ref{teoexrinuniqueness} in case $\gamma=0$, namely when $H(0)<\infty$.
\begin{proof}[Proof of Theorem \ref{teoexrinuniqueness} in case $\gamma=0$]
In order to prove the existence part of the theorem we only need to show that $u$, almost everywhere limit of the solutions $u_n$ to \eqref{pbapproxlimitata}, is a renormalized solution to \eqref{pbmain}. Indeed we already know, by Lemma \ref{lemmastime}, that $T_k(u)\in W^{1,p}_0(\Omega)$. If $S \in W^{1,\infty}(\mathbb{R})$ with $\supp(S)\subset [-M,M]$ and $\varphi \in W^{1,p}_0(\Omega)\cap L^\infty(\Omega)$, taking $S(u_n)\varphi$ as test function in the weak formulation of \eqref{pbapproxlimitata} we obtain
\begin{align}\label{dif0}
\io a(x,\nabla u_{n})\cdot\nabla\varphi S(u_n) + \io a(x,\nabla u_{n})\cdot \nabla u_{n}S'(u_n)\varphi=\io H(u_{n})S(u_{n})\varphi\mu_n.
\end{align}	
It follows from Lemma \ref{stronghlim} that we have 
\begin{align*}
\lim_{n\to\infty} \io a(x,\nabla u_{n})\cdot \nabla u_{n}S'(u_n)\varphi&=\lim_{n\to\infty}\io a(x,\nabla T_{M}(u_{n}))\cdot \nabla T_{M}(u_{n})S'(T_{M}(u_n))\varphi \\
&=\io a(x,\nabla T_{M}(u))\cdot \nabla T_{M}(u)S'(T_{M}(u))\varphi \\
&= \io a(x,\nabla u)\cdot \nabla u S'(u)\varphi,
\end{align*}
and
\begin{align*}
\lim_{n\to\infty}\io a(x,\nabla u_{n})\cdot\nabla\varphi S(u_n)&=\lim_{n\to\infty}\io a(x,\nabla T_{M}(u_{n}))\cdot\nabla \varphi S(T_{M}(u_{n})) \\
&=\io a(x,\nabla T_{M}(u))\cdot\nabla \varphi S(T_{M}(u))\\
&=\io a(x,\nabla u)\cdot \nabla\varphi S(u). 
\end{align*}
Hence, in order to deduce \eqref{ren1}, we need to pass to the limit the right hand side of \eqref{dif0}. We split it as follows
\begin{align}\label{dif3}
\io H(u_{n})S(u_{n})\varphi\mu_{n}=\io H(u_{n})S(u_{n})\varphi\mu_{n,d}+\io H(u_{n})S(u_{n})\varphi\mu_{n,c},
\end{align}
treating the two terms in the right hand side of the previous separately.
\\Let $H_{j}(s)$ be a sequence of functions in $C^1(\RR^+)$ such that 
$$H_{j}'\in L^\infty(\RR^+)\cap L^{1}(\RR^+),\quad\dis \|H_j-H\|_{L^\infty(\RR^+)}\leq \frac{1}{j}.$$
Since $u$ is cap$_p$-quasi continuous, $H$, $H_j$ and $S$ are continuous and finite functions on $\RR$, then $H_j(u)S(u)\varphi$ and $H(u)S(u)\varphi$ are $\mu_d$-measurable. Then we have
\begin{align}\label{hj}
&\left|\io H(u_{n})S(u_{n})\varphi\mu_{n,d}-\io H(u)S(u)\varphi d\mu_d\right|\leq \left|\io (H(u_{n})-H_j(u_{n}))S(u_{n})\varphi\mu_{n,d}\right|\notag \\
&+\left|\io (H_j(u)-H(u))S(u)\varphi d\mu_d\right|+\left|\io H_j(u_{n})S(u_{n})\varphi\mu_{n,d}-H_{j}(u)S(u)\varphi d\mu_{d}\right|  \\
&\leq \frac{C}{j}+\left|\io H_j(u_{n})S(u_{n})\varphi\mu_{n,d}-H_{j}(u)S(u)\varphi d\mu_{d}\right|.\nonumber 
\end{align}
Now, thanks to the assumptions on the functions $H_j,S$ and $\varphi$ and to \eqref{Tkunif}, it is easy to verify that $H_j(u_{n})S(u_{n})\varphi$ is bounded in $W_{0}^{1,p}(\Omega)\cap L^\infty(\Omega)$ with respect to $n\in\NN$ and its almost everywhere limit is given by $H_j(u)S(u)\varphi$. Then, by Lemma \ref{Orsina} and \eqref{approxmisura}, we get
\begin{equation*}
\lim_{n\to\infty}\io H_j(u_{n})S(u_{n})\varphi\mu_{n,d}=\io H_j(u)S(u)\varphi d\mu_d.
\end{equation*}
Now, using the Lebesgue Theorem for general measure and the assumptions on the sequence $H_j$, we are able to pass to the limit also with respect to $j$, concluding that
\begin{equation*}
\lim_{j\to\infty}\lim_{n\to\infty}\io H_j(u_{n})S(u_{n})\varphi\mu_{n,d}=\io H(u)S(u)\varphi d\mu_d
\end{equation*}
and that $H(u)S(u)\varphi \in L^1(\Omega,\mu_d)$. As regards the second term in the right hand side of \eqref{dif3}, we first observe that, since $S$ has compact support, there exist $k > 0$ and $c_k > 0$ such that $S(s)\le c_k(k-s)^+$ for every $s\in \RR$. Then we have
\begin{align*}
&\io H(u_{n})S(u_{n})\varphi\mu_{n,c}=\io H(u_{n})S(u_{n})\varphi\Psi_\nu\mu_{n,c}+\io H(u_{n})S(u_{n})\varphi (1-\Psi_\nu)\mu_{n,c}\\
&\leq \|\varphi\|_{L^\infty(\Omega)}c_k\io H(u_{n})(k-u_{n})^+\Psi_\nu\mu_{n,c}+\|H\|_{L^\infty(\RR)}\|\varphi\|_{L^\infty(\Omega)}\|S\|_{L^\infty(\RR)}\io (1-\Psi_\nu)\mu_{n,c}.
\end{align*}
So, by Lemma \ref{dalmaso} and \eqref{u4}, letting first $n$ go to infinity and then $\nu$ go to zero, we obtain
\begin{equation*}\label{dif5}
\lim_{n\to\infty}\io H(u_{n})S(u_{n})\varphi\mu_{n,c}=0,
\end{equation*} 
which proves \eqref{ren1}, as desired. 
\\Now we want to prove that \eqref{ren2} holds true. 
\\First we need to prove that $u$ is a distributional solution of \eqref{pbmain}. If $\varphi\in C^1_c(\Omega)$, we have
\begin{equation}\label{lim}
\io a(x,\nabla u_{n})\cdot \nabla\varphi=\io H(u_{n})\varphi\mu_{n,d}+\io H(u_{n})\varphi\mu_{n,c}.
\end{equation}
For the left hand side of the previous, by \eqref{conva} we deduce
$$ \lim_{n\to\infty} \io a(x,\nabla u_{n})\cdot \nabla\varphi=\io a(x,\nabla u)\cdot\nabla\varphi.$$
Concerning the first term on the right hand side of \eqref{lim}, we reason as in \eqref{hj} yielding
\begin{align*}
\left|\io H(u_n)\varphi\mu_{n,d}-\io H(u)\varphi d\mu_d\right|\leq &\left|\io (H(u_n)-H_j(u_n))\varphi\mu_{n,d}\right|\\
&+\left|\io (H_j(u)-H(u))\varphi d\mu_d\right|\nonumber\\
&+\left|\io H_j(u_n)\varphi\mu_{n,d}-H_j(u)\varphi d\mu_{d}\right|\nonumber\\
\leq &\frac{C}{j}+\left|\io H_j(u_n)\varphi\mu_{n,d}-H_j(u)\varphi d\mu_{d}\right|.\nonumber
\end{align*}
To prove that the last term in the previous goes to zero with respect to $n$, it is sufficient to show that $H_j(u_n)\varphi$ is bounded with respect to $n$, with $j$ fixed, in $W^{1,p}_0(\Omega)\cap L^\infty(\Omega)$.
Clearly $H_j(u_n)\varphi$ is bounded, with respect to $n$, in $L^\infty(\Omega)$. To show the boundedness in $W^{1,p}_0(\Omega)$ of $H_j(u_n)\varphi$, we take $\theta_k(u_n)\int_0^{T_{2k}(u_n)}\left|H'_j(s)\right|ds$ as test function in the weak formulation of \eqref{pbapproxlimitata}. Then we find 
\begin{align*}
\io a(x,\nabla u_n)\cdot\nabla T_{2k}(u_n)&\left|H'_j(T_{2k}(u_n))\right|\theta_k(u_n) =\io H(u_n)\left(\theta_k(u_n)\int_0^{T_{2k}(u_n)}\left|H'_j(s)\right|ds\right)\mu_n\\
&+\frac{1}{k}\int_{\{k<u_n<2k\}}a(x,\nabla u_n)\cdot\nabla u_n\left(\int_0^{T_{2k}(u_n)}\left|H'_j(s)\right|ds\right)\\
&\le \|H\|_{L^\infty(\mathbb{R})}\|H_j\|_{L^\infty(\mathbb{R})}\|\mu_n\|_{L^1(\Omega)} +\epsilon(k)\\
&\leq C+\epsilon(k),
\end{align*}
since $H'_j\in L^1(\RR^+)$ and \eqref{r0} holds. Then, by \eqref{cara1}, we deduce
$$\io |\nabla T_{2k}(u_n)|^p\left|H'_j(T_{2k}(u_n))\right|\theta_k(u_n)\leq C+\epsilon(k)$$
namely
$$\io |\nabla u_n|^p\left|H'_j(u_n)\right|\theta_k(u_n)\leq C+\epsilon(k).$$
Letting $k\to\infty$ in the previous and using Fatou Lemma, we find
$$\frac{1}{\|H'_j\|^{p-1}_{L^\infty(\mathbb{R})}}\io\left|\nabla H_j(u_n)\right|^p\leq\io|\nabla u_n|^p\left|H'_j(u_n)\right|\leq C,$$
which implies that $H_j(u_n)\varphi$ is bounded in $W^{1,p}_0(\Omega)$ with respect to $n$.
\\Now we go back to the second term on the right hand side of \eqref{lim}. By \eqref{approxmisura}, recalling that $\varphi \in C^1_c(\Omega)$, it results
\begin{align}\label{dis3}
\left|\io H(u_{n})\varphi\mu_{n,c}-\io H(\infty)\varphi d\mu_{c}\right| \leq &\left|\io H(u_{n})\varphi\mu_{n,c} -\io H(\infty)\varphi\mu_{n,c}\right| \\
&+ \left|\io H(\infty)\varphi\mu_{n,c}-\io H(\infty)\varphi d\mu_{c}\right| \nonumber \\
\leq &\|\varphi\|_{L^{\infty}(\Omega)}\io |H(u_{n})-H(\infty)|\mu_{n,c} + \epsilon(n). \nonumber
\end{align}
By \eqref{h}, for every $\eta>0$ there exist $s_{\eta}>0$ and $L_{\eta}>0$ such that
\begin{equation}
\label{H1}
|H(s)-H(\infty)|\leq\eta, \qquad \forall s> s_{\eta}
\end{equation}
and, using that $H(s)>0$ for $s\ge 0$, we have
\begin{equation}\label{H2}
|H(s)-H(\infty)| \leq H(s)L_{\eta}(2s_{\eta}-s)^+, \qquad \forall s\in[0,s_{\eta}].
\end{equation}
It follows from \eqref{H1}, \eqref{H2}, \eqref{approxmisura} and applying \eqref{u4} with $k=2s_{\eta}$ that
\begin{align*}
\io |H(u_{n})-H(\infty)|\mu_{n,c}=&\io |H(u_{n})-H(\infty)|\Psi_\nu\mu_{n,c}+\io |H(u_{n})-H(\infty)|(1-\Psi_\nu)\mu_{n,c}\\
\leq &\eta\int_{\{u_{n}> s_{\eta}\}}\Psi_{\nu}\mu_{n,c}+L_{\eta}\int_{\{u_n\leq s_{\eta}\}} H(u_{n})(2s_{\eta}-u_{n})^+\Psi_\nu \mu_{n,c}\\
&+2\|H\|_{L^{\infty}(\RR)}\io (1-\Psi_\nu)\mu_{n,c}\\
\le&\epsilon(n,\nu,\eta).
\end{align*}
Hence, by \eqref{dis3}, we have
\begin{align*}
\left|\io H(u_{n})\varphi\mu_{n,c}-\io H(\infty)\varphi d\mu_{c}\right|\leq \epsilon(n,\nu,\eta),
\end{align*}
which implies that
\begin{equation}\label{genultima2.14mp}
\lim_{n\to\infty}\io H(u_{n})\varphi\mu_{n,c}=H(\infty)\io\varphi d\mu_{c},
\end{equation}
then \eqref{distrdef} is proved. 
\\Now taking $S=\theta_t$ and $\varphi\in C^1_c(\Omega)$ in \eqref{ren1} we obtain
\begin{align*}
\frac{1}{t}\int_{\{t<u<2t\}}a(x,\nabla u)\cdot\nabla u \varphi=-\io H(u)\theta_{t}(u)\varphi d\mu_{d}+\io a(x,\nabla u)\cdot\nabla\varphi \theta_{t}(u).
\end{align*}
Now, using that $\theta_t$ belongs to $C_b(\RR)$ and that $u$ is cap$_p$-almost everywhere defined, by Lebesgue's Theorem for general measure we obtain
\begin{align*}
\lim_{t\to\infty}\frac{1}{t}\int_{\{t<u<2t\}}a(x,\nabla u)\cdot\nabla u \varphi=-\io H(u)\varphi d\mu_{d}+\io a(x,\nabla u)\cdot\nabla\varphi,
\end{align*}
which implies, by \eqref{distrdef}, that
\begin{equation}\label{con1}
\lim_{t\to\infty}\frac{1}{t}\int_{\{t<u<2t\}}a(x,\nabla u)\cdot\nabla u \varphi=H(\infty)\io \varphi d\mu_{c} \qquad \forall \varphi\in C^1_c(\Omega).
\end{equation}
By the density of $C^1_c(\Omega)$ in $C_c(\Omega)$, \eqref{con1} is true when $\varphi\in C_c(\Omega)$. 
\\Now, if $\varphi\in C_b(\Omega)$, we have $\varphi\Psi_\nu\in C_c(\Omega)$ and then
\begin{equation}\label{con2}
\lim_{t\to\infty}\frac{1}{t}\int_{\{t<u<2t\}}a(x,\nabla u)\cdot\nabla u \Psi_\nu\varphi=H(\infty)\io \varphi\Psi_\nu d\mu_{c}\qquad \forall \varphi\in C_b(\Omega).
\end{equation}
Applying \eqref{con0} with $r=t$, and letting $n$ go to infinity, we find
\begin{equation}\label{con3}
\lim_{t\to\infty}\frac{1}{t}\int_{\{t<u<2t\}}a(x,\nabla u)\cdot\nabla u(1-\Psi_\nu)\varphi=\epsilon(\nu)\qquad \forall \varphi\in C_b(\Omega).
\end{equation}
Then, by \eqref{con2} and \eqref{con3}, we deduce
\begin{equation*}\label{con4}
\lim_{t\to\infty}\frac{1}{t}\int_{\{t<u<2t\}}a(x,\nabla u)\cdot\nabla u \varphi=H(\infty)\io \varphi\Psi_\nu d\mu_{c}+\epsilon(\nu)\qquad \forall \varphi\in C_b(\Omega).
\end{equation*}
Letting $\nu$ go to zero, by Lemma \ref{dalmaso}, we obtain \eqref{ren2}.
\\If we further ask that $H$ is non-increasing and that $\mu_c\equiv0$, uniqueness follows as in Theorem $2.11$ of \cite{mupo}. This concludes the proof of Theorem \ref{teoexrinuniqueness} if $\gamma=0$.
\end{proof}

\section{The approximation scheme if $H$ is singular}\label{sec:maggiore0}
	
In this section we collect some properties of the solutions to the scheme of approximation which will be the basis to prove Theorems \ref{teoexrinuniqueness}, \ref{teoexistence} in case $\gamma>0$, namely when the function $H$ can blow up at the origin.
\\We will find a solution to the problem  passing to the limit in the following approximation
\begin{equation}\begin{cases}
\displaystyle-\operatorname{div}(a(x,\nabla u_{n,m})) = H_n(u_{n,m})(\mu_d + \mu_{m}) &  \text{in}\, \Omega,\\
u_{n,m}=0 & \text{on}\ \partial \Omega.
\label{pbapproxeps}
\end{cases}\end{equation}
where $H_n=T_n(H)$ and $\mu_{m}$ is, once again, a sequence of nonnegative functions in $L^{\infty}(\Omega)$, bounded in $L^1(\Omega)$, that converges to $\mu_c$ in the narrow topology of measures. We recall that $H$ satisfies \eqref{h} and \eqref{h1} with $\gamma>0$ and that $a$ is a Carath\'eodory function such that \eqref{cara1}, \eqref{cara2} and \eqref{cara3} with $1<p<N$ hold true.
\\ The existence of a nonnegative renormalized solution $u_{n,m}$ to problem \eqref{pbapproxeps} is guaranteed by the result proven in Section \ref{exgamma=0}. Moreover it follows from Lemma \ref{equivrindis} that $u_{n,m}$ is also a distributional solution to \eqref{pbapproxeps}.
\\ \\For the sake of simplicity, since until the passage to the limit it will be not necessary to distinguish between $n$ and $m$, we will consider the following approximation in place of \eqref{pbapproxeps}
\begin{equation}\begin{cases}
\displaystyle -\operatorname{div}(a(x,\nabla u_{n})) = H_n(u_{n})(\mu_d + \mu_n) &  \text{in}\, \Omega, \\
u_{n}=0 & \text{on}\ \partial \Omega.
\label{pbapprox}
\end{cases}
\end{equation}
The first step is proving the local uniform positivity for $u_{n}$, which will assure that the possibly singular right hand side is locally integrable with respect to $\mu_d$.
\begin{lemma}
Let $u_{n}$ be a solution to \eqref{pbapprox}. Then
\begin{equation} 
\forall\;\omega\subset\subset\Omega\; \ \ \exists\; c_{\omega}>0: u_{n}\geq c_{\omega} \ \ \text{cap$_p$-a.e.}\;\ \text{in}\ \;\omega,\;\ \ \forall n\ge n_0,
\label{posunif}
\end{equation}
for some $n_0>0$.
\label{locpos}
\end{lemma}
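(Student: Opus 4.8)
The plan is to bound $u_n$ from below, on every $\omega\subset\subset\Omega$, by a single fixed positive function built by solving a \emph{non-singular} problem driven only by the diffuse part $\mu_d$. First I would reduce the singular nonlinearity to a positive constant where $u_n$ is small. Since $H$ is continuous and strictly positive on $(0,+\infty)$ and is large near the origin, the number $c_*:=\min\bigl(1,\inf_{(0,1]}H\bigr)$ is strictly positive, and therefore for every $n\ge 1$ one has $H_n(u_n)=\min(n,H(u_n))\ge c_*$ on $\{u_n<1\}$. Dropping the nonnegative term $\mu_n$ in \eqref{pbapprox}, this says that $u_n$ is a supersolution of $-\operatorname{div}(a(x,\nabla u_n))=c_*\mu_d$ as long as $u_n<1$.

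Next I would build the barrier. Fix a continuous $g:[0,+\infty)\to[0,1]$ with $g\equiv1$ on $[0,\tfrac12]$ and $g\equiv0$ on $[1,+\infty)$, and set $\tilde H:=c_*g$, a bounded continuous nonnegative nonlinearity. Let $\underline u\ge0$ solve
\[
-\operatorname{div}(a(x,\nabla\underline u))=\tilde H(\underline u)\,\mu_d,\qquad \underline u=0\ \text{on}\ \partial\Omega,
\]
whose existence follows from the finite-$H$ theory of Section \ref{exgamma=0} applied to the diffuse datum $\mu_d$ (the strict positivity of the nonlinearity plays no role here, as $\mu_d$ has no concentrated part, cf. Remark \ref{linda}). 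Because $a(x,0)=0$ by \eqref{cara2} and $g(0)=1$, the choice $\underline u\equiv0$ would force $c_*\mu_d\equiv0$, against \eqref{hmu}; hence $\underline u\not\equiv0$. Thus $\underline u$ is a nontrivial nonnegative supersolution of the homogeneous operator and, by the weak Harnack inequality/strong maximum principle for operators with the structure \eqref{cara1}--\eqref{cara3}, for every $\omega\subset\subset\Omega$ there is $d_\omega>0$ with $\underline u\ge d_\omega$ cap$_p$-a.e. in $\omega$.

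The heart of the argument is the comparison $u_n\ge\underline u$ for every $n\ge 1$. I would test the (renormalized) formulations of the equations solved by $u_n$ and by $\underline u$ with $\varphi=T_k\bigl((\underline u-u_n)^+\bigr)\ge0$, letting $k\to\infty$; note that $\varphi$ is supported in $\{u_n<\underline u\}$. The crucial pointwise inequality is that, on $\{u_n<\underline u\}$, one has $H_n(u_n)\ge\tilde H(\underline u)$: where $\underline u\ge1$ the right-hand side vanishes, while where $\underline u<1$ one has $u_n<1$ and hence $H_n(u_n)\ge c_*\ge\tilde H(\underline u)$. Together with $\mu_n\ge0$ this shows that on the crossing set the source of $u_n$ dominates that of $\underline u$, so subtracting the two identities yields
\[
\io\bigl(a(x,\nabla u_n)-a(x,\nabla\underline u)\bigr)\cdot\nabla(\underline u-u_n)^+\ge0 .
\]
By the strict monotonicity \eqref{cara3} the integrand is nonpositive on $\{u_n<\underline u\}$, hence it vanishes there, so $\nabla(\underline u-u_n)^+=0$ and, by the boundary condition, $(\underline u-u_n)^+\equiv0$. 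Reading this for the cap$_p$-quasi continuous representatives (Lemma \ref{dalmaso2}) gives $u_n\ge\underline u$ cap$_p$-a.e. Combining with the previous step, $u_n\ge\underline u\ge d_\omega=:c_\omega$ cap$_p$-a.e. in $\omega$ for all $n\ge n_0:=1$, which is \eqref{posunif}.

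The hard part will be exactly this comparison step. The barrier $\underline u$ need not be bounded, since the diffuse measure $\mu_d$ may be singular with respect to Lebesgue measure, so the test function must be truncated and one has to justify the passage to the limit $k\to\infty$ inside the renormalized framework. More importantly, the vanishing weight $g$ is what makes the two sources \emph{aligned} on the crossing set $\{u_n<\underline u\}$: this is precisely what prevents an uncontrolled term from the region where $\underline u$ is large and lets \eqref{cara3} close the estimate. By contrast, comparing directly against the untruncated solution of $-\operatorname{div}(a(x,\nabla w))=c_*\mu_d$ would fail, because where $u_n$ is large $H_n(u_n)$ may fall below $c_*$.
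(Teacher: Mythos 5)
Your strategy is the same one the paper uses: build a barrier solving an auxiliary problem driven by $\mu_d$ alone with a bounded nonlinearity dominated by $H_n$ for $n$ large (your $\tilde H=c_*g$, if $g$ is chosen non-increasing, is precisely an admissible instance of the non-increasing $h\le H_n$ of the paper's proof), deduce nontriviality of the barrier from \eqref{hmu}, get local positivity from the strong maximum principle, and conclude by a monotonicity comparison, finally upgrading to cap$_p$-a.e. The genuine gap is the comparison step, which you yourself flag as ``the hard part'' but whose proposed remedy does not work: $T_k\bigl((\underline u-u_n)^+\bigr)$ is in general \emph{not} an admissible test function, since it need not belong to $W^{1,p}_0(\Omega)$ (the set $\{0<\underline u-u_n<k\}$ meets the regions where both functions exceed every level, and there only the truncations of $u_n$ and $\underline u$ have $p$-integrable gradients), and since \eqref{ren1} only accepts pairs $S(u)\varphi$ with $S$ compactly supported. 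Truncating the \emph{difference} does not repair this. The paper instead truncates \emph{each function}: it takes $S=\theta_k$ together with $\varphi=\theta_k(v)T_r(v-u_n)^+$ in one equation and $\varphi=\theta_k(u_n)T_r(v-u_n)^+$ in the other, and then shows that the four resulting energy-at-infinity terms of the type $\frac1k\int_{\{k<\cdot<2k\}}a(x,\nabla\cdot)\cdot\nabla\cdot$ vanish as $k\to\infty$ because the concentrated parts of both data are zero.

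The irony is that your own construction already contains the tool that closes this gap, and your side remark that ``the barrier $\underline u$ need not be bounded'' is false for your barrier. Since $\tilde H\equiv0$ on $[1,+\infty)$, testing with $G_1(\underline u)$ (at the level of the approximating problems, exactly as in Section \ref{H=0}) yields $\|\underline u\|_{L^\infty(\Omega)}\le 1$, hence $\underline u=T_1(\underline u)\in W^{1,p}_0(\Omega)\cap L^\infty(\Omega)$. With this in hand: (i) $(\underline u-u_n)^+=(T_1(\underline u)-T_1(u_n))^+$ is a legitimate $\varphi$ in \eqref{ren1}, paired with $S=\theta_r$ and letting $r\to\infty$, the remainder term vanishing by \eqref{ren2} because the data of both problems have no concentrated part; the comparison then closes exactly as you outline. (ii) $\underline u$ is a genuine nonnegative supersolution in $W^{1,p}_0(\Omega)$, so Trudinger's strong maximum principle \cite{t} applies to it directly, whereas for a general (possibly unbounded) renormalized barrier, as in the paper, one must first invoke the structure results of \cite{dmop} to see that a truncation $T_{\overline r}(v)$ solves an equation with an additional nonnegative diffuse measure before the maximum principle can be used at all; this is the step you glossed over with ``weak Harnack inequality/strong maximum principle.'' One last caveat, which is shared with the paper's proof: your claim $c_*>0$ requires $\inf_{(0,1]}H>0$, which does not follow from \eqref{h1} (an upper bound only) together with positivity and continuity of $H$ on $(0,+\infty)$; both your construction and the paper's construction of $h$ (borrowed from \cite{do}) implicitly assume this non-degeneracy of $H$ at the origin.
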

\begin{proof} 
The proof is similar to the one of Lemma $3.4$ in \cite{do} given for $p=2$. For this reason we just sketch it. For some $n_0\in \mathbb{N}$, it is possible to construct a non-increasing function $h\in C_b(\RR)$ such that $h(s)\leq H_n(s)$ for every $n\ge n_0$ and for all $s\geq 0$. 
\\Then we can consider the following problem
\begin{equation}
\begin{cases}
\displaystyle -\operatorname{div}(a(x,\nabla v)) = h(v)\mu_d &  \text{in}\, \Omega, \\
v=0 & \text{on}\ \partial \Omega,
\label{pbv}
\end{cases}
\end{equation}
for which the existence of a nonnegative renormalized solution $v\not\equiv0$ follows once again from Section \ref{exgamma=0}. It can be proven that there exists $\overline{r}>0$ such that $\mu_d\lfloor_{\{v< \overline{r}\}}\not\equiv0$ and that $h(v)\mu_d$ is a diffuse measure respect to $p$-capacity. Then, from Definition $2.29$ and Remark $2.32$ in \cite{dmop}, we deduce that $T_{\overline{r}}(v)\in W^{1,p}_0(\Omega)$ solves the following
\begin{equation*}
\displaystyle -\operatorname{div}(a(x,\nabla T_{\overline{r}}(v))) =h(v)\mu_d\lfloor_{\{v<\overline{r}\}} + \lambda_{\overline{r}} \ge 0 \ \  \text{in}\ \Omega,
\end{equation*}
where $\lambda_{\overline{r}}$ is a nonnegative diffuse measure concentrated on the set $\{v=\overline{r}\}$. Hence we can apply the strong maximum principle (see, for instance, Theorem $1.2$ of \cite{t}), obtaining
\begin{equation*} 
\forall\;\omega\subset\subset\Omega \ \ \ \exists\; C_{\omega,{\overline{r}}}>0: v\ge T_{\overline{r}}(v)\geq c_{\omega}:= C_{\omega,{\overline{r}}}>0\; \ \text{a.e. in}\;\ \omega\,.
\end{equation*}
Now we consider the renormalized formulations of \eqref{pbapprox} and of \eqref{pbv}, taking $S=\theta_k$ in both equations and $\varphi=\theta_k(v)T_r(v-u_{n})^+$ in \eqref{pbapprox}, $\varphi=\theta_k(u_{n})T_r(v-u_{n})^+$ in \eqref{pbv}, where $r>0$ is fixed. 
\\We have
\begin{align}\label{diffren1}
&\io\big(a(x,\nabla v)-a(x,\nabla u_{n})\big)\cdot\nabla T_r(v-u_{n})^+\theta_k(v)\theta_k(u_{n}) \\
&=\frac{1}{k}\int_{\{k<u_{n}<2k\}}a(x,\nabla v)\cdot \nabla u_{n}T_r(v-u_{n})^+\theta_k(v)-\frac{1}{k}\int_{\{k<v<2k\}}a(x,\nabla u_{n})\cdot\nabla vT_r(v-u_{n})^+\theta_k(u_{n})\nonumber\\
&+\frac{1}{k}\int_{\{k<v<2k\}}a(x,\nabla v)\cdot\nabla vT_r(v-u_{n})^+\theta_k(u_{n})-\frac{1}{k}\int_{\{k<u_{n}<2k\}}a(x,\nabla u_{n})\cdot\nabla u_{n}T_r(v-u_{n})^+\theta_k(v)\nonumber\\
&+\io \big(h(v)-H_n(u_{n})\big)T_r(v-u_{n})^+\theta_k(v)\theta_k(u_{n})d\mu_d-\io H_n(u_{n})T_r(v-u_{n})^+\theta_k(v)\theta_k(u_{n})\mu_{n}.\nonumber
\end{align}
Since the concentrated part of the datum is zero both in \eqref{pbapprox} and in \eqref{pbv}, from the definition of renormalized solution we obtain that the third and the fourth term of the right hand side of \eqref{diffren1} go to zero as $k$ goes to infinity. With the same argument, after an application of the H\"{o}lder inequality, we deduce that the first and the second term of the right hand side of the previous go to zero as $k$ goes to infinity. Since the last term of \eqref{diffren1} is nonpositive and $h$ is non-increasing, we deduce that
\begin{align*}
&\io\big(a(x,\nabla v)-a(x,\nabla u_{n})\big)\cdot \nabla T_r(v-u_{n})^+\theta_k(v)\theta_k(u_{n})\\
\le&\int_{\{v\ge u_{n}\}}\big(h(v)-H_n(u_{n})\big)T_r(v-u_{n})^+\theta_k(v)\theta_k(u_{n})d\mu_d\nonumber\\
\le&\int_{\{v\ge u_{n}\}}\big(h(u_{n})-H_n(u_{n})\big)T_r(v-u_{n})^+\theta_k(v)\theta_k(u_{n})d\mu_d.\nonumber
\end{align*}
Since $h\le H_n$ for every $n\ge n_0$, $h$ and $H_n$ are continuous and $u_{n}$ is cap$_{p}$-almost everywhere defined, we have $\big(h(u_{n})-H_n(u_{n})\big)\le 0 $ cap$_{p}$-almost everywhere in $\Omega$ if $n\ge n_0$. Moreover, applying in the previous the Fatou Lemma first in $k$ and then in $r$, we deduce
$$\io\big(a(x,\nabla v)-a(x,\nabla u_{n})\big)\cdot \nabla(v-u_{n})\chi_{\{v\ge u_{n}\}}\le0,$$
which, by \eqref{cara3}, implies 
$$\chi_{\{v\ge u_{n}\}}\equiv0 \ \ \text{if $n\geq n_0$}.$$ 
Hence we have proved that \eqref{posunif} holds almost everywhere in $\Omega$.
\\Now, if $\omega\subset\subset\Omega$ and $k_{\omega}>c_{\omega}$, then 
\begin{equation}\label{komega}
T_{k_{\omega}}(u_n)\ge c_{\omega}\;\text{a.e. in $\omega$.}
\end{equation}
Using the definition of the set of Lebesgue points of a function $f$ applied with the choice $f=\left.T_{k_{\omega}}(u_n)\right|_{\omega}$ and Lebesgue differentiation Theorem, we deduce that
\begin{equation*}
T_{k_{\omega}}(u_n)\ge c_{\omega}\;\text{in}\;\mathcal{L}_{\left.T_{k_{\omega}}(u_n)\right|_{\omega}}.
\end{equation*}
Since $T_{k_{\omega}}(u_n)\in W^{1,p}(\omega)$, using Proposition $8.6$ of \cite{p} we obtain that $\operatorname{cap}_p(\omega\setminus\mathcal{L}_{\left.T_{k_{\omega}}(u_n)\right|_{\omega}})=0$. In particular \eqref{komega} holds cap$_p$-almost everywhere on $\omega$ and, since $u_n\ge T_{k_{\omega}}(u_n)$, we conclude that \eqref{posunif} holds cap$_p$-almost everywhere in $\omega$.
\end{proof}
Now we are interested in providing some a priori estimates up to the boundary in order to give a weak sense to the Dirichlet datum.
\begin{lemma}
Let $u_n$ be a solution to \eqref{pbapprox}. Then $T_k^\frac{\tau-1+p}{p}(u_n)$ is bounded in $W^{1,p}_0(\Omega)$  for every fixed $k>0$ where $\tau = \max(1,\gamma)$.
\label{bordo}
\end{lemma}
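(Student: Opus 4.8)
The plan is to test the equation with a suitable power of the truncation of $u_n$, chosen so that the resulting gradient term reconstructs $|\nabla T_k^{\frac{\tau-1+p}{p}}(u_n)|^p$. I set $\beta=\frac{\tau-1+p}{p}$ and note that, since $\tau=\max(1,\gamma)\ge 1$ and $p>1$, one has $\beta\ge 1$ and $p(\beta-1)=\tau-1$. First I would observe that the datum of \eqref{pbapprox}, namely $H_n(u_n)(\mu_d+\mu_n)$, is diffuse with respect to the $p$-capacity: indeed $H_n=T_n(H)$ is bounded, $u_n$ is cap$_p$-quasi continuous so that $H_n(u_n)$ is a bounded quasi continuous factor, $\mu_d$ is diffuse and $\mu_n\in L^\infty(\Omega)$ is absolutely continuous. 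Hence the concentrated part of this datum vanishes and the tail term in \eqref{ren2} is identically zero for problem \eqref{pbapprox}.

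Next I would use $S=\theta_r$ (with $r>k$, $\theta_r$ as in \eqref{teta}, which has compact support) and $\varphi=T_k(u_n)^\tau\in W^{1,p}_0(\Omega)\cap L^\infty(\Omega)$ in the renormalized formulation \eqref{ren1}. Since $\nabla\big(T_k(u_n)^\tau\big)$ is supported in $\{u_n<k\}$, where $\theta_r(u_n)\equiv 1$, the first term equals $\tau\io a(x,\nabla u_n)\cdot\nabla T_k(u_n)\,T_k(u_n)^{\tau-1}$; on the support of $\theta_r'(u_n)=\{r<u_n<2r\}$ one has $T_k(u_n)^\tau=k^\tau$, so the $S'$-term reduces to $-\frac{k^\tau}{r}\int_{\{r<u_n<2r\}}a(x,\nabla u_n)\cdot\nabla u_n$, which tends to $0$ as $r\to\infty$ by \eqref{ren2} with zero concentrated part (taking $\varphi\equiv 1$). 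Letting $r\to\infty$ I would thus obtain
\begin{equation*}
\tau\io a(x,\nabla u_n)\cdot\nabla T_k(u_n)\,T_k(u_n)^{\tau-1}=\io H_n(u_n)T_k(u_n)^\tau\,d(\mu_d+\mu_n).
\end{equation*}

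To estimate the right-hand side uniformly in $n$, I would split the domain. On $\{u_n\ge s_0\}$ the factor $H_n(u_n)\le H(u_n)$ is bounded by $\sup_{[s_0,\infty)}H<\infty$ (finite by \eqref{h} and continuity), so the contribution is at most $Ck^\tau\|\mu_d+\mu_n\|_{\mathcal{M}(\Omega)}$. On $\{u_n<s_0\}$, \eqref{h1} gives $H_n(u_n)\le C u_n^{-\gamma}$ and, using $T_k(u_n)\le u_n$ together with $\tau\ge\gamma$, one has $H_n(u_n)T_k(u_n)^\tau\le C u_n^{\tau-\gamma}\le C s_0^{\tau-\gamma}$, so this contribution is likewise controlled by a constant times $\|\mu_d+\mu_n\|_{\mathcal{M}(\Omega)}$. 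Since $\|\mu_d+\mu_n\|_{\mathcal{M}(\Omega)}\le C$, the right-hand side is bounded by a constant depending on $k$ but not on $n$. Using coercivity \eqref{cara1} on the left-hand side and the chain rule $|\nabla T_k^\beta(u_n)|^p=\beta^p\,T_k(u_n)^{\tau-1}|\nabla T_k(u_n)|^p$, I would then conclude $\io|\nabla T_k^{\frac{\tau-1+p}{p}}(u_n)|^p\le C(k)$ uniformly in $n$, which is the assertion.

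The main obstacle is obtaining the bound on the right-hand side \emph{uniformly in $n$} despite the singularity of $H$ at the origin; this is precisely what forces the exponent $\tau=\max(1,\gamma)$, since $\tau\ge\gamma$ is what keeps $u_n^{\tau-\gamma}$ bounded near $\{u_n=0\}$ and thus absorbs the blow-up of $H_n$. The remaining care concerns the admissibility of the power test function and the passage $r\to\infty$: these rely on $T_k(u_n)^\tau\in W^{1,p}_0(\Omega)\cap L^\infty(\Omega)$ (valid because $\tau\ge 1$, so $s\mapsto s^\tau$ is Lipschitz on $[0,k]$ and vanishes at the origin) and on the $\mu_d$-measurability of $H_n(u_n)T_k(u_n)^\tau$, guaranteed by the cap$_p$-quasi continuity of $u_n$.
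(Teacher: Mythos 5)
Your proof is correct and follows essentially the same route as the paper: testing the renormalized formulation of \eqref{pbapprox} with $S=\theta_r$ and $\varphi=T_k^\tau(u_n)$, letting $r\to\infty$ using that the datum $H_n(u_n)(\mu_d+\mu_n)$ has no concentrated part, and splitting the right-hand side at $s_0$ so that \eqref{h1} together with $\tau\ge\gamma$ controls the singular region while $\|H\|_{L^\infty([s_0,\infty))}$ controls the rest. The only difference is that you spell out the details (admissibility of the power test function, the chain rule identity $p(\beta-1)=\tau-1$, and the vanishing of the tail term) that the paper leaves implicit.
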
 
\begin{proof}
 We take as test functions in the renormalized formulation of \eqref{pbapprox} $S=\theta_r$ and $\varphi=T_k^\tau(u_n)$
where $r>k$. We let $r\to\infty$ and use that the concentrated part of the datum in \eqref{pbapprox} is zero. Then we obtain the following
\begin{equation}\label{g<=1}
\begin{aligned}
\displaystyle \int_{\Omega} |\nabla T_k^{\frac{\tau-1+p}{p}}(u_{n})|^p &\le Cs_0^{\tau-\gamma}\int_{\{u_n<s_0\}}(d\mu_d + \mu_n)+Ck^\tau\|H\|_{L^\infty ([s_0,+\infty))}\int_{\{u_n\ge s_0\}}(d\mu_d+\mu_n)\\ 
&\le C(k^\tau+1),
\end{aligned}
\end{equation}
as desired.
\end{proof}
\begin{remark}\label{gamma>1}
Let us underline that, in case $\gamma>1$, $T_k(u_n)$ is bounded in $W^{1,p}_{loc}(\Omega)$ with respect to $n\in\NN$ for $n$ large enough and for every fixed $k>0$ . Indeed, it follows from Lemma \ref{locpos} and Lemma \ref{bordo} that for every $\omega \subset \subset \Omega$ it results
\begin{align*}
&\left(\frac{\gamma+p-1}{p}\right)^p c_{\omega}^{\gamma-1}\int_{\omega}|\nabla T_k(u_n)|^p=\left(\frac{\gamma+p-1}{p}\right)^p\io T_k(u_n)^{\gamma-1}|\nabla T_k(u_n)|^p \\
&=\io|\nabla T_k(u_n)^{\frac{\gamma+p-1}{p}}|^p\le C(1+k^\gamma).
\end{align*}
\end{remark}
We prove local a priori estimates for $u_n$.
\begin{lemma}\label{lemmastimehsing}
Let $u_n$ be a solution to \eqref{pbapprox}. Then:
\begin{itemize}
\item[i)] if $p>2-\frac{1}{N}$, $u_n$ is bounded in $W^{1,q}_{loc}(\Omega)$ for every $q<\frac{N(p-1)}{N-1}$;
\item[ii)] if $1<p\le 2-\frac{1}{N}$, $u_n^{p-1}$ is bounded in $L^q_{loc}(\Omega)$ for every $q<\frac{N}{N-p}$ and $|\nabla u_n|^{p-1}$ is bounded in $L^q_{loc}(\Omega)$ for every $q<\frac{N}{N-1}$.
\end{itemize}
Moreover there exists an almost everywhere finite function $u$ such that $u_n$ converges almost everywhere to $u$ in $\Omega$, $u$ is locally cap$_p$-almost everywhere finite, locally cap$_p$-quasi continuous and such that
\begin{equation}
\forall\;\omega\subset\subset\Omega\; \ \ \exists\; c_{\omega}>0: u\geq c_{\omega}\;\text{cap$_p$-a.e. in}\;\omega,\label{posunifu}
\end{equation}
$$H(u)\in L^{\infty}(\omega;\mu_d)\quad\forall \omega \subset \subset \Omega.$$
\end{lemma}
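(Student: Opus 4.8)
The plan is to exploit the local uniform positivity already proved in Lemma~\ref{locpos} in order to reduce, on every compactly contained subset, the singular right-hand side to an ordinary nonnegative measure of uniformly bounded mass, and then to run the localized version of the argument of Lemma~\ref{lemmastime}. First I would fix $\omega\subset\subset\omega'\subset\subset\Omega$ and recall, by Lemma~\ref{locpos}, that there are $c_{\omega'}>0$ and $n_0$ with $u_n\ge c_{\omega'}$ cap$_p$-a.e. in $\omega'$ for every $n\ge n_0$. Since $H$ is continuous on $(0,\infty)$ and satisfies \eqref{h}, it is bounded on $[c_{\omega'},\infty)$, whence $H_n(u_n)\le\sup_{[c_{\omega'},\infty)}H=:C_{\omega'}<\infty$ on $\omega'$. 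Thus the datum $g_n:=H_n(u_n)(\mu_d+\mu_n)$ restricted to $\omega'$ is a nonnegative measure with $g_n(\omega')\le C_{\omega'}(\mu_d(\Omega)+\sup_n\|\mu_n\|_{L^1(\Omega)})\le C_{\omega'}$, a bound \emph{independent of $n$}; so, locally, $u_n$ solves $-\operatorname{div}(a(x,\nabla u_n))=g_n$ with $g_n\ge 0$ of uniformly bounded mass.

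Next I would derive the local energy estimate that plays the role of \eqref{Tkunif}. Taking $S=\theta_r$ and $\varphi=T_k(u_n)\phi^p$ in the renormalized formulation of \eqref{pbapprox}, where $\phi\in C^\infty_c(\omega')$ with $0\le\phi\le1$ and $\phi\equiv1$ on $\omega$, and letting $r\to\infty$ (the concentrated part of the datum in \eqref{pbapprox} being zero), the singular term on the right is controlled by $C_{\omega'}k\,g_n(\omega')\le C_{\omega'}k$, while \eqref{cara1} gives the coercive term $\alpha\int_\Omega|\nabla T_k(u_n)|^p\phi^p$. After absorbing the gradient-of-cutoff cross term through \eqref{cara2} and Young's inequality as in \cite{bg}, this yields $\int_{\omega}|\nabla T_k(u_n)|^p\le C_{\omega}(k+1)$ uniformly in $n$. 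From this local estimate, the computations of Subsection~II.4 of \cite{bg} give, if $p>2-\frac1N$, that $u_n$ is bounded in $W^{1,q}_{loc}(\Omega)$ for every $q<\frac{N(p-1)}{N-1}$; while Lemmas~$4.1$ and~$4.2$ of \cite{b6} give, if $1<p\le2-\frac1N$, that $u_n^{p-1}$ and $|\nabla u_n|^{p-1}$ are bounded locally in $L^q$ for $q<\frac{N}{N-p}$ and $q<\frac{N}{N-1}$ respectively. This proves \emph{i)} and \emph{ii)}.

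For the limit function, I would repeat the Cauchy-in-measure argument of Lemma~\ref{lemmastime}, now on an exhausting sequence $\omega_j\uparrow\Omega$: on each $\omega_j$ the local $W^{1,p}$ bound on $T_k(u_n)$ yields, by Rellich, that $T_k(u_n)$ is Cauchy in measure, and the local Marcinkiewicz bounds control the tails $|\{u_n>k\}\cap\omega_j|$ uniformly, so that, via \eqref{divido}, $u_n$ is Cauchy in measure on $\omega_j$; a diagonal subsequence then converges a.e.\ in $\Omega$ to an a.e.\ finite $u$. Passing the local estimate to the limit by weak lower semicontinuity gives $\int_\omega|\nabla T_k(u)|^p\le C_\omega(k+1)$, so a local application of Lemma~\ref{dalmaso2} shows $u$ is locally cap$_p$-a.e.\ finite and locally cap$_p$-quasi continuous. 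The a.e.\ convergence turns $u_n\ge c_{\omega'}$ into $u\ge c_{\omega'}$ a.e.\ in $\omega$, which I would upgrade to cap$_p$-a.e.\ exactly as at the end of Lemma~\ref{locpos} (Lebesgue points of $T_{k}(u)\in W^{1,p}(\omega)$ together with Proposition~$8.6$ of \cite{p}), giving \eqref{posunifu}. Finally, since $u\ge c_\omega$ cap$_p$-a.e.\ in $\omega$ and $H$ is continuous and finite on $[c_\omega,\infty)$, one has $H(u)\le\sup_{[c_\omega,\infty)}H<\infty$ cap$_p$-a.e.\ in $\omega$; as $\mu_d$ is diffuse it charges no cap$_p$-null set, so this bound holds $\mu_d$-a.e., proving $H(u)\in L^\infty(\omega,\mu_d)$.

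The main obstacle is the uniform local energy estimate: the global bound of Lemma~\ref{bordo} scales only like $k^\gamma$ when $\gamma>1$, which is too weak to reproduce the measure-data Marcinkiewicz exponents claimed in \emph{i)} and \emph{ii)}. The linear-in-$k$ scaling is recovered \emph{only} after the reduction to the bounded-mass measure datum $g_n$, and even then the gradient-of-cutoff cross term must be absorbed carefully; this is where the positivity of Lemma~\ref{locpos} is essential and where the delicate, though standard, Boccardo--Gallouët localization is used.
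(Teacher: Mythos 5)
Your overall architecture is the paper's: Lemma \ref{locpos} gives the local positivity, this makes the right-hand side locally a nonnegative measure of uniformly bounded mass, local truncation estimates then feed the localized machinery of Lemma \ref{lemmastime}, and your closing steps (Cauchy in measure on an exhaustion, weak lower semicontinuity plus a localized Lemma \ref{dalmaso2}, the upgrade of the a.e.\ lower bound to a cap$_p$-a.e.\ one via Lebesgue points, and the $L^\infty(\omega,\mu_d)$ bound on $H(u)$) are exactly the paper's and are correct. The problem is the step you yourself single out as the crux: the uniform local energy estimate. Testing the renormalized formulation of \eqref{pbapprox} with $\varphi=T_k(u_n)\phi^p$ and $S=\theta_r$ produces, besides the good term, the cutoff cross term
\begin{equation*}
p\int_\Omega a(x,\nabla u_n)\cdot\nabla\phi\,\phi^{p-1}T_k(u_n)\theta_r(u_n),
\end{equation*}
which by \eqref{cara2} is controlled only by $k\beta\int_{\operatorname{supp}\phi}|\nabla u_n|^{p-1}|\nabla\phi|$: it involves the \emph{untruncated} gradient, living also on $\{u_n>k\}$. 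Young's inequality cannot absorb this into $\alpha\int_\Omega|\nabla T_k(u_n)|^p\phi^p$, and a uniform local $L^1$ bound on $|\nabla u_n|^{p-1}$ is not available at this stage --- it is precisely part of what \emph{i)} and \emph{ii)} assert, so the argument as written is circular. In \cite{bg} no such term appears because the test functions there are global ($T_k(u_n)\in W^{1,p}_0(\Omega)$); this cross term is exactly the classical obstruction that makes local estimates for measure data delicate, and ``absorbing it as in \cite{bg}'' is not a proof.

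The paper avoids cutoffs altogether at this point: it combines the \emph{global} weighted estimate of Lemma \ref{bordo} (test function $T_k^{\tau}(u_n)$, whose weight kills the singularity of $H$ near $u_n=0$) with the positivity of Lemma \ref{locpos} to obtain the local bound of Remark \ref{gamma>1}, and then localizes Lemma \ref{lemmastime}. Your criticism that this bound scales like $C_\omega(1+k^\gamma)$ for $\gamma>1$, which naively yields a worse Marcinkiewicz exponent for the gradient, is a fair point about the paper's terseness; but the repair is again a global test function, not a cutoff. For instance, $\varphi=\int_0^{T_k(u_n)}\min\{1,(s/s_0)^\gamma\}\,ds$ is a Lipschitz function, vanishing at zero, of $T_k^{\frac{\gamma+p-1}{p}}(u_n)$, hence admissible by Lemma \ref{bordo}; since the weight saturates at $1$, the right-hand side is bounded by $C(1+k)$ (near $u_n=0$ the weight compensates $H_n(u_n)\le Cu_n^{-\gamma}$, away from zero one uses that $H$ is bounded on $[s_0,\infty)$), while on $\omega$ the weight is bounded below by $\min\{1,(c_\omega/s_0)^\gamma\}>0$ thanks to Lemma \ref{locpos}. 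This gives $\int_\omega|\nabla T_k(u_n)|^p\le C_\omega(1+k)$ with no cross term at all; with that estimate in hand, the remainder of your proposal goes through as written.
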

\begin{proof}
By Lemma \ref{bordo} and Remark \ref{gamma>1}, we have that $T_k(u_n)$ is bounded  in $W^{1,p}_{loc}(\Omega)$ with respect to $n\in\NN$ for each $k>0$ fixed and for all $\gamma>0$. Then, localizing the proof Lemma \ref{lemmastime}, we deduce immediately that i) and ii) hold true and that there exists an almost everywhere finite function $u$ such that $u_n$ converges almost everywhere to $u$ in $\Omega$. Moreover, using \eqref{g<=1}, once again Remark \ref{gamma>1} and localizing Lemma \ref{dalmaso2}, we obtain that $u$ is locally cap$_p$-almost everywhere finite and locally cap$_p$-quasi continuous.
Now, letting $n\rightarrow\infty$ in \eqref{posunif}, we deduce that
\begin{equation} 
\forall\;\omega\subset\subset\Omega\; \ \ \exists\; c_{\omega}>0: u\geq c_{\omega} \ \ a.e. \;\ \text{in}\ \;\omega,\label{posunifuqo}
\end{equation}
and, since $T_k(u)\in W^{1,p}_{loc}(\Omega)$, we can proceed as at the end of the proof of Lemma \ref{locpos} to conclude that \eqref{posunifuqo} holds also cap$_p$-almost everywhere in $\omega$, that is \eqref{posunifu}. Using \eqref{posunifu} and the fact that $H(s)$ is finite if $s>0$, we deduce $H(u)\in L^{\infty}(\omega;\mu_d)$ for every $\omega \subset \subset \Omega$.
\end{proof}
\begin{remark}\label{remark<1}
Recalling Lemma \ref {bordo}, in the case $\gamma\le1$ we can improve the previous Lemma obtaining that i) and ii) hold true globally in $\Omega$ and that $u$ is cap$_p$-almost everywhere finite and cap$_p$-quasi continuous.
\end{remark}
The next Lemma is a strong convergence result for the truncations, this time (compare with Lemma \ref{stronghlim}, see also \cite{do} for $p=2$) in the local space $W^{1,p}_{loc}(\Omega)$.
\begin{lemma}\label{stronghsing}
Let $u_n$ be a solution to \eqref{pbapprox}. Then $T_k(u_n)$ converges to $T_k(u)$ in $W^{1,p}_{loc}(\Omega)$ for every $k>0$.
\end{lemma}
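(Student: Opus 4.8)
The plan is to localize the argument of Lemma \ref{stronghlim}. Fix $\omega\subset\subset\Omega$, choose $\omega'$ with $\omega\subset\subset\omega'\subset\subset\Omega$ and a cutoff $\phi\in C^1_c(\Omega)$ with $0\le\phi\le1$, $\phi\equiv1$ on $\omega$ and $\operatorname{supp}\phi\subset\omega'$. By Lemma \ref{lemmastimehsing} one has $u_n\ge c_{\omega'}$ cap$_p$-a.e. in $\omega'$ for $n\ge n_0$, whence $H_n(u_n)\le\sup_{[c_{\omega'},\infty)}H=:M_{\omega'}<\infty$ on $\operatorname{supp}\phi$ (the supremum is finite since $H$ is continuous, positive and finite on $[c_{\omega'},\infty)$ and admits the finite limit \eqref{h}). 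This turns the possibly singular datum into a locally bounded one and is what makes the scheme work locally. The goal is to prove
$$\lim_{n\to\infty}\io\big(a(x,\nabla T_k(u_n))-a(x,\nabla T_k(u))\big)\cdot\nabla(T_k(u_n)-T_k(u))\,\phi=0,$$
after which, the integrand being nonnegative by \eqref{cara3} and $\phi\equiv1$ on $\omega$, an application of \cite[Lemma $5$]{bmp} on $\omega$ yields $\nabla u_n\to\nabla u$ almost everywhere in $\omega$ and the strong convergence of $T_k(u_n)$ in $W^{1,p}(\omega)$.

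To obtain the displayed limit I would test the renormalized formulation of \eqref{pbapprox} with $S=\theta_r$ ($r>k$) and $\varphi=(T_k(u_n)-T_k(u))\phi(1-\Psi_\nu)$, reproducing the splitting (a)--(d) of Lemma \ref{stronghlim} but carrying the factor $\phi$. The terms containing $\nabla T_k(u)\chi_{\{u_n>k\}}$ vanish as $n\to\infty$ by the local strong $L^p$-convergence of $T_k(u_n)$ from Lemma \ref{lemmastimehsing} and the local $L^{p'}$-boundedness of $a(x,\nabla u_n)\theta_r(u_n)$; the genuinely new terms carrying $\nabla\phi$ are treated the same way, since $T_k(u_n)-T_k(u)\to0$ in $L^p_{loc}(\Omega)$. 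For the datum term I split $\mu_d+\mu_n$: on the diffuse part the bound $H_n(u_n)\le M_{\omega'}$ on $\operatorname{supp}\phi$ allows passage to the limit via the decomposition $\mu_d=f-\operatorname{div}F$ and Lemma \ref{Orsina}, exactly as in \eqref{con0}, while the concentrated part is estimated by $2kM_{\omega'}\io(1-\Psi_\nu)\mu_n$ and disposed of by the narrow convergence of $\mu_n$ and Lemma \ref{dalmaso}. This reduces everything to controlling $\frac1r\int_{\{r<u_n<2r\}}a(x,\nabla u_n)\cdot\nabla u_n\,\phi(1-\Psi_\nu)$.

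That last term I handle as in \eqref{Bq}--\eqref{con0}: testing with $S=\theta_t$ and $\varphi=\pi_r(u_n)\phi(1-\Psi_\nu)$ and letting $t\to\infty$, the energy term at level $t$ vanishes because $u_n$, being a renormalized solution to \eqref{pbapprox} whose datum $H_n(u_n)(\mu_d+\mu_n)$ has no part concentrated on a set of zero $p$-capacity, satisfies \eqref{ren2} with vanishing right-hand side, i.e. \eqref{r0}. The contributions of $\nabla\phi$ and $\nabla\Psi_\nu$ are $\epsilon(n,r)$, and the residual measure term is again bounded using $H_n(u_n)\le M_{\omega'}$, the narrow convergence of $\mu_n$, and the fact that $\pi_r(u)\to0$ $\mu_d$-almost everywhere on $\operatorname{supp}\phi$ as $r\to\infty$ (here $u$ is locally cap$_p$-a.e. finite and $\mu_d$ is diffuse). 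Finally, decomposing the full integrand into its $\Psi_\nu$ and $(1-\Psi_\nu)$ parts as in \eqref{tesiq}, the $\Psi_\nu$ part is dominated by $C\io(|\nabla T_k(u_n)|^p+|\nabla T_k(u)|^p)\phi\Psi_\nu$, which is $\epsilon(n,\nu)$ by the local analogue of \eqref{u3} (test \eqref{pbapprox} with $(k-u_n)^+\phi\Psi_\nu$, once more using $H_n(u_n)\le M_{\omega'}$ on the support).

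I expect the main obstacle to be the bookkeeping of the three limits in the order $n\to\infty$, $r\to\infty$, $\nu\to0$, while the localization $\phi$ and the capacitary cutoff $\Psi_\nu$ interact: the new terms are precisely those carrying $\nabla\phi$, and one must verify that they are absorbed into $\epsilon(n)$ before sending $r\to\infty$ and $\nu\to0$, in complete analogy with the way the $\nabla\Psi_\nu$ terms are dealt with in Lemma \ref{stronghlim}.
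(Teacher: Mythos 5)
Your proposal is correct and takes essentially the same route as the paper: the paper's (much more condensed) proof likewise tests the renormalized formulation of \eqref{pbapprox} with $S=\theta_r$ and $\varphi=(T_{k}(u_{n})-T_{k}(u))(1-\Psi_{\nu})\psi$, where $\psi\in C^1_c(\Omega)$ is a cutoff equal to $1$ on $\omega\subset\subset\Omega$, and then repeats the argument of Lemma \ref{stronghlim} using the local estimates (in particular the local lower bound of Lemma \ref{locpos}, which is exactly what makes $H_n(u_n)$ bounded on $\operatorname{supp}\psi$), concluding via \cite[Lemma $5$]{bmp}. Your write-up simply supplies the details that the paper dismisses as ``analogous''.
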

\begin{proof}
The proof is similar to the one of Lemma \ref{stronghlim}. It suffices to take $\varphi=(T_{k}(u_{n})-T_{k}(u))(1-\Psi_{\nu})\psi$ and $S=\theta_r$ ($r>k$) in the renormalized formulation of \eqref{pbapprox} where $\psi\in C_c^1(\Omega)$
 such that for $\omega \subset \subset \Omega$ we have
\begin{eqnarray*}
\begin{cases}
0\leq\psi\leq1\;\text{on}\;\Omega,\\
\psi\equiv1\;\text{on}\;\omega\subset\subset\Omega.
\end{cases}
\end{eqnarray*}
Hence, through the local estimates and proceeding in an analogous way as to prove the strong convergence of truncations in Lemma \ref{stronghlim}, we obtain 
\begin{equation*}
\lim_{n\rightarrow\infty}\io\big(a(x,\nabla T_{k}(u_{n}))-a(x,\nabla T_{k}(u))\big)\cdot\nabla(T_{k}(u_{n})-T_{k}(u))\psi=0,
\end{equation*}
so that, by \cite[Lemma $5$]{bmp}, we have that $T_k(u_n)$ converges to $T_k(u)$ strongly in $W^{1,p}_{loc}(\Omega)$ for every $k>0$ and $\nabla u_n$  converges to $\nabla u$ almost everywhere in $\Omega$. This concludes the proof.
\end{proof}
\begin{remark}
Analogously to Remark \ref{remarkconvergenze}, from Lemma \ref{lemmastimehsing} and Lemma \ref{stronghsing} we deduce that if $p>2-\frac{1}{N}$ then $u_n$ converges to $u$ strongly in $W_{loc}^{1,q}(\Omega)$ for every $q<\frac{N(p-1)}{N-1}$. Otherwise if $1<p\leq2-\frac{1}{N}$ then $u_n^{p-1}$ converges to $u^{p-1}$ strongly in $L^q_{loc}(\Omega)$ for every $q<\frac{N}{N-p}$ and	$|\nabla u_n|^{p-1}$ converges to $|\nabla u|^{p-1}$ strongly in $L^q_{loc}(\Omega)$ for every $q<\frac{N}{N-1}$. In all cases we have	
 \begin{equation}\label{passaggiolimitea}
 a(x,\nabla u_{n}) \rightarrow a(x,\nabla u) \text{ strongly in } L^{q}_{loc}(\Omega)^N \text{ for every } q<\frac{N}{N-1}.
 \end{equation}
\end{remark}

\section{Proof of the existence and uniqueness results}\label{mainresults}

In this section we first prove Theorem \ref{teoexistence}, and then Theorem \ref{teoexrinuniqueness} in full generality, namely for $\gamma>0$.
\\Indeed, in order to prove Theorem \ref{teoexrinuniqueness}, we need that the scheme of approximation actually takes to a distributional solution to \eqref{pbmain}, which is the content of Theorem \ref{teoexistence}.
\begin{proof}[Proof of Theorem \ref{teoexistence}]
Let $u_{n,m}$ be a renormalized solution to \eqref{pbapproxeps}. We need to prove that its almost everywhere limit $u$, whose existence is guaranteed by Lemma \ref{lemmastimehsing}, is a distributional solution to \eqref{pbmain}. 
\\It follows from Lemma \ref{bordo} that \eqref{troncate} holds. Hence we just need to show \eqref{distrdef}, namely we have to pass to the limit first in $m$ and then in $n$ the following weak formulation
\begin{equation}
\io a(x,\nabla u_{n,m})\cdot\nabla\varphi=\io H_n(u_{n,m})\varphi d\mu_d+\io H_n(u_{n,m})\varphi\mu_m,\;\ \ \ \forall\varphi\in C^1_c(\Omega).
\label{ndist}
\end{equation}
Thanks to \eqref{passaggiolimitea}, we are able to pass to the limit the first term on left hand side of the previous as $n,m\rightarrow\infty$. Now we pass to the right hand side of \eqref{ndist}. For $n\in \mathbb{N}$ fixed and proceeding as to deduce \eqref{genultima2.14mp}, we find that
$$\lim_{m\rightarrow\infty}\io H_n(u_{n,m})\varphi\mu_m=H_n(\infty)\io\varphi d\mu_c,$$
and, since for $n\in\NN$ large enough it results $H_n(\infty)=H(\infty)$, we get
$$\lim_{n\rightarrow\infty}\lim_{m\rightarrow\infty}\io H_n(u_{n,m})\varphi\mu_m=H(\infty)\io\varphi d\mu_c.$$
For the first term on the right hand side of \eqref{ndist} we observe that, by Lemma \ref{stronghsing}, it yields that $T_k(u_{n,m})$ strongly converges to $T_k(u)$ in $W^{1,p}_{loc}(\Omega)$. This implies (see Lemma $3.5$ of \cite{kkm}) that $T_k(u_{n,m})$ converges to $T_k(u)$ cap$_{p}$-almost everywhere in $\omega$ for each $k>0$ fixed and for $\omega \subset \subset \Omega$. Being $u_{n,m}$ and $u$ cap$_{p}$-almost everywhere finite functions, we deduce that $u_{n, m}$ converges cap$_{p}$-almost everywhere to $u$ in $\omega$ for each $\omega\subset\subset\Omega$. Hence $H_n(u_{n,m})$ converges to $H(u)$ cap$_{p}$-almost everywhere in $\supp (\varphi)$. Thus we are in position to apply the Lebesgue Theorem for general measure since 
$$|H_n(u_{n,m}) \varphi |  \le \|H\|_{L^{\infty}([c_{\text{supp} (\varphi)},\infty))} \|\varphi\|_{L^\infty(\Omega)} \in L^1(\Omega,\mu_d),$$
where we have used that, by Lemma \ref{locpos}, $u_{n,m}\geq c_{\supp(\varphi)}$ cap$_p$-almost everywhere on $\supp(\varphi)$ for $n$ and $m$ large enough. Hence we have proved that it results
$$\lim_{n,m\to\infty}\io H_n(u_{n,m})\varphi d\mu_d=\io H(u)\varphi d\mu_d,$$
and then $u$ is a distributional solution to \eqref{pbmain}. This concludes the proof.
\end{proof}
\begin{proof}[Proof of Theorem \ref{teoexrinuniqueness} in case $\gamma>0$]
Let $u_{n,m}$ be a renormalized solution to \eqref{pbapproxeps}, then it follows from the proof of Theorem \ref{teoexistence} that its almost everywhere limit $u$ is a distributional solution to \eqref{pbmain}. We have that $u_{n,m}$ is such that
\begin{align}\label{new1}
&\io a(x,\nabla u_{n,m})\cdot\nabla\varphi S(u_{n,m}) + \io a(x,\nabla u_{n,m})\cdot \nabla u_{n,m}S'(u_{n,m})\varphi\\
&=\io H_n(u_{n,m})S(u_{n,m})\varphi d\mu_d+\io H_n(u_{n,m})S(u_{n,m})\varphi\mu_m,\nonumber
\end{align}	
where $S\in W^{1,\infty}(\mathbb{R})$ with supp$(S)$ $\subset [-M,M]$ and $\varphi\in C^1_c(\Omega)$.
\\As regards the left hand side of \eqref{new1}, since, by Lemma \ref{stronghsing}, $T_M(u_{n,m})$ strongly converges to $T_M(u)$ in $W^{1,p}_{loc}(\Omega)$, by \eqref{cara2} and Vitali's Theorem, we obtain
\begin{align*}
\lim_{n\to\infty}\lim_{m\to\infty}&\left(\io a(x,\nabla u_{n,m})\cdot\nabla\varphi S(u_{n,m})+\io a(x,\nabla u_{n,m})\cdot \nabla u_{n,m}S'(u_{n,m})\varphi\right)\\
&=\io a(x,\nabla u)\cdot \nabla\varphi S(u)+\io a(x,\nabla u)\cdot \nabla u S'(u)\varphi.
\end{align*}
For the first term on the right hand side of \eqref{new1} we observe that, using once again Lemma \ref{locpos}, it results
$$H_n(u_{n,m}) S(u_{n,m})\varphi \le \|H\|_{L^{\infty}([c_{\text{supp} (\varphi)},\infty))} \|\varphi\|_{L^\infty(\Omega)} \|S\|_{L^\infty(\mathbb{R})}\in L^1(\Omega,\mu_d).$$
Then, thanks to the cap$_p$-almost everywhere convergence of $u_{n,m}$ to $u$, we can apply the Lebesgue Theorem for general measure, obtaining
$$\lim_{n\to\infty}\lim_{m\to\infty}\io H_n(u_{n,m})S(u_{n,m})\varphi d\mu_d=\io H(u)S(u)\varphi d\mu_d.$$
For the second term on the right hand side of \eqref{new1} we have, proceeding as in the proof of Theorem 3.4 in the case $\gamma=0$, that there exist $k>0$ and $c_k>0$ such that $S(s)\leq c_k(k-s)^+$ for every $s\in\RR$ and 
\begin{equation}\label{app1}
\begin{aligned}
\io H_n(u_{n,m})S(u_{n,m})\varphi\mu_m &\leq c_k\|\varphi\|_{L^{\infty}(\Omega)}\io H_n(u_{n,m})(k-u_{n,m})^+\Psi_\nu\mu_m \\
&+\|H\|_{L^{\infty}([c_{\text{supp}(\varphi)},\infty))}\|S\|_{L^{\infty}(\mathbb{R})}\|\varphi\|_{L^{\infty}(\Omega)}\io(1-\Psi_\nu)\mu_m. 
\end{aligned}
\end{equation}
Using $S(s)=(k-|s|)^+$ and $\varphi=\Psi_\nu$ in the renormalized formulation of \eqref{pbapproxeps} and dropping positive terms we obtain
\begin{equation}\label{app2}
\begin{aligned}
\io H_n(u_{n,m})(k-u_{n,m})^+\Psi_\nu\mu_m &\leq \io a(x,\nabla T_k(u_{n,m}))\cdot \nabla\Psi_\nu (k-u_{n,m})^+ \\
&\leq k \|T_k(u_{n,m})\|_{W^{1,p}(\operatorname{supp}(\Psi_\nu))} \|\Psi_\nu\|_{\sob}. 
\end{aligned}
\end{equation}
Then, from \eqref{app1} and \eqref{app2}, we deduce, applying Lemma \ref{dalmaso}, Lemma \ref{bordo}, Remark \ref{gamma>1} and letting $\nu\to0$, that
$$\lim_{n\to\infty}\lim_{m\to\infty}\io H_n(u_{n,m})S(u_{n,m})\varphi\mu_m=0.$$
Hence we have proved
\begin{align}
\label{new2}
&\io a(x,\nabla u)\cdot\nabla\varphi S(u) + \io a(x,\nabla u)\cdot \nabla u S'(u)\varphi=\io H(u)S(u)\varphi d\mu_d,
\end{align}
for every $S \in W^{1,\infty}(\mathbb{R})$ with compact support and for every $\varphi\in C^1_c(\Omega)$, namely \eqref{ren1} for a smaller class of test functions $\varphi\in C^1_c(\Omega)$. Note that \eqref{new2} holds true also if $\gamma>1$.
\\Now we take $S=\theta_t$ in \eqref{new2} and we obtain
\begin{align*}
\frac{1}{t}\int_{\{t< u< 2t\}}a(x,\nabla u)\cdot\nabla u \varphi=-\io H(u)\theta_{t}(u)\varphi d\mu_{d}+\io a(x,\nabla u)\cdot\nabla\varphi \theta_{t}(u).
\end{align*}
We pass to the limit in $t$ obtaining
\begin{align*}
\lim_{t\to\infty}\frac{1}{t}\int_{\{t< u< 2t\}}a(x,\nabla u)\cdot\nabla u \varphi=-\io H(u)\varphi d\mu_{d}+\io a(x,\nabla u)\cdot\nabla\varphi,
\end{align*}
which implies, since $u$ is a distributional solution to \eqref{pbmain}, that
\begin{equation}\label{new3}
\lim_{t\to\infty}\frac{1}{t}\int_{\{t< u< 2t\}}a(x,\nabla u)\cdot\nabla u \varphi=H(\infty)\io \varphi d\mu_{c} \qquad \forall \varphi\in C^1_c(\Omega).
\end{equation}
By the density of $C^1_c(\Omega)$ in $C_c(\Omega)$, \eqref{new3} is true when $\varphi\in C_c(\Omega)$. Now, if $\varphi\in C_b(\Omega)$, we have $\varphi\Psi_\nu\in C_c(\Omega)$ and then
\begin{equation}\label{new4}
\lim_{t\to\infty}\frac{1}{t}\int_{\{t< u< 2t\}}a(x,\nabla u)\cdot\nabla u \Psi_\nu\varphi=H(\infty)\io \varphi\Psi_\nu d\mu_{c}\qquad \forall \varphi\in C_b(\Omega).
\end{equation}
We want to prove that
\begin{equation}\label{new5}
\lim_{t\to\infty}\frac{1}{t}\int_{\{t< u< 2t\}}a(x,\nabla u)\cdot\nabla u(1-\Psi_\nu)\varphi=\epsilon(\nu)\qquad \forall \varphi\in C_b(\Omega).
\end{equation}
Choosing in the renormalized formulation of \eqref{pbapproxeps} $\varphi=\pi_t(u_{n,m})(1-\Psi_{\nu})$ and $S=\theta_r$, with $t>1$, we obtain 
\begin{align}\label{newabcd}
&\frac{1}{t}\int_{\{t<u_{n,m}<2t\}}a(x,\nabla u_{n,m})\cdot\nabla u_{n,m}\theta_r(u_{n,m})(1-\Psi_{\nu})\nonumber\\
=&\frac{1}{r}\int_{\{r<u_{n,m}<2r\}}a(x,\nabla u_{n,m})\cdot\nabla u_{n,m}\pi_t(u_{n,m})(1-\Psi_{\nu}) \ \ \ (a)\nonumber \\
&+\io H_n(u_{n,m})\pi_t(u_{n,m})\theta_r(u_{n,m})(1-\Psi_{\nu})d\mu_d \ \ \ (b) \\
&+\io H_n(u_{n,m})\pi_t(u_{n,m})\theta_r(u_{n,m})(1-\Psi_{\nu})\mu_m \ \ \ (c)\nonumber \\
&+\io a(x,\nabla u_{n,m})\cdot\nabla\Psi_{\nu}\pi_t(u_{n,m})\theta_r(u_{n,m}). \ \ \ (d) \nonumber
\end{align}
As concerns (d), thanks to the Lebesgue Theorem, we deduce 
\begin{equation*}
\lim_{r\to\infty}\io a(x,\nabla u_{n,m})\cdot\nabla\Psi_\nu\pi_t(u_{n,m})\theta_r(u_{n,m})=\io a(x,\nabla u_{n,m})\cdot\nabla\Psi_\nu\pi_t(u_{n,m}).
\end{equation*}
Recalling that $u$ is almost everywhere finite, that $|\nabla u_{n,m}|^{p-1}$ is bounded in $L^q(\omega)$ for each $q<\frac{N}{N-1}$ where $\omega:=$supp$(\Psi_\nu)$, using \eqref{cara2} and H\"older inequality with exponents $q$ and $q'$, with $1<q<\frac{N}{N-1}$ fixed, we find
\begin{align*}
\left|\io a(x,\nabla u_{n,m})\cdot\nabla\Psi_\nu\pi_t(u_{n,m})\right|&\leq\|\nabla\Psi_\nu\|_{L^\infty(\Omega)}\left(\int_\omega|\nabla u_{n,m}|^{(p-1)q}\right)^{\frac{1}{q}}\left|\{x\in \omega : u_{n,m}(x)\geq t\}\right|^\frac{1}{q'}\\
&\leq C\,\left|\{x\in \omega : u_{n,m}(x)\geq t\}\right|^\frac{1}{q'}=\epsilon(m,n,t).
\end{align*}
Then
\begin{equation}\label{newd}
(d) \le \epsilon(m,n,t).
\end{equation}
Concerning (b) and (c), once again by Lebesgue Theorem, we deduce that
\begin{align}
\label{newb}
\io H_n(u_{n,m})\pi_t(u_{n,m})\theta_r(u_{n,m})(1-\Psi_{\nu})d\mu_{d} &\le\|H\|_{L^{\infty}([1,+\infty))}\io \pi_t(u_{n,m})(1-\Psi_{\nu})d\mu_d \\
&=\epsilon(m,n,t), \nonumber
\end{align}
and that
$$\lim_{r\to\infty}\io H_n(u_{n,m})\pi_t(u_{n,m})\theta_r(u_{n,m})(1-\Psi_{\nu})\mu_m=\io H_n(u_{n,m})\pi_t(u_{n,m})(1-\Psi_{\nu})\mu_m.$$
By the narrow convergence of $\mu_{m}$ and Lemma \ref{dalmaso}, we obtain
\begin{equation}\label{newc}
\io H_n(u_{n,m})\pi_t(u_{n,m})(1-\Psi_{\nu})\mu_{m}\le \|H\|_{L^{\infty}([1,+\infty))}\io(1-\Psi_{\nu})\mu_{m}=\epsilon(m,\nu)
\end{equation}
Finally, by \eqref{r0}, we obtain
\begin{equation}\begin{aligned}\label{newa}
&\frac{1}{r}\int_{\{r<u_{n,m}<2r\}}a(x,\nabla u_{n,m})\cdot\nabla u_{n,m}\pi_t(u_{n,m})(1-\Psi_{\nu})\\
&\le \frac{1}{r}\int_{\{r<u_{n,m}<2r\}}a(x,\nabla u_{n,m})\cdot\nabla u_{n,m}=\epsilon(r).
\end{aligned}	
\end{equation}
Letting $r$ go to infinity in \eqref{newabcd} and using \eqref{newd}, \eqref{newb},\eqref{newc} and \eqref{newa}, we get
\begin{equation*}
\frac{1}{t}\int_{\{t<u_{n,m}<2t\}}a(x,\nabla u_{n,m})\cdot\nabla u_{n,m}(1-\Psi_{\nu})=\epsilon(m,n,t,\nu).
\end{equation*}
Then, by Vitali's Theorem, letting $m$, $n$ and $t$ go to infinity we deduce \eqref{new5}. As a consequence of \eqref{new4} and \eqref{new5}, letting $\nu$ go to zero, by Lemma \ref{dalmaso} we have
\begin{equation}\label{ener}
\lim_{t\to\infty}\frac{1}{t}\int_{\{t< u< 2t\}}a(x,\nabla u)\cdot\nabla u \varphi=H(\infty)\io \varphi d\mu_{c},
\end{equation}
for all $\varphi\in C_b(\Omega)$. Hence \eqref{ren2} holds and, in order to deduce that $u$ is a renormalized solution, we just need to show that \eqref{new2} holds for a larger class of test functions, namely for $\varphi \in W^{1,p}_0(\Omega)\cap L^\infty(\Omega)$. 
\\It follows from Remark \ref{remark<1} that $T_k(u) \in W^{1,p}_0(\Omega),$ for every $k>0$.
Now let $\phi_n \in C^1_c(\Omega)$ be a sequence of nonnegative functions that converges in $W^{1,p}_0(\Omega)$ to a nonnegative $v\in W^{1,p}_0(\Omega)\cap L^\infty(\Omega)$ and let $\rho_\eta$ be a smooth mollifier. We take $\varphi=\rho_\eta \ast (v \wedge \phi_n)\in C^1_c(\Omega)$ in \eqref{new2} where $v \wedge \phi_n:= \inf (v,\phi_n)$, obtaining
\begin{equation}\label{ren1c1c_2}\begin{aligned}
\int_{\Omega}& a(x,\nabla u)\cdot\nabla (\rho_\eta \ast (v \wedge \phi_n)) S(u) + \int_{\Omega}a(x,\nabla u)\cdot\nabla u S'(u)\rho_\eta \ast (v \wedge \phi_n)\\
 = & \int_{\Omega}H(u)S(u)(\rho_\eta \ast (v \wedge \phi_n)) d\mu_d.
\end{aligned}\end{equation}
We assume that $\supp(S) \subset [-M,M]$ and we analyze the three terms in \eqref{ren1c1c_2} separately. 
\\As concerns the first term on the left hand side of \eqref{ren1c1c_2}, using that 
$$a(x,\nabla u)S(u)=a(x,\nabla T_M(u))S(T_M(u))\in L^{p'}(\Omega)^N,$$ 
that $\rho_\eta \ast (v \wedge \phi_n)$ strongly converges to $v \wedge \phi_n$ in $W^{1,p}_0(\Omega)$ as $\eta\to0$ and that $v \wedge \phi_n$ strongly converges to $v$ in $W^{1,p}_0(\Omega)$ as $n\to\infty$, we deduce
\begin{equation}\label{I}
\int_{\Omega} a(x,\nabla u)\cdot\nabla (\rho_\eta \ast (v \wedge \phi_n)) S(u)=\int_{\Omega} a(x,\nabla u)\cdot\nabla vS(u)+\epsilon(\eta,n).
\end{equation} 
We consider now the second term on the left hand side of \eqref{ren1c1c_2}. Since 
$$a(x,\nabla u)\cdot \nabla u S'(u)=a(x,\nabla T_M(u))\cdot\nabla T_M(u) S'(T_M(u))\in L^1(\Omega)$$ 
and $\rho_\eta \ast (v \wedge \phi_n)$ converges to $v$ weakly* in $L^\infty(\Omega)$ as $\eta\to0$ and $n\to\infty$, we have that 
\begin{equation}\label{II}
\int_{\Omega}a(x,\nabla u)\cdot\nabla u S'(u)\rho_\eta \ast (v \wedge \phi_n)=\int_{\Omega} a(x,\nabla u)\cdot\nabla uS'(u)v+\epsilon(\eta,n).
\end{equation} 
\\Finally we consider the right hand side of \eqref{ren1c1c_2}. Since $\rho_\eta \ast (v \wedge \phi_n)$ converges to $v\wedge\phi_n$ cap$_p$-almost everywhere as $\eta\to0$ and the following inequality holds true cap$_p$-almost everywhere
$$H(u)S(u)(\rho_\eta \ast (v \wedge \phi_n))\leq \|H\|_{L^\infty([c_{\supp(\phi_n)},\infty))}\|S\|_{L^\infty(\RR)}\|v \wedge \phi_n\|_{L^\infty(\Omega)}\in L^1(\Omega,\mu_d)\;$$
by Lebesgue's Theorem for general measure we find
\begin{equation}\label{III}
\int_{\Omega}H(u)S(u)(\rho_\eta \ast (v \wedge \phi_n)) d\mu_d=\int_{\Omega}H(u)S(u)(v \wedge \phi_n) d\mu_d+\epsilon(\eta).
\end{equation}
Hence, putting together \eqref{I}, \eqref{II} and \eqref{III}, we find
\begin{equation}\label{IV}
\int_{\Omega} a(x,\nabla u)\cdot\nabla vS(u)+\int_{\Omega} a(x,\nabla u)\cdot\nabla uS'(u)v=\int_{\Omega}H(u)S(u)(v \wedge \phi_n) d\mu_d+\epsilon(\eta,n)
\end{equation}
Now, since we can write $S$ as $S^+-S^-$, where $S^+$ and $S^-$ are the positive and the negative part of $S$, we can assume, without loss of generality, that $S\geq0$. In particular, $H(u)S(u)(v \wedge \phi_n)$ is a sequence of nonnegative and $\mu_d$-measurable functions (recall that $\phi_n$ has compact support for each $n\in\NN$) that converges cap$_p$-almost everywhere to $H(u)S(u)v$. Hence we can apply Fatou's Lemma in \eqref{IV} obtaining 
\begin{align*}
&\io H(u)S(u)v d\mu_d\leq\liminf_{n\to\infty}\int_{\Omega}H(u)S(u)(v \wedge \phi_n) d\mu_d\\
&=\int_{\Omega} a(x,\nabla u)\cdot\nabla vS(u)+\int_{\Omega} a(x,\nabla u)\cdot\nabla uS'(u)v+\epsilon(\eta,n). 
\end{align*}
The latter one implies that 
$$H(u)S(u)v\in L^1(\Omega,\mu_d)\quad\forall v\in W^{1,p}_0(\Omega)\cap L^\infty(\Omega)\;\text{s.t.}\;v\geq0.$$
Then, since 
$$H(u)S(u)(v\wedge \phi_n)\underset{n\to\infty}{\longrightarrow}H(u)S(u)v\quad\mu_d\text{-a.e.}$$
and 
$$H(u)S(u)(v\wedge \phi_n)\leq H(u)S(u)v\quad\mu_d\text{-a.e.}$$
by Lebesgue's Theorem we deduce that
$$\lim_{n\to\infty}\int_{\Omega}H(u)S(u)(v\wedge \phi_n) d\mu_d=\int_{\Omega}H(u)S(u)vd\mu_d.$$
In conclusion, passing to the limit first as $\eta\to0$ and then as $n\to\infty$ in \eqref{IV}, we obtain 
\begin{equation}\label{ren3c1c}
\int_{\Omega} a(x,\nabla u)\cdot\nabla vS(u) + \int_{\Omega}a(x,\nabla u)\nabla u S'(u)v = \int_{\Omega}H(u)S(u)v d\mu_d
\end{equation}
for every $S \in W^{1,\infty}(\mathbb{R})$ with compact support and for every nonnegative $v\in W^{1,p}_0(\Omega)\cap L^\infty(\Omega)$. Since it is possible to write each $v\in W^{1,p}_0(\Omega)\cap L^\infty(\Omega)$ as the difference between its positive and its negative part (as done before for the test function $S$), we trivially deduce that \eqref{ren3c1c} holds for all $v\in W^{1,p}_0(\Omega)\cap L^\infty(\Omega)$. Hence, recalling also \eqref{ener}, we conclude that $u$ is a renormalized solution to \eqref{pbmain}.
\\Once again, if $H$ is non-increasing and $\mu_c\equiv0$, uniqueness easily follows adapting  the proof of Theorem $2.11$ of \cite{mupo} to the case of a general $p>1$. This concludes the proof.
\end{proof}

\section{Some remarks when $H$ degenerates}\label{H=0}

It is worth to analyze more in depth what kind of phenomena could appear in case of a nonnegative function $H$, namely if we remove the request of strict positivity for $H$. 
\\We recall that the problem is given by
\begin{equation}
\begin{cases}
\displaystyle -\operatorname{div}(a(x,\nabla u)) = H(u)\mu &  \text{in}\, \Omega, \\
u=0 & \text{on}\ \partial \Omega.
\label{pbfinal}
\end{cases}
\end{equation}
Here we assume that $\mu$ is a nonnegative bounded Radon measure on $\Omega$ such that $\mu_c\equiv 0$ and that the function $a$ satisfies \eqref{cara1}, \eqref{cara2} and \eqref{cara3}. Concerning the function $H:(0,+\infty)\to [0,+\infty)$, we will assume that is continuous, such that \eqref{h} and \eqref{h1} hold and that it is zero for some $s>0$.  
\\ We will prove that, under these assumptions on the lower order term, there exists a solution to \eqref{pbfinal} that is bounded  and that belongs, at least locally, to the energy space. 
\\This kind of remark was already given in \cite{dcgop} for more regular data. We state the results and give just a brief idea of the proofs. 
\begin{theorem}\label{teoexrinuniquenessfinal}
Let us assume that $\mu_c\equiv0$ and that $0\leq\gamma\leq1$. If $s_1>0$ is the smallest positive value such that $H(s_1)=0$, then there exists a renormalized solution $u$ to \eqref{pbfinal} with $u\in W^{1,p}_0(\Omega)\cap L^\infty(\Omega)$ and $\|u\|_{L^\infty(\Omega)}\le s_1$.
\end{theorem}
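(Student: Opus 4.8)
The plan is to reduce the degenerate problem to the existence theory already developed by switching the nonlinearity off above the level $s_1$, and then to recover \eqref{pbfinal} from an a priori $L^\infty$ bound. I would first introduce the truncated nonlinearity
$$\overline{H}(s):=H(T_{s_1}(s)),$$
which is continuous and nonnegative, coincides with $H$ on $(0,s_1]$, vanishes identically on $[s_1,+\infty)$, and satisfies \eqref{h} (now with $\overline{H}(\infty)=0$) and \eqref{h1} with the same $\gamma\le1$. Since in this section $\mu_c\equiv0$, the strict positivity of the nonlinearity is no longer needed: it entered the previous proofs only through the treatment of the concentrated datum (for instance in the inequality \eqref{H2} and in the subsolution construction of Lemma \ref{locpos}). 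Hence the approximation scheme and all the convergence lemmas of Sections \ref{exgamma=0} and \ref{sec:maggiore0} apply with $\overline{H}$ in place of $H$, producing the almost everywhere limit $u$ of bounded weak solutions $u_n$ of
$$-\operatorname{div}(a(x,\nabla u_n))=\overline{H}_n(u_n)\mu_n,\qquad \overline{H}_n:=T_n(\overline{H}),$$
where $\mu_n$ is a bounded approximation of $\mu_d$; this $u$ is a renormalized solution to \eqref{pbfinal} with $\overline{H}$.

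The crux is the uniform bound $u_n\le s_1$, which follows from a single barrier computation. Testing the weak formulation with $G_{s_1}(u_n)=(u_n-s_1)^+\in \sob\cap L^\infty(\Omega)$ gives
$$\io a(x,\nabla u_n)\cdot\nabla (u_n-s_1)^+=\io \overline{H}_n(u_n)(u_n-s_1)^+\mu_n.$$
On $\{u_n>s_1\}$ one has $\overline{H}_n(u_n)=T_n(\overline{H}(u_n))=0$, so the right-hand side vanishes; by \eqref{cara1} the left-hand side dominates $\alpha\io|\nabla (u_n-s_1)^+|^p$. Therefore $(u_n-s_1)^+\equiv0$, i.e. $0\le u_n\le s_1$ almost everywhere, uniformly in $n$. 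In particular $u_n=T_{s_1}(u_n)$, so the truncation estimate \eqref{Tkunif} (equivalently \eqref{g<=1} with $k=s_1$) bounds $u_n$ uniformly in $\sob$; passing to the limit yields $u\in\sob\cap L^\infty(\Omega)$ with $\|u\|_{L^\infty(\Omega)}\le s_1$.

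Finally I would identify $u$ with a solution of the original problem and check its positivity. Since $0\le u\le s_1$ and $\overline{H}\equiv H$ on $[0,s_1]$, we have $\overline{H}(u)=H(u)$ almost everywhere, so the renormalized identities \eqref{ren1}--\eqref{ren2} for $\overline{H}$ (with $\mu_c\equiv0$) are exactly those for $H$, and $u$ solves \eqref{pbfinal}. For positivity, note that $u\in\sob$ cannot equal the nonzero constant $s_1$, so $\{u<s_1\}$ carries positive $\mu_d$-mass; there $H(u)=\overline{H}(u)>0$, making the source a nontrivial nonnegative measure, and local positivity $u>0$ then follows as in Lemma \ref{locpos}.

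The step I expect to require the most care is not the barrier estimate, which is immediate, but the bookkeeping behind the first paragraph: one must revisit every point in Sections \ref{exgamma=0} and \ref{sec:maggiore0} where the strict positivity $H>0$ was invoked and confirm that each such use was confined to the concentrated part $\mu_c$, now absent by hypothesis. Once this is verified, the construction for $\overline{H}$ is word-for-word the proof of Theorem \ref{teoexrinuniqueness} in the regime $\gamma\le1$, and the a priori bound upgrades the conclusion to a bounded, globally finite-energy solution.
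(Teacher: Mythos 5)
Your overall construction is the same as the paper's: your $\overline H$ is exactly the truncated nonlinearity $\h$ of Section \ref{H=0}, the test function $G_{s_1}(u_n)$ giving $0\le u_n\le s_1$ is exactly the paper's barrier step, and the identification with the original problem together with the resulting uniform bound in $W^{1,p}_0(\Omega)\cap L^\infty(\Omega)$ proceed as in the paper. However, there is a genuine flaw in the claim on which your whole reduction rests, namely that strict positivity of $H$ ``entered the previous proofs only through the treatment of the concentrated datum'', with the subsolution construction of Lemma \ref{locpos} cited as an instance of that treatment. That attribution is wrong: Lemma \ref{locpos} has nothing to do with $\mu_c$. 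Its comparison problem \eqref{pbv} is driven by the diffuse part, $-\operatorname{div}(a(x,\nabla v))=h(v)\mu_d$, and the strict positivity of $H$ near the origin is precisely what allows one to build the nontrivial, non-increasing, continuous minorant $h\le H_n$. So your verification plan (``confirm each use of positivity was confined to $\mu_c$'') fails at exactly this lemma. Moreover, for $0<\gamma\le 1$ this lemma cannot be bypassed: the uniform local positivity \eqref{posunif} of the \emph{approximants} is what yields the domination $H_n(u_{n,m})\le \|H\|_{L^\infty([c_\omega,\infty))}$ on $\omega\subset\subset\Omega$, needed to pass to the limit in the singular right-hand side (Lemma \ref{lemmastimehsing} and the proof of Theorem \ref{teoexistence}). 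Hence positivity cannot be deferred to the limit function as in your closing paragraph; that closing argument is also incomplete in itself, since $u\not\equiv s_1$ does not immediately give $\mu_d(\{u<s_1\})>0$, and Lemma \ref{locpos} is a statement about the approximating sequence, not about a function already known to solve the limit equation.

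The gap is fixable, and the fix is exactly what the paper devotes its effort to: since $s_1$ is the \emph{smallest} zero of $H$, one still has $H>0$ on $(0,s_1)$, so the minorant can be rebuilt with support strictly below $s_1$; the paper takes $s^*\in[0,s_0]$ with $H(s^*)=\min_{[0,s_0]}H>0$ and defines $h$ to equal $H(s^*)$ on $[0,s^*)$, to decrease linearly to $0$ on $[s^*,s_0]$, and to vanish on $(s_0,\infty)$. Note that such a modification is genuinely necessary for your $\overline H$: any nonnegative non-increasing minorant of $\overline H_n$ must vanish on $[s_1,\infty)$, so whatever minorant served for the everywhere-positive $H$ of Sections \ref{exgamma=0} and \ref{sec:maggiore0} does not carry over verbatim. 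With this single repair, and with the positivity of the approximants established \emph{before} rather than after the passage to the limit, your argument coincides with the paper's proof.
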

\begin{theorem}\label{teoexistencefinal}
Let us assume that $\mu_c\equiv0$. If $s_1>0$ is the smallest positive value such that $H(s_1)=0$, then there exists a distributional solution $u$ to \eqref{pbfinal} with $u\in W^{1,p}_{loc}(\Omega)\cap L^\infty(\Omega)$ and $\|u\|_{L^\infty(\Omega)}\le s_1$. 
\end{theorem}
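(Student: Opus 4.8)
The plan is to reduce to the existence theory already developed by switching the nonlinearity off above $s_1$, and then to exploit the vanishing of the datum where $u$ reaches $s_1$ to obtain the $L^\infty$ bound. First I would set $\overline{H}(s):=H(\min(s,s_1))$, so that $\overline{H}$ is continuous and nonnegative on $(0,+\infty)$, agrees with $H$ on $(0,s_1]$ and is identically zero on $[s_1,+\infty)$; in particular it still satisfies \eqref{h} (now with $\overline{H}(\infty)=0$) and \eqref{h1} with the same $\gamma$. Since here $\mu_c\equiv0$, the strict positivity of $H$ — which in Section \ref{sec:maggiore0} served only to treat the concentrated part of $\mu$ — is no longer needed: the local positivity of the approximating solutions (Lemma \ref{locpos}) relies only on the blow up of $\overline{H}$ at the origin, so one may still build a non-increasing minorant $h\le T_n(\overline{H})$, capped at the threshold $n_0$ and strictly positive near $0$, and the comparison argument goes through unchanged. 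Running then the scheme \eqref{pbapprox} with $\mu_c\equiv0$ and $\overline{H}_n:=T_n(\overline{H})$, Lemma \ref{lemmastimehsing} and the proof of Theorem \ref{teoexistence} furnish a nonnegative distributional solution $u\in W^{1,p}_{loc}(\Omega)$ of $-\operatorname{div}(a(x,\nabla u))=\overline{H}(u)\mu_d$.

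The core of the argument is the bound $\|u\|_{L^\infty(\Omega)}\le s_1$, which I would establish on the approximating solutions $u_n$ of \eqref{pbapprox}. In the renormalized formulation I take $S=\theta_t$ with $t>s_1+j$ and $\varphi=T_j(G_{s_1}(u_n))$. The right hand side vanishes because $G_{s_1}(u_n)$ is supported in $\{u_n>s_1\}$, where $\overline{H}_n\equiv0$; hence the identity collapses to $\int_{\{s_1<u_n<s_1+j\}}a(x,\nabla u_n)\cdot\nabla u_n=\frac{j}{t}\int_{\{t<u_n<2t\}}a(x,\nabla u_n)\cdot\nabla u_n$, whose right-hand side tends to $0$ as $t\to\infty$ by \eqref{r0}. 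Coercivity \eqref{cara1} then forces $\nabla T_j(G_{s_1}(u_n))=0$; since $T_j(G_{s_1}(u_n))$ vanishes near $\partial\Omega$ (where $u_n<s_1$) and its gradient lives where $u_n$ is bounded away from zero — so that Lemma \ref{bordo} controls it even for $\gamma>1$ — it belongs to $W^{1,p}_0(\Omega)$, and therefore $T_j(G_{s_1}(u_n))\equiv0$ for every $j>0$, i.e. $u_n\le s_1$ almost everywhere. Passing to the almost everywhere limit gives $u\le s_1$.

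It then remains to read off the conclusion. Since $0\le u\le s_1$ we have $\overline{H}(u)=H(\min(u,s_1))=H(u)$, so $u$ solves the original problem \eqref{pbfinal}; moreover $u=T_{s_1}(u)$ together with the local truncation estimate of Lemma \ref{lemmastimehsing} yields $u\in W^{1,p}_{loc}(\Omega)\cap L^\infty(\Omega)$ with $\|u\|_{L^\infty(\Omega)}\le s_1$, as claimed. (The companion Theorem \ref{teoexrinuniquenessfinal}, where $\gamma\le1$, follows from the same scheme: the truncation estimates now hold globally, the limit $u$ is a renormalized solution, and the bound upgrades the regularity to $u\in W^{1,p}_0(\Omega)$.) I expect the two delicate points to be, first, checking that the whole existence machinery — local positivity and the passage to the limit — indeed survives the loss of strict positivity of the now merely nonnegative $\overline{H}$, and second, the admissibility of $G_{s_1}(u_n)$ as a test function in the singular regime $\gamma>1$, where $u_n$ is only locally of finite energy; both are handled by the observation that $G_{s_1}$ discards precisely the region where $u_n$ is small.
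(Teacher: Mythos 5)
Your proposal is correct and rests on the same core ideas as the paper's proof: the cutoff nonlinearity $\overline H(s)=H(\min(s,s_1))$ is exactly the paper's $\h$ of \eqref{H*}, the observation that strict positivity of $H$ enters only through the concentrated part of $\mu$ (so that Lemma \ref{locpos} survives via an explicit non-increasing minorant supported in $[0,s_0]$, $s_0<s_1$) is the paper's opening remark in Section \ref{H=0}, and the $L^\infty$ bound comes from the same mechanism, namely that testing with a $G_{s_1}$-type function annihilates the right hand side. The execution differs in one respect: the paper first proves Theorem \ref{teoexrinuniquenessfinal} for $\gamma=0$, so that the approximating problems \eqref{pbapprox*} (with $\h_n=T_n(\h)$) admit solutions $u^*_n\in W^{1,p}_0(\Omega)\cap L^\infty(\Omega)$, for which $G_{s_1}(u^*_n)$ is trivially an admissible test function; it then gets the energy bounds by testing with $u^*_n$ (if $\gamma\le1$) or $(u^*_n)^\gamma$ (if $\gamma>1$). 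You instead prove the bound $u_n\le s_1$ directly in the renormalized formulation of the singular scheme with $S=\theta_t$, $\varphi=T_j(G_{s_1}(u_n))$, and recover $W^{1,p}_{loc}(\Omega)$ from $u=T_{s_1}(u)$ and the truncation estimates of Lemma \ref{bordo}/Remark \ref{gamma>1}; this is more direct but puts the burden on the admissibility of your test function when $\gamma>1$, where $u_n$ has only locally finite energy. Your justification of that point is the one loose step: ``$T_j(G_{s_1}(u_n))$ vanishes near $\partial\Omega$'' is not a rigorous argument, since the boundary condition holds only in a generalized sense. The correct repair is the one you gesture at: by Lemma \ref{bordo}, $w_n:=T_{s_1+j}^{\frac{\gamma-1+p}{p}}(u_n)\in W^{1,p}_0(\Omega)$, and $T_j(G_{s_1}(u_n))=\Psi(w_n)$ where $\Psi(\sigma):=T_j\big(G_{s_1}\big(\sigma^{\frac{p}{\gamma-1+p}}\big)\big)$ is Lipschitz on $[0,\infty)$ and vanishes on $\big[0,s_1^{\frac{\gamma-1+p}{p}}\big]$, so the composition lies in $W^{1,p}_0(\Omega)\cap L^\infty(\Omega)$; with this fix, your computation (vanishing right hand side, the $\frac{j}{t}\int_{\{t<u_n<2t\}}a(x,\nabla u_n)\cdot\nabla u_n$ term tending to zero because the datum of \eqref{pbapprox} has no concentrated part, then coercivity and Poincar\'e) is sound, and the rest of the argument, including the identification $\overline H(u)=H(u)$ once $u\le s_1$, matches the paper.
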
 
Our first observation is that the assumption $H(s)>0$ for all $s\ge 0$ is used in the proof of Theorems \ref{teoexrinuniqueness} and \ref{teoexistence} only to show that the solution blows up on the support of $\mu_c$ (see \eqref{H2}).
\\Hence, if $\mu_c\equiv 0$, the proofs of Theorems \ref{teoexrinuniqueness} and \ref{teoexistence} remain valid even if $H$ is just nonnegative and, in order to prove Theorems \ref{teoexrinuniquenessfinal} and \ref{teoexistencefinal}, we only need to show the improvement in the regularity of the solution. 
\\Precisely, we will show that, under these assumptions on the lower order term, the schemes of approximation \eqref{pbapproxlimitata} and \eqref{pbapprox} (i.e. the approximations that led us to the existence results, respectively, in case $\gamma=0$ and $\gamma>0$), admit a sequence of solutions that is, respectively, bounded in $W^{1,p}_0(\Omega) \cap L^\infty(\Omega)$ if $\gamma\le 1$ and in $W^{1,p}_{loc}(\Omega) \cap L^\infty(\Omega)$ if $\gamma> 1$.
\\ \\ We recall that the scheme of approximation \eqref{pbapproxlimitata}, used in the case $\gamma=0$, is given by
\begin{equation}\begin{cases}
\displaystyle -\operatorname{div}(a(x,\nabla u_n)) = H(u_n)\mu_n &  \text{in}\, \Omega, \\
u_n=0 & \text{on}\ \partial \Omega,
\label{pbapproxlimitatafinal}
\end{cases}\end{equation}
where $H$ is bounded and $\mu_n=\mu_{n,d}\in L^\infty(\Omega)$ is bounded in $L^1(\Omega)$ and such that \eqref{approxmisura} holds.
\\We define on $[0,+\infty)$ the continuous function $\h$ as follows
\begin{equation}\label{H*}
\h(s)=\begin{cases}
H(s) \quad&\text{if}\;s<s_1,\\
0\quad&\text{if}\;s\geq s_1,
\end{cases}
\end{equation}
and we consider the following problem
\begin{equation}\begin{cases}
\displaystyle -\operatorname{div}(a(x,\nabla u^*_n)) =\h(u^*_n)\mu_{n} &  \text{in}\, \Omega, \\
u^*_n=0 & \text{on}\ \partial \Omega.
\label{pbapprox*1}
\end{cases}
\end{equation}
The latter problem has a weak solution $u^*_n\in W^{1,p}_0(\Omega)$, that is also nonnegative.
Now taking $G_{s_1}(u^*_n)$ as test function in \eqref{pbapprox*1}, we immediately find
$$\io|\nabla G_{s_1}(u^*_n)|^p=0$$ 
which implies $u^*_n\leq s_1$ almost everywhere in $\Omega$. Hence, recalling \eqref{H*}, we conclude that $u^*_n$ solves also \eqref{pbapproxlimitatafinal}. Moreover, having in mind the $L^\infty$-estimate for $u^*_n$ and taking $u^*_n$ itself as test function in the weak formulation of \eqref{pbapproxlimitatafinal}, we deduce that $u^*_n$ is bounded in $W^{1,p}_0(\Omega)$. This is sufficient to deduce Theorem \ref{teoexrinuniquenessfinal} if $\gamma=0$.
\\ \\The scheme of approximation introduced to prove Theorems \ref{teoexrinuniqueness} and \ref{teoexistence}
in the case $\gamma> 0$ is instead given by
\begin{equation}\begin{cases}
\displaystyle -\operatorname{div}(a(x,\nabla u_{n})) = H_n(u_{n})\mu_d  &  \text{in}\, \Omega, \\
u_{n}=0 & \text{on}\ \partial \Omega,
\label{pbapproxfinal}
\end{cases}
\end{equation}
where $H_n= T_n(H)$. In this case we consider the following problem
\begin{equation}\begin{cases}
\displaystyle -\operatorname{div}(a(x,\nabla u^*_n)) =\h_n(u^*_n)\mu_d &  \text{in}\, \Omega, \\
u^*_n=0 & \text{on}\ \partial \Omega,
\label{pbapprox*}
\end{cases}
\end{equation}
with $\h_n(s)=T_n(\h(s))$ for each $n\in\NN$. Applying Theorem \ref{teoexrinuniquenessfinal} in the case $\gamma=0$, we deduce that, if $n\in \mathbb{N}$ is fixed, there exists a renormalized solution $u^*_n\in W^{1,p}_0(\Omega)\cap L^\infty(\Omega)$ to \eqref{pbapprox*}. 
\\To prove the positivity of the sequence $u^*_n$, proceeding as done to deduce \eqref{posunif}, it is sufficient to construct on $[0,+\infty)$ a nonnegative function $h$ that is not identically zero, non-increasing, continuous, bounded and such that 
$$h(s)\leq \h_n(s)\;\text{for all $s>0$ and for $n$ large enough.}$$
Since $H(s)$ is continuous for each $s>0$ and $s_1$, with $s_1>s_0>0$, is the smallest zero of $H$, there exists $s^*\in[0,s_0]$ such that
$$H(s^*)=\min_{[0,s_0]}H(s)>0.$$
A good candidate for $h$ is then the following function
$$h(s)=\begin{cases}
H(s^*)\quad&\text{if}\;0\leq s< s^*,\\
\dis\frac{H(s^*)}{(s_0-s^*)}(s_0-s)\quad&\text{if}\;s^*\leq s\leq s_0,\\
0\quad&\text{if}\;s> s_0.
\end{cases}$$
From this point onwards, we can proceed as in Lemma \ref{locpos} to prove that 
$$\forall\;\omega\subset\subset\Omega\; \ \ \exists\; c_{\omega}>0: u^*_n\geq c_{\omega} \ \ \text{cap$_p$-a.e.} \;\ \text{in}\ \;\omega\;\text{for $n$ large enough}.$$Since, once again taking $G_{s_1}(u_n^*)$, it is possible to prove that $u^*_n\leq s_1$ almost everywhere in $\Omega$, the function $u^*_n$ turns out to be a solution to \eqref{pbapproxfinal}. 
\\Now we take as test function in the renormalized formulation of \eqref{pbapproxfinal} the following ones
$$\begin{cases}
S=\theta_r,\;\varphi=u^*_n\;&\text{if}\;\gamma\leq1,\\
S=\theta_r,\;\varphi=(u^*_n)^\gamma\;&\text{if}\;\gamma>1,
\end{cases}$$
where $r>0$.
\\ In case $\gamma\leq1$, as $r\to\infty$ we find
\begin{align*}
\alpha\int_{\Omega}|\nabla u^*_n|^p \le \int_{\Omega}a(x,\nabla u^*_n) \cdot  \nabla u^*_n &= \int_{\{u^*_n< s_0\}} H_n(u^*_n)u^*_nd\mu_d+\int_{\{u^*_n\geq s_0\}} H_n(u^*_n)u^*_nd\mu_d\\
&\leq\left(Cs_0^{1-\gamma}+\|H\|_{L^\infty([s_0,s_1))}s_1\right)\|\mu_d\|_{\mathcal{M}(\Omega)},
\end{align*}
namely that $u^*_n$ is bounded in $W^{1,p}_0(\Omega)$.
\\If $\gamma>1$, we find instead
\begin{align*}
\alpha\gamma c_\omega^{\gamma-1} \int_{\omega}|\nabla u^*_n|^p &\le \gamma\int_{\Omega}a(x,\nabla u^*_n) \cdot  \nabla u^*_n(u^*_n)^{\gamma-1}\\
&\leq\left(C+\|H\|_{L^\infty([s_0,s_1))}s_1^\gamma\right)\|\mu_d\|_{\mathcal{M}(\Omega)},
\end{align*}
i.e. that $u^*_n$ is bounded in $W^{1,p}_{loc}(\Omega)$. From now on, we can proceed as in the proof of Theorems \ref{teoexrinuniqueness} and \ref{teoexistence} in order to obtain Theorem \ref{teoexrinuniquenessfinal} for $\gamma>0$ and Theorem \ref{teoexistencefinal}.

\end{document}